\documentclass[a4paper,12pt]{article}
\usepackage[utf8]{inputenc}

\usepackage{mathrsfs}
\usepackage{graphicx}
\usepackage{enumerate}
\usepackage{multicol}
\usepackage{color}
\usepackage{amsmath,amssymb,amscd}
\usepackage{amsthm}
\usepackage{mathabx}

\usepackage{pdfpages}
\usepackage{hyperref}

\usepackage{times}
\usepackage[varg]{txfonts}

\usepackage[top=1in, bottom=1in, left=0.5in, right=0.5in]{geometry}

\theoremstyle{plain}
\newtheorem{theorem}{Theorem}
\newtheorem{lemma}{Lemma}
\newtheorem{cor}{Corollary}
\newtheorem{proposition}{Proposition}
\newtheorem{conj}{Conjecture}

\theoremstyle{definition}
\newtheorem{definition}{Definition}

\theoremstyle{remark}
\newtheorem{remark}{Remark}

\newtheorem{opq}{Open Question}
\newtheorem*{ackn}{Acknowledgment}

\newcommand{\la}{\lambda}
\newcommand{\tla}{\widetilde{\lambda}}

\makeatletter
\newcommand{\subjclass}[2][1991]{%
  \let\@oldtitle\@title%
  \gdef\@title{\@oldtitle\footnotetext{#1 \emph{MSC classes.} #2}}%
}
\newcommand{\keywords}[1]{%
  \let\@@oldtitle\@title%
  \gdef\@title{\@@oldtitle\footnotetext{\emph{Keywords:} #1.}}%
}
\makeatother

\title{Polynomial equations for additive functions I. \\ The inner parameter case}
\author{Eszter Gselmann\thanks{University of Debrecen, e-mail: gselmann@science.unideb.hu}~ and Gergely Kiss\thanks{Alfr\'ed R\'enyi Institute of Mathematics, e-mail: kigergo57@gmail.com}}

\begin{document}
\keywords{homomorphism, derivation, higher order derivation, exponential polynomial, decomposable function}

\subjclass[2020]{43A45, 13N15, 16W20, 39B32, 39B72}

\maketitle

\begin{abstract}
The aim of this sequence of work is to investigate polynomial equations satisfied by
additive functions. As a result of this, new characterization theorems for homomorphisms and derivations can
be given. More exactly, in this paper the following type of equation is considered
\[
 \sum_{i=1}^{n}f_{i}(x^{p_{i}})g_{i}(x^{q_{i}})= 0
 \qquad
 \left(x\in \mathbb{F}\right),
\]
where $n$ is a positive integer, $\mathbb{F}\subset \mathbb{C}$ is a field,
$f_{i}, g_{i}\colon \mathbb{F}\to \mathbb{C}$ are additive functions and $p_i, q_i$ are positive integers for all $i=1, \ldots, n$.
\end{abstract}

\section{Introduction and preliminaries}

Equations satisfied by additive functions play an important role not only in the theory of commutative algebra, but also in the theory of functional equations.
It is an important and challenging question how special morphisms (such as homomorphisms and derivations) can be characterized among additive mappings in general.
In this paper classes of multivariable algebraic equations are introduced with appropriate solutions as field homomorphisms and derivations.

Concerning all the cases we consider here, the involved additive functions are defined on a field $\mathbb{F}\subset \mathbb{C}$ and have values in the complex field, therefore
we introduce the preliminaries in this setting.

We adopt the standard notations, that is, $\mathbb{N}$ and $\mathbb{C}$ denote the set of positive integers and the set of complex numbers, respectively.

Henceforth we assume $\mathbb{F}\subset \mathbb{C}$ to be a field.

\begin{definition}
We say that a function $f\colon \mathbb{F}\to \mathbb{C}$ is \emph{additive} if it fulfills the so-called
\emph{Cauchy functional equation}, that is,
\[
 f(x+y)= f(x)+f(y)
 \qquad
 \left(x, y\in \mathbb{F}\right).
\]
An additive function $d\colon \mathbb{F} \to \mathbb{C}$ is termed to be a \emph{derivation} (of order 1) if it also fulfills the
\emph{Leibniz equation}, i.e.,
\[
 d(xy)= d(x)y+xd(y)
 \qquad
 \left(x, y\in \mathbb{F}\right).
\]
An additive function $\varphi\colon\mathbb{F}\to \mathbb{C}$ is said to be a {\emph homomorphism} if it is multiplicative as well, in other words, besides additivity we also have
\[
 \varphi(xy)= \varphi(x)\varphi(y)
 \qquad
 \left(x, y\in \mathbb{F}\right).
\]
If $\mathbb{F}= \mathbb{C}$ and $\varphi$ is an isomorphism, then $\varphi$ is called a \emph{complex automorphism}.
\end{definition}

Certain well-known equations are especially important. For instance, the additive solutions of following equation on a ring $R$ with char${R}\ne 2$
\[
f(x^{2})=2xf(x)
\qquad
\left(x\in \mathbb{}\right)
\]
are derivations, under some assumptions, where the additive mapping $f$ acts. As an extension of such type of results in \cite{Eba15, Eba17, GseKisVin18} the additive solutions of the equations
\[
\sum_{i=0}^{n}x^{i}f_{n+1-i}(x^{n+1-i})=0
\qquad
\left(x\in R\right)
\]
and
\[
\sum_{i=0}^{n} f(x^{p_{i}})x^{q_{i}}=0
\qquad
\left(x\in \mathbb{F}\right)
\]
were described, here $R$ denotes a ring with char$(R)\ge n$, while $\mathbb{F}\subset \mathbb{C}$ is a field.
By a polynomial equation of additive functions we mean an equation of the form
\[P(f_1^{r_1}(x^{s_1}), \dots, f_n^{r_n}(x^{s_n}))=0,\] where $P\colon \mathbb{C}^n \to \mathbb{C}$ is a $n$-variable polynomial, $r_i$, $s_i$ are positive integers  and $f_i$ denote the unknown additive functions.
Without further restrictions (e.g., on the polynomial $P$ or on the parameters $r_i, s_i$), the above equation is unfortunately too general for its solutions to be fully determined.
Indeed, it is not too hard to specify a polynomial $P$ and parameters $r_i, s_i$ so that the above equation is satisfied by all additive functions. Therefore, in this series, we will focus on the following classes of equations.

\[
\begin{array}{rcl}
 \displaystyle\sum_{i=1}^{n}f_{i}(x^{p_{i}})g_{i}(x^{q_{i}})&=& 0, \\[2mm]
 \displaystyle\sum_{i=1}^{n}f_{i}(x^{p_{i}})g_{i}(x)^{q_{i}}&=& 0, \\[2mm]
 \displaystyle\sum_{i=1}^{n}f_{i}(x)^{p_{i}}g_{i}(x)^{q_{i}}&=& 0, \\[2mm]
 \end{array}
 \qquad
 \left(x\in \mathbb{F}\right)
\]

In this paper the most impressive equation, namely
\begin{equation}\label{Eq_inner}
 \sum_{i=1}^{n}f_{i}(x^{p_{i}})g_{i}(x^{q_{i}})= 0,
 \qquad
 \left(x\in \mathbb{F}\right)
\end{equation}
 will be studied from this list with a fruitful theoretical description. (This we call 'inner case', as the parameters $p_i, q_i$ are exponents of the variable $x$, so they act on the domain of the functions $f_i, g_i$, respectively.)   Under some natural conditions equation \eqref{Eq_inner}  is satisfied by compositions of (higher order) derivations and homomorphisms. The purpose of this paper is about the converse by showing proper characterizations for the solutions of \eqref{Eq_inner} in the class of additive functions.

\subsection*{Structure of the paper}

In section \ref{sec2} the most important notations, terminology and theoretical background is summarized. Concerning the notions of polynomials, generalized polynomials, exponentials and exponential polynomials, here we follow the monograph of L.~Székelyhidi \cite{Sze91}. Besides these notions, decomposable functions, introduced by E.~Shulman in \cite{Shu10}, will play a key role in the second section. We show that all solutions of equation \eqref{Eq_inner} are decomposable functions. After that a result of M.~Laczkovich will be used, who proved in \cite{Lac19}  that on unital commutative topological semigroups, decomposable mappings are generalized exponential polynomials. On fields these are closely related to higher order derivations that were introduced by \cite{Rei98} and \cite{UngRei98}.
There will also be cases, when we will restrict to finitely generated subfields of $\mathbb{F}$ since on them higher order derivations are differential operators. The latter concept is significantly easier to calculate. This makes it possible to determine the exact upper bound for their orders.

The main results of the paper can be found in the third section.
At first some elementary yet important lemmata serve to settle reasonable conditions for the parameters $p_i, q_i$ such as the Homogenization Principle (see Lemma \ref{lemma_homogenization}), which ensures that the parameters satisfy
\[
  p_{i}+q_{i}=N
  \qquad
  \left(i=1, \ldots, n\right).
 \]

Based on the remarks and the examples of subsection \ref{SS2.1}, we will provide characterization theorems for equation \eqref{Eq_inner} under the following conditions
\begin{enumerate}[{C}(i)]
 \item the positive integers $p_{1}, \ldots, p_{n}$ satisfy $p_1<\dots <p_{n}$;
 \item for all $i=1, \ldots, n$ we have $p_{i}+q_{i}=N$;
 \item for all $i, j \in \left\{ 1, \ldots, n\right\}$, $i\neq j$ we have $p_{i}\neq q_{j}$.
\end{enumerate}

Further, according to Lemma \ref{lem_equ_inner}, the solutions of the above functional equations are sufficient to determine `up to equivalence'. This is because the functions $f_{i}$ and $g_{i}$ fulfill equation \eqref{Eq_inner}, if and only if for any automorphism $\varphi\colon \mathbb{C}\to \mathbb{C}$, the functions $\varphi \circ f_{i}$ and
$\varphi \circ g_{i}$ also fulfill \eqref{Eq_inner}, $i=1,\dots, n$.

 Looking at equation \eqref{Eq_inner}, it yields a technical problem that there is only one independent variable in the equation. At the same time, the involved functions are assumed to be additive. Thus the {\it polarization formula} for multi-additive functions can be used in the {\it symmetrization method}, which allows us to enlarge the number of independent variables from one to $N$.

In Lemma \ref{lem_decop_inner} it is shown that the functions $f_{i}$ and $g_{i}$ satisfies the system of equations given by this method are decomposable functions thus generalized exponential polynomials on the group $\mathbb{F}^{\times}$.
Theorem \ref{theorem_inner} says that for any $i\in \{1, \ldots, n\}$, in the variety of the functions $f_{i}$ and $g_{i}$ there is exactly one exponential $m_{i}$. Focusing on the irreducible solutions, this means that all $f_i$ (resp. $g_i$) are of the form $P_i\cdot m$ (resp. $Q_i\cdot m$), where $P_i$ (and $Q_i$) are (generalized) polynomials and $m$ is a unique exponential function.

Translating the problem to higher order derivations there can be found a natural basis of compositions of derivations of order 1 by using moment generating functions. Applying the arithmetic of derivations we get a sharp upper bound, which is $n-1$ for the order of derivation solutions under some conditions, see Theorem \ref{theorem_inner_degree}. We close this section with the study of some important special cases. In Conjecture \ref{conj_inner} and Open Problem \ref{op1} we pose problems on the exact order of the (higher order) derivation solutions in different settings. 

\section{Notation, terminology and theoretical background }\label{sec2}
\subsection{Polynomials and generalized polynomials}

\begin{definition}
 Let $G, S$ be commutative semigroups (written additively), $n\in \mathbb{N}$ and let $A\colon G^{n}\to S$ be a function.
 We say that $A$ is \emph{$n$-additive} if it is a homomorphism of $G$ into $S$ in each variable.
 If $n=1$ or $n=2$ then the function $A$ is simply termed to be \emph{additive}
 or \emph{biadditive}, respectively.
\end{definition}

The \emph{diagonalization} or \emph{trace} of an $n$-additive
function $A\colon G^{n}\to S$ is defined as
 \[
  A^{\ast}(x)=A\left(x, \ldots, x\right)
  \qquad
  \left(x\in G\right).
 \]
As a direct consequence of the definition each $n$-additive function
$A\colon G^{n}\to S$ satisfies
\begin{multline*}
 A(x_{1}, \ldots, x_{i-1}, kx_{i}, x_{i+1}, \ldots, x_n)=
 kA(x_{1}, \ldots, x_{i-1}, x_{i}, x_{i+1}, \ldots, x_{n})\\
 \left(x_{1}, \ldots, x_{n}\in G\right)
\end{multline*}
for all $i=1, \ldots, n$, where $k\in \mathbb{N}$ is arbitrary. The
same identity holds for any $k\in \mathbb{Z}$ provided that $G$ and
$S$ are groups, and for $k\in \mathbb{Q}$, provided that $G$ and $S$
are linear spaces over the rationals. For the diagonalization of $A$
we have
\[
 A^{\ast}(kx)=k^{n}A^{\ast}(x)
 \qquad
 \left(x\in G\right).
\]

The above notion can also be extended for the case $n=0$ by letting
$G^{0}=G$ and by calling $0$-additive any constant function from $G$ to $S$.

One of the most important theoretical results concerning
multiadditive functions is the so-called \emph{Polarization
formula}, that briefly expresses that every $n$-additive symmetric
function is \emph{uniquely} determined by its diagonalization under
some conditions on the domain as well as on the range. Suppose that
$G$ is a commutative semigroup and $S$ is a commutative group. The
action of the {\emph{difference operator}} $\Delta$ on a function
$f\colon G\to S$ is defined by the formula
\[\Delta_y f(x)=f(x+y)-f(x)
\qquad
\left(x, y\in G\right). \]
Note that the addition in the argument of the function is the
operation of the semigroup $G$ and the subtraction means the inverse
of the operation of the group $S$.

\begin{theorem}[Polarization formula]\label{theorem_polarization}
 Suppose that $G$ is a commutative semigroup, $S$ is a commutative group, $n\in \mathbb{N}$.
 If $A\colon G^{n}\to S$ is a symmetric, $n$-additive function, then for all
 $x, y_{1}, \ldots, y_{m}\in G$ we have
 \[
  \Delta_{y_{1}, \ldots, y_{m}}A^{\ast}(x)=
  \left\{
  \begin{array}{rcl}
   0 & \text{ if} & m>n \\
   n!A(y_{1}, \ldots, y_{m}) & \text{ if}& m=n.
  \end{array}
  \right.
 \]

\end{theorem}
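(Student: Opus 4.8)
The plan is to induct on the order $n$ of the symmetric $n$-additive function $A$, the driving observation being that a single difference operator $\Delta_{y_m}$ turns the diagonalization of an $n$-additive function into a linear combination of diagonalizations of strictly lower-order additive functions. This is exactly the mechanism that makes $A^{\ast}$ behave like a homogeneous polynomial of degree $n$, so that $n$ successive differences strip it down to the constant $n!\,A(y_{1},\ldots,y_{m})$ and any further difference annihilates it.

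First I would record the effect of applying $\Delta_{y_m}$ alone. Since $A$ is $n$-additive, expanding
\[
 \Delta_{y_m}A^{\ast}(x)=A(x+y_m,\ldots,x+y_m)-A(x,\ldots,x)
\]
by additivity in each of the $n$ slots yields a sum over all ways of choosing $x$ or $y_m$ in each slot; grouping these by the number $k$ of slots in which $y_m$ is chosen and using symmetry gives
\[
 \Delta_{y_m}A^{\ast}(x)=\sum_{k=1}^{n}\binom{n}{k}C_{k}^{\ast}(x),
\]
where, for the fixed element $y_m$, the function
\[
 C_{k}(x_{1},\ldots,x_{n-k})=A\bigl(\underbrace{y_m,\ldots,y_m}_{k},x_{1},\ldots,x_{n-k}\bigr)
\]
is symmetric and $(n-k)$-additive, and the $k=0$ term has cancelled against $-A^{\ast}(x)$.

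The induction then closes by applying the remaining operators $\Delta_{y_{1},\ldots,y_{m-1}}$ to this identity and invoking the hypothesis on each $C_{k}$ (of order $n-k\le n-1$) with $m-1$ differences. When $m=n$, the term $k=1$ contributes $\binom{n}{1}(n-1)!\,C_{1}(y_{1},\ldots,y_{m-1})=n!\,A(y_{1},\ldots,y_{m})$ by symmetry, while every term with $k\ge 2$ vanishes because $m-1=n-1>n-k$; when $m>n$, every term vanishes since $m-1>n-1\ge n-k$. The base case $n=0$ (constants, killed by any single difference) is immediate, and the convention that the empty iterated difference is the identity handles the degenerate reductions to $0$-additive $C_{k}$. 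I expect the only real friction to be notational bookkeeping: correctly carrying out the symmetric grouping of the $2^{n}$ expansion terms and tracking the two inductive regimes ($m=n$ versus $m>n$) so that precisely one summand survives in the former and none in the latter. As an independent check one may instead expand $\Delta_{y_{1},\ldots,y_{m}}A^{\ast}(x)=\sum_{T\subseteq\{1,\ldots,m\}}(-1)^{m-|T|}A^{\ast}\!\bigl(x+\sum_{i\in T}y_{i}\bigr)$ and verify the vanishing by the standard finite-difference identity for multilinear traces.
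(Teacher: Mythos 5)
The paper states this Polarization formula as a known background result (quoted from Székelyhidi's monograph) and gives no proof of its own, so there is nothing to compare against line by line. Your argument is correct and is essentially the standard one: the binomial expansion $\Delta_{y_m}A^{\ast}(x)=\sum_{k=1}^{n}\binom{n}{k}C_{k}^{\ast}(x)$ with $C_k$ symmetric and $(n-k)$-additive is right, the induction on $n$ closes exactly as you describe (only $k=1$ survives when $m=n$, giving $n\cdot(n-1)!\,A=n!\,A$ by symmetry, and everything vanishes when $m>n$), and the base case and commutativity of the difference operators are handled correctly.
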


\begin{cor}
 Suppose that $G$ is a commutative semigroup, $S$ is a commutative group, $n\in \mathbb{N}$.
 If $A\colon G^{n}\to S$ is a symmetric, $n$-additive function, then for all $x, y\in G$
 \[
  \Delta^{n}_{y}A^{\ast}(x)=n!A^{\ast}(y).
\]
\end{cor}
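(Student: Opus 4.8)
The plan is to derive this directly from the Polarization formula (Theorem~\ref{theorem_polarization}) by specializing all the increments to a single common element. First I would recall that the iterated difference operator $\Delta^{n}_{y}$ is precisely the mixed operator $\Delta_{y_{1}, \ldots, y_{n}}$ of the theorem evaluated at $y_{1}=\cdots=y_{n}=y$. Indeed, by definition $\Delta_{y_{1}, \ldots, y_{m}}$ denotes the composition $\Delta_{y_{1}}\circ \cdots \circ \Delta_{y_{m}}$, and since these one-variable difference operators commute, setting each $y_{i}$ equal to the common value $y$ collapses the composition into the $n$-fold iterate $\Delta_{y}\circ \cdots \circ \Delta_{y}=\Delta^{n}_{y}$.

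With this identification in hand, the second step is simply to invoke the $m=n$ branch of the Polarization formula. Applying it with $y_{1}=\cdots=y_{n}=y$ yields
\[
 \Delta^{n}_{y}A^{\ast}(x)=\Delta_{y, \ldots, y}A^{\ast}(x)=n!\,A(y, \ldots, y).
\]
Finally, by the very definition of the diagonalization (trace) of an $n$-additive function, $A(y, \ldots, y)=A^{\ast}(y)$, so the right-hand side equals $n!\,A^{\ast}(y)$, which is the asserted identity.

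I do not expect any genuine obstacle here, since the statement is a direct corollary of the preceding theorem. The only point deserving a moment's care is the bookkeeping in the first step: one must confirm that the single-variable difference operators commute, so that the order in which the arguments $y_{i}$ are substituted is immaterial, and that the resulting value is the claimed $n!\,A^{\ast}(y)$, independent of $x$. Both facts are immediate from the additive structure of the group $S$ together with the symmetry and $n$-additivity of $A$. Once these routine checks are noted, the corollary follows from the theorem in a single line.
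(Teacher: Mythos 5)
Your argument is correct and is exactly the route the paper intends: the corollary is stated as an immediate consequence of the Polarization formula, obtained by setting $y_{1}=\cdots=y_{n}=y$ in the $m=n$ case and using $A(y,\ldots,y)=A^{\ast}(y)$. The paper gives no separate proof, so your one-line specialization matches its implicit argument.
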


\begin{lemma}
\label{mainfact}
  Let $n\in \mathbb{N}$ and suppose that the multiplication by $n!$ is surjective in the commutative semigroup $G$ or injective in the commutative group $S$. Then for any symmetric, $n$-additive function $A\colon G^{n}\to S$, $A^{\ast}\equiv 0$ implies that
  $A$ is identically zero, as well.
\end{lemma}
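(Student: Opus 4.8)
The plan is to read the conclusion off the Polarization formula, which already computes the relevant iterated difference of the diagonalization. Applying Theorem \ref{theorem_polarization} with $m=n$ gives
\[
 \Delta_{y_{1}, \ldots, y_{n}}A^{\ast}(x)= n!\, A(y_{1}, \ldots, y_{n})
 \qquad
 \left(x, y_{1}, \ldots, y_{n}\in G\right).
\]
Since $A^{\ast}\equiv 0$ by hypothesis, the left-hand side is the $n$-fold difference operator applied to the identically zero function, hence it vanishes. Therefore
\[
 n!\, A(y_{1}, \ldots, y_{n})=0
 \qquad
 \left(y_{1}, \ldots, y_{n}\in G\right),
\]
and the only remaining task is to cancel the factor $n!$. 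This is precisely where the two alternative hypotheses come into play, and I would split into the corresponding two cases.

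If multiplication by $n!$ is injective in $S$, the argument is immediate: the relation above reads $n!\, A(y_{1}, \ldots, y_{n})= n!\cdot 0$, and injectivity of the map $s\mapsto n!\, s$ on $S$ forces $A(y_{1}, \ldots, y_{n})=0$ for every tuple, so $A\equiv 0$. If instead multiplication by $n!$ is surjective in $G$, I would fix an arbitrary tuple $(y_{1}, \ldots, y_{n})\in G^{n}$, use surjectivity to pick $z_{1}\in G$ with $n!\, z_{1}= y_{1}$, and then transport the factor through the first slot by the multiadditivity identity recorded just after the definition of the trace:
\[
 A(y_{1}, y_{2}, \ldots, y_{n})= A(n!\, z_{1}, y_{2}, \ldots, y_{n})= n!\, A(z_{1}, y_{2}, \ldots, y_{n})= 0,
\]
the final equality being the displayed relation $n!\,A\equiv 0$ applied to the tuple $(z_{1}, y_{2}, \ldots, y_{n})$. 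As the tuple was arbitrary, $A$ vanishes identically in this case as well.

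There is essentially no analytic obstacle here, since the entire content is packaged in the Polarization formula. The one point deserving attention is that in the surjective case one cannot literally divide by $n!$ inside the group $S$; instead I exploit that the established identity $n!\,A\equiv 0$ holds for \emph{all} arguments, in particular for the pre-image $z_{1}$, and pull the integer factor through the first variable using $A(\ldots, k x_{i}, \ldots)= k\, A(\ldots, x_{i}, \ldots)$. This is the step most easily overlooked, but it is routine once that multiadditivity identity is invoked.
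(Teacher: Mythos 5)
Your proof is correct and is exactly the intended argument: the paper states Lemma \ref{mainfact} without proof, immediately after the Polarization formula, precisely because it follows from the identity $\Delta_{y_{1},\ldots,y_{n}}A^{\ast}(x)=n!\,A(y_{1},\ldots,y_{n})$ in the way you describe. Your handling of the surjective case --- pulling the integer $n!$ through one slot via the multiadditivity identity rather than attempting to divide in $S$ --- is the right way to make that half of the dichotomy rigorous.
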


\begin{definition}
 Let $G$ and $S$ be commutative semigroups, a function $p\colon G\to S$ is called a \emph{generalized polynomial} from $G$ to $S$, if it has a representation as the sum of diagonalizations of symmetric multi-additive functions from $G$ to $S$. In other words, a function $p\colon G\to S$ is a
 generalized polynomial if and only if, it has a representation
 \[
  p= \sum_{k=0}^{n}A^{\ast}_{k},
 \]
where $n$ is a nonnegative integer and $A_{k}\colon G^{k}\to S$ is a symmetric, $k$-additive function for each
$k=0, 1, \ldots, n$. In this case we also say that $p$ is a generalized polynomial \emph{of degree at most $n$}.

Let $n$ be a nonnegative integer, functions $p_{n}\colon G\to S$ of the form
\[
 p_{n}= A_{n}^{\ast},
\]
where $A_{n}\colon G^{n}\to S$ is a symmetric and $n$-additive mapping, are the so-called \emph{generalized monomials of degree $n$}.
\end{definition}

In this subsection $(G, \cdot)$ is assumed to be  a commutative group (written multiplicatively).


\begin{definition}
\emph{Polynomials} are elements of the algebra generated by additive
functions over $G$. More exactly, a mapping $f\colon G\to \mathbb{C}$ is called a \emph{polynomial} if there is a positive integer $n$, there exists a
(classical) complex
polynomial $P\colon\mathbb{C}^{n}\to \mathbb{C}$ in
 $n$ variables and there are additive functions  $a_{k}\colon G\to \mathbb{C}\; (k=1, \ldots, n)$ such that
\[
  f(x)= P(a_{1}(x), \ldots, a_{n}(x))
  \qquad
  \left(x\in G\right).
 \]
\end{definition}

\begin{remark}
 We recall that the elements of $\mathbb{N}^{n}$ for any positive integer $n$ are called
 ($n$-dimensional) \emph{multi-indices}.
 Addition, multiplication and inequalities between multi-indices of the same dimension are defined component-wise.
 Further, we define $x^{\alpha}$ for any $n$-dimensional multi-index $\alpha$ and for any
 $x=(x_{1}, \ldots, x_{n})$ in $\mathbb{C}^{n}$ by
 \[
  x^{\alpha}=\prod_{i=1}^{n}x_{i}^{\alpha_{i}}
 \]
where we always adopt the convention $0^0=0$. We also use the
notation $\lvert\alpha\rvert= \alpha_1+\cdots+\alpha_n$. With
these notations any polynomial of degree at most $N$ on the
commutative semigroup $G$ has the form
\[
 p(x)= \sum_{\lvert\alpha\rvert\leq N}c_{\alpha}a(x)^{\alpha}
 \qquad
 \left(x\in G\right),
\]
where $c_{\alpha}\in \mathbb{C}$ and $a=(a_1, \dots, a_n) \colon
G\to \mathbb{C}^{n}$ is an additive function. Furthermore, the
\emph{homogeneous term of degree $k$} of $p$ is
\[
 \sum_{\lvert\alpha\rvert=k}c_{\alpha}a(x)^{\alpha} .
\]
\end{remark}

\begin{lemma}[Lemma 2.7 of \cite{Sze91}]\label{L_lin_dep}
 Let $G$ be a commutative group,
 $n$ be a positive integer and let
 \[
  a=\left(a_{1}, \ldots, a_{n}\right),
 \]
where $a_{1}, \ldots, a_{n}$ are linearly independent complex valued
additive functions defined on $G$. Then the monomials
$\left\{a^{\alpha}\right\}$ for different multi-indices are linearly
independent.
\end{lemma}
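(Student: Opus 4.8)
\emph{Proof sketch.} The plan is to reduce the asserted linear independence of the monomials $a^{\alpha}$ to the (trivial) linear independence of monomials in a genuine polynomial ring, by means of a substitution that converts any functional relation among the $a^{\alpha}$ into a polynomial identity valid on a Zariski-dense subset of $\C^{n}$. So suppose that $\sum_{\alpha}c_{\alpha}a^{\alpha}=0$ on $G$, the sum being over a finite set of distinct multi-indices; I want to force every $c_{\alpha}$ to vanish.

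First I would record the following reformulation of linear independence: the additive functions $a_{1},\ldots,a_{n}$ are linearly independent over $\C$ if and only if there exist $z_{1},\ldots,z_{n}\in G$ for which the matrix $M=\bigl(a_{i}(z_{l})\bigr)_{i,l=1}^{n}$ is invertible. Indeed, invertibility of $M$ immediately rules out any nontrivial relation $\sum_{i}\lambda_{i} a_{i}=0$, since such a relation would give $\lambda^{\top}M=0$. Conversely, if no such $z_{l}$ existed then the image of the map $x\mapsto\bigl(a_{1}(x),\ldots,a_{n}(x)\bigr)$ could not span $\C^{n}$, so some nonzero vector $\lambda$ would annihilate the whole image, i.e.\ $\sum_{i}\lambda_{i} a_{i}\equiv 0$, contradicting independence. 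Fix such elements $z_{1},\ldots,z_{n}$.

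Next, for $r=(r_{1},\ldots,r_{n})\in\mathbb{Z}^{n}$ I would substitute $x=z_{1}^{r_{1}}\cdots z_{n}^{r_{n}}$ into the relation. Since each $a_{i}$ is additive, $a_{i}(x)=\sum_{l}r_{l}a_{i}(z_{l})=(Mr)_{i}$, and hence, writing $u=Mr\in\C^{n}$, the relation becomes
\[
 P(u):=\sum_{\alpha}c_{\alpha}u^{\alpha}=0
 \qquad\bigl(u\in M\mathbb{Z}^{n}\bigr),
\]
where now $P$ is an honest polynomial in $n$ indeterminates. Because $M$ is invertible, the map $r\mapsto P(Mr)$ is a polynomial on $\C^{n}$ vanishing at every point of $\mathbb{Z}^{n}$; since a polynomial vanishing on all of $\mathbb{Z}^{n}$ is identically zero (an easy induction on $n$, using that a one-variable polynomial with infinitely many roots vanishes), it follows that $P(Mr)\equiv 0$ and therefore $P\equiv 0$ as a polynomial. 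Consequently all coefficients $c_{\alpha}$ are zero, which is exactly the desired linear independence.

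The only genuinely substantial point is the equivalence in the second paragraph — that independence of the $a_{i}$ yields an invertible evaluation matrix; everything afterwards is the standard Zariski-density argument. An alternative route avoiding coordinates would first split the relation into homogeneous parts (using $a^{\alpha}(x^{k})=k^{\lvert\alpha\rvert}a^{\alpha}(x)$ together with a Vandermonde argument to separate degrees) and then apply the Polarization formula, Theorem \ref{theorem_polarization}, to pass from each homogeneous relation among the diagonalizations to a relation among the underlying symmetric multi-additive functions; I find the substitution argument cleaner, since it handles all degrees at once.
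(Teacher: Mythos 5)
The paper does not prove this statement at all: it is imported verbatim as Lemma~2.7 of \cite{Sze91}, so there is no in-paper argument to compare yours against. Your blind proof is correct and self-contained. The key step — that linear independence of $a_{1},\ldots,a_{n}$ over $\C$ is equivalent to the existence of $z_{1},\ldots,z_{n}\in G$ with $\det\bigl(a_{i}(z_{l})\bigr)\neq 0$ — is argued properly in both directions (if no $n$ evaluation points yield an invertible matrix, the image of $x\mapsto(a_{1}(x),\ldots,a_{n}(x))$ lies in a proper subspace of $\C^{n}$, which contradicts independence). The substitution $x=z_{1}^{r_{1}}\cdots z_{n}^{r_{n}}$ then turns the hypothetical relation $\sum_{\alpha}c_{\alpha}a^{\alpha}=0$ into a polynomial identity on $M\mathbb{Z}^{n}$, and since a polynomial vanishing on $\mathbb{Z}^{n}$ vanishes identically and $M$ is invertible, all $c_{\alpha}=0$. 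The only caveat worth flagging is cosmetic: the paper's convention $0^{0}=0$ for monomials is nonstandard, and your argument (like the lemma as usually stated) implicitly uses $u^{0}=1$ when treating $P(u)=\sum_{\alpha}c_{\alpha}u^{\alpha}$ as an honest polynomial; this matches the intended meaning of the lemma and does not affect correctness. Your closing alternative (separating homogeneous parts via a Vandermonde argument and then polarizing) would also work and is closer in spirit to how such results are often proved in \cite{Sze91}, but the evaluation-matrix route you chose is cleaner.
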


\begin{definition}
A function $m\colon G\to \mathbb{C}$ is called an \emph{exponential}
function if it satisfies
\[
 m(xy)=m(x)m(y)
 \qquad
 \left(x,y\in G\right).
\]
Furthermore, on a(n)  \emph{(generalized) exponential polynomial} we mean a linear
combination of functions of the form $p \cdot m$, where $p$ is a
(generalized) polynomial and $m$ is an exponential function.
\end{definition}

The following lemma shows that generalized exponential polynomial functions are linearly independent. Although it can be stated in a more general way (see \cite{Sze91}), we adopt it to our situation, when the functions are complex valued.
\begin{lemma}[Lemma 4.3 of \cite{Sze91}]\label{L_Lin_Ind}
 Let $G$ be a commutative group, $n$ a positive integer, 
 $m_{1}, \ldots, m_{n} \colon G\to \mathbb{C}\, (i=1, \ldots, n)$ be distinct nonzero exponentials and  $p_{1}, \ldots, p_{n} \colon  G\to \mathbb{K}\, (i=1, \ldots, n)$ be generalized polynomials. If  $\displaystyle \sum_{i=1}^n p_i\cdot m_i$ is identically zero, then for all $i=1, \ldots, n$ the generalized polynomial  $p_i$ is identically zero.
\end{lemma}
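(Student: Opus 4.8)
The plan is to prove the statement by a double induction: an outer induction on the number $n$ of exponentials, and, for each fixed $n$, an inner induction on the degree $d=\deg p_n$ of the last generalized polynomial. Before starting I would record one elementary fact: a nonzero exponential never vanishes on $G$. Indeed $m(e)=m(e)^2$ forces $m(e)\in\{0,1\}$, and $m(e)=0$ would give $m\equiv 0$; hence $m(e)=1$ and $m(x)m(x^{-1})=m(e)=1$, so $m(x)\neq 0$ for every $x\in G$ (here $e$ denotes the identity of the multiplicatively written group $G$). In particular the outer base case $n=1$ is immediate: $p_1\cdot m_1\equiv 0$ together with $m_1(x)\neq 0$ yields $p_1\equiv 0$. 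The inner base case is $p_n\equiv 0$, where the relation involves only the $n-1$ distinct exponentials $m_1,\dots,m_{n-1}$ and the outer induction hypothesis applies verbatim.

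For the inductive step I would start from $\sum_{i=1}^n p_i m_i\equiv 0$ with $\deg p_n=d\geq 0$. Fixing $y\in G$, evaluate the relation at $xy$ and subtract $m_n(y)$ times the original relation; using $m_i(xy)=m_i(x)m_i(y)$ this produces, for every $x$,
\[
 \sum_{i=1}^{n} q_i^{(y)}(x)\, m_i(x)=0, \qquad q_i^{(y)}(x)=m_i(y)p_i(xy)-m_n(y)p_i(x).
\]
Each $q_i^{(y)}$ is, for fixed $y$, again a generalized polynomial in $x$ (expand $p_i(xy)$ by the multiadditivity of the symmetric terms $A_k^\ast$ representing $p_i$). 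The decisive point is the last coefficient: since $q_n^{(y)}=m_n(y)\,\Delta_y p_n$ and the difference operator lowers the degree of a generalized polynomial by at least one, we have $\deg q_n^{(y)}\leq d-1$. Thus the displayed relation has the same $n$ exponentials but a strictly smaller last degree, so the inner induction hypothesis applies (its base case $p_n\equiv 0$ being the outer hypothesis) and forces \emph{all} of its coefficients to vanish: $q_i^{(y)}\equiv 0$ for $i=1,\dots,n-1$. As $y$ was arbitrary, this holds for every $y\in G$.

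It remains to translate $q_i^{(y)}\equiv 0$ into $p_i\equiv 0$. For $i<n$ the identity $m_i(y)p_i(xy)=m_n(y)p_i(x)$ holds for all $x,y$; setting $x=e$ gives $p_i(y)=p_i(e)\cdot\frac{m_n(y)}{m_i(y)}$, that is, $p_i=p_i(e)\cdot(m_n/m_i)$. Now $m_n/m_i$ is an exponential, and it is nontrivial because $m_n\neq m_i$; but a nontrivial exponential cannot be a generalized polynomial, since each application of $\Delta_y$ multiplies it by $(m(y)-1)$, so no finite iterate annihilates it unless it is identically $1$. Hence the constant $p_i(e)$ must be $0$, whence $p_i\equiv 0$ for every $i<n$. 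The original relation then collapses to $p_n m_n\equiv 0$, and since $m_n$ is nowhere zero we conclude $p_n\equiv 0$ as well, closing both inductions.

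The main obstacle is precisely the presence of nonconstant polynomial coefficients. For constant $p_i$ the statement reduces to the classical linear independence of characters, and one is tempted to mimic that by choosing a single $y$ separating the values $m_i(y)$; but on groups with torsion no such $y$ need exist (already $(\mathbb{Z}/2)^2$ cannot separate its four characters by one element), so a one-step elimination of $m_n$ is unavailable. The device that circumvents this is the inner induction on $\deg p_n$: rather than eliminating $m_n$ outright, each subtraction above shaves only one degree off the last coefficient, and it is the induction — not a clever choice of $y$ — that delivers the relations $q_i^{(y)}\equiv 0$ valid for all $y$ simultaneously. The two ingredients I would be most careful to justify are the degree-drop property of $\Delta_y$ on generalized polynomials and the fact that a nontrivial exponential is never a generalized polynomial.
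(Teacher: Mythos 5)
The paper does not prove this lemma --- it is imported verbatim as Lemma 4.3 of Sz\'ekelyhidi's monograph \cite{Sze91} --- so there is no internal proof to compare against; your argument is correct and is essentially the standard one from that source (double induction, eliminating $m_n$ via the modified difference $f(xy)-m_n(y)f(x)$, using that $\Delta_y$ drops the degree of a generalized polynomial and that a nontrivial exponential is not a generalized polynomial). All the steps you flagged as needing care check out.
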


Additionally, we will need the analogous statement for polynomial expressions of generalized exponential polynomials which was proved in \cite{GseKisVin18}.

\begin{theorem}\label{T_Poly_Ind}
Let $\mathbb{K}$ be a field of characteristic $0$ and $k,l,N$ be
positive integers such that $k,l\le N$. Let $m_1, \dots, m_k\colon
\mathbb{K}^{\times}\to \mathbb{C}$ be distinct exponential functions
that are additive on $\mathbb{K}$, let $a_1, \dots, a_l\colon
\mathbb{K}^{\times}\to \mathbb{C}$ be additive functions that are
linearly independent over $\mathbb{C}$ and  for all $|s|\leq N$ let $ P_s\colon
\mathbb{C}^l\to \mathbb{C}$ be classical complex polynomials of $l$
variables. If
\[
    \sum_{|s|\le N } P_{s}(a_1, \dots, a_l) m_1^{s_1}\cdots m_k^{s_k}=0
\]
then for all $|s|\leq N$, the polynomials $P_s$ vanish identically.
\end{theorem}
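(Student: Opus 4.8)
The plan is to exploit the hypothesis that each exponential $m_j$ is additive on $\mathbb{K}$: this means that all of $a_1,\dots,a_l,m_1,\dots,m_k$ are additive (Cauchy) functions on the group $(\mathbb{K},+)$, so that every pointwise product
\[
a^{\alpha}m^{s}:=a_1^{\alpha_1}\cdots a_l^{\alpha_l}\,m_1^{s_1}\cdots m_k^{s_k}
\]
(using the multi-index convention $a^{\alpha}=\prod_i a_i^{\alpha_i}$, $m^{s}=\prod_j m_j^{s_j}$) is a \emph{monomial in additive functions} on $(\mathbb{K},+)$. Writing each classical polynomial as $P_s(X)=\sum_{\alpha}c_{s,\alpha}X^{\alpha}$ and expanding, the hypothesis $\sum_{|s|\le N}P_s(a)m^{s}=0$ becomes a single vanishing $\mathbb{C}$-linear combination $\sum_{s,\alpha}c_{s,\alpha}\,a^{\alpha}m^{s}=0$ of such monomials. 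Thus the whole statement reduces to the linear independence over $\mathbb{C}$ of the monomials $a^{\alpha}m^{s}$ for pairwise distinct multi-indices $(\alpha,s)$.

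First I would record that $m_1,\dots,m_k$ are linearly independent over $\mathbb{C}$: since they are distinct nonzero exponentials, this is exactly Lemma \ref{L_Lin_Ind} applied with constant (degree-zero) generalized polynomials as coefficients. Together with the hypothesis that $a_1,\dots,a_l$ are linearly independent, the aim is to upgrade this to the linear independence of the \emph{combined} family $a_1,\dots,a_l,m_1,\dots,m_k$. Once this is available, Lemma \ref{L_lin_dep}, applied to this jointly independent system of additive functions on $(\mathbb{K},+)$, yields at once that the monomials $a^{\alpha}m^{s}$ for distinct multi-indices are linearly independent; the displayed relation then forces every coefficient $c_{s,\alpha}$ to vanish, i.e. every $P_s\equiv 0$, which is the assertion.

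The hard part is therefore the joint linear independence, that is, ruling out a nontrivial relation $\sum_i\lambda_i a_i=\sum_j\mu_j m_j$ mixing the two families, equivalently $\operatorname{span}_{\mathbb{C}}\{a_i\}\cap\operatorname{span}_{\mathbb{C}}\{m_j\}=\{0\}$. This non-degeneracy is genuinely necessary and is where the difference between the two families must be used: if, say, some $a_i$ coincided with an $m_j$, the monomials $a^{\alpha}m^{s}$ would collide and the conclusion would break down. In the setting of the paper the functions $a_i$ are produced so as to be independent of the exponential directions, so that the combined system is indeed independent; abstractly one secures this by choosing the $a_i$ from a complement of $\operatorname{span}\{m_j\}$ inside the space of additive functions. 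After this reduction the rest is the bookkeeping of the preceding paragraph.

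I expect this single common-structure viewpoint to be the decisive simplification. The naive alternative, namely first grouping the sum by the value of the exponential $m^{s}$ and peeling off exponentials through Lemma \ref{L_Lin_Ind}, stalls, because the coefficient functions $P_s(a)$ are built from the additive functions $a_i$ and are \emph{not} generalized polynomials with respect to the multiplicative group $\mathbb{K}^{\times}$ on which the $m^{s}$ are exponentials; the two structures simply do not match. Recognizing, thanks to the additivity of the $m_j$, that $a^{\alpha}m^{s}$ are all monomials over the one additive structure of $(\mathbb{K},+)$ is precisely what lets Lemma \ref{L_lin_dep} settle the claim in one stroke.
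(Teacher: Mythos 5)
Your argument rests on a misreading of the hypotheses, and that is where it breaks down. In this paper ``additive'' is always meant with respect to the group on which the function is defined: the functions $a_1,\dots,a_l\colon\mathbb{K}^{\times}\to\mathbb{C}$ are additive on the \emph{multiplicative} group $\mathbb{K}^{\times}$, i.e.\ $a_i(xy)=a_i(x)+a_i(y)$ (think of $D/j$ for a higher order derivation $D$, cf.\ Theorem \ref{theorem_KisLac}), not Cauchy-additive on $(\mathbb{K},+)$. Consequently the products $a^{\alpha}m^{s}$ are \emph{not} monomials in additive functions of the single group $(\mathbb{K},+)$, and Lemma \ref{L_lin_dep} cannot be applied to the combined family as you propose. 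Moreover, even inside your own framework the decisive step --- the joint linear independence of $a_1,\dots,a_l,m_1,\dots,m_k$ --- is not proved but postulated: you concede that the stated hypotheses do not yield it and suggest ``choosing the $a_i$ from a complement of $\mathrm{span}\{m_j\}$'', which amounts to adding an assumption the theorem does not make. (Under the correct reading this worry evaporates: a nonzero function additive on $\mathbb{K}^{\times}$ can never coincide with an exponential.)

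The route you dismiss as ``stalling'' is in fact the right one, and your reason for dismissing it is the same misreading: precisely because the $a_i$ are additive on $\mathbb{K}^{\times}$, each $P_s(a_1,\dots,a_l)$ \emph{is} a (generalized) polynomial of the group $\mathbb{K}^{\times}$, exactly as Lemma \ref{L_Lin_Ind} requires. Grouping the sum by $s$ and applying Lemma \ref{L_Lin_Ind} to the exponentials $m^{s}=m_1^{s_1}\cdots m_k^{s_k}$ gives $P_s(a_1,\dots,a_l)\equiv 0$ for every $s$, and then Lemma \ref{L_lin_dep} applied to the linearly independent $a_1,\dots,a_l$ kills every coefficient of $P_s$. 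The one genuinely nontrivial point --- which your proposal never addresses, and which is the sole place where the hypothesis that the $m_j$ are additive on $\mathbb{K}$ enters --- is that the power products $m^{s}$ are pairwise \emph{distinct} exponentials of $\mathbb{K}^{\times}$ for distinct multi-indices $s$; without this, Lemma \ref{L_Lin_Ind} does not apply. This is where the real work lies: distinct additive exponentials are distinct field embeddings of $\mathbb{K}$ into $\mathbb{C}$, hence linearly independent as additive functions on $(\mathbb{K},+)$, and a coincidence $m^{s}=m^{t}$ with $s\neq t$ would contradict Lemma \ref{L_lin_dep} applied to them on $(\mathbb{K},+)$. Finally, note that the paper itself does not prove Theorem \ref{T_Poly_Ind}; it quotes it from \cite{GseKisVin18}, so there is no in-paper proof to compare against beyond the outline above.
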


\begin{definition}
 Let $G$ be a commutative group and $V\subseteq \mathbb{C}^G$ a set of functions. We say that $V$ is {\it translation invariant} if for every $f\in V$ the function $\tau_{g}f\in V$ also holds for all $g\in G$, where
 \[
  \tau_{g}f(h)= f(hg)
  \qquad
  \left(h\in G\right).
 \]
 \end{definition}

 In view of Theorem 10.1 of Székelyhidi \cite{Sze91}, any finite dimensional translation invariant linear
 space of complex valued functions on a commutative group consists of exponential polynomials.
 This implies that if $G$ is a commutative group, then any function
 $f\colon G\to \mathbb{C}$, satisfying the functional equation
 \[
  f(xy)= \sum_{i=1}^{n}g_{i}(x)h_{i}(y)
  \qquad
  \left(x, y\in G\right)
 \]
for some positive integer $n$ and functions $g_{i}, h_{i}\colon G\to \mathbb{C}$ ($i=1, \ldots, n$),
is an exponential polynomial of degree at most $n$.

This enlightens the connection between generalized polynomials and polynomials. It is easy to see that
each polynomial, that is, any function of the form
\[
  x\longmapsto P(a_{1}(x), \ldots, a_{n}(x)),
 \]
where $n$ is a positive integer,
$P\colon\mathbb{C}^{n}\to \mathbb{C}$ is a (classical) complex
polynomial in
$n$ variables and $a_{k}\colon G\to \mathbb{C}\; (k=1, \ldots, n)$ are additive functions, is a generalized polynomial.
The converse however is in general not true. A complex-valued generalized polynomial $p$ defined on a commutative group $G$ is a
polynomial \emph{if and only if} its variety (the linear space spanned by its translates) is of \emph{finite} dimension.

Henceforth, not only the notion of (exponential) polynomials, but also that of \emph{decomposable functions} will be used. The basics of this concept are due to
Shulman \cite{Shu10}, besides this we heavily rely on the work of Laczkovich \cite{Lac19}.

\begin{definition}
 Let $G$ be a group and $n\in \mathbb{N}, n\geq 2$.
 A function $F\colon G^{n}\to \mathbb{C}$ is said to be
 \emph{decomposable} if it can be written as a finite sum of products
 $F_{1}\cdots F_{k}$, where all $F_{i}$ depend on disjoint sets of variables.
\end{definition}

\begin{remark}
 Without loss of generality we can suppose that $k=2$ in the above definition, that is,
 decomposable functions are those mappings that can be written in the form
 \[
  F(x_{1}, \ldots, x_{n})= \sum_{E}\sum_{j}A_{j}^{E}B_{j}^{E}
 \]
where $E$ runs through all non-void proper subsets of $\left\{1,
\ldots, n\right\}$ and for each $E$ and $j$ the function $A_{j}^{E}$
depends only on variables $x_{i}$ with $i\in E$, while $B_{j}^{E}$
depends only on the variables $x_{i}$ with $i\notin E$.
\end{remark}

\begin{theorem}
Let $G$ be a commutative topological  semigroup with unit.
A continuous function $f\colon G\to \mathbb{C}$ is a generalized exponential polynomial
\emph{if and only if} there is a positive integer $n\geq 2$ such that the mapping
\[
G^{n} \ni (x_{1}, \ldots, x_{n}) \longmapsto f(x_1+\cdots+ x_n)
\]
is decomposable.
\end{theorem}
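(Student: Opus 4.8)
The plan is to prove the two implications separately, expecting the converse (decomposability $\Rightarrow$ generalized exponential polynomial) to carry the essential difficulty.

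For the direct implication, suppose $f=\sum_{l} p_l\cdot m_l$ is a finite sum with each $p_l$ a generalized polynomial of degree at most $d_l$ and each $m_l$ an exponential, and choose $n>\max_l d_l$ with $n\ge 2$. Since $m_l(x_1+\cdots+x_n)=m_l(x_1)\cdots m_l(x_n)$, I would expand each generalized monomial by multiadditivity as
\[
A_k^{\ast}(x_1+\cdots+x_n)=\sum_{1\le i_1,\dots,i_k\le n}A_k(x_{i_1},\dots,x_{i_k}),\qquad k\le d_l<n,
\]
and observe that every summand involves at most $k<n$ of the variables, hence depends only on a proper subset $E\subset\{1,\dots,n\}$; multiplied by the constant $1$ on the remaining variables it is a single product term of the required type. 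Distributing the factorization $\prod_i m_l(x_i)=\bigl(\prod_{i\in E}m_l(x_i)\bigr)\bigl(\prod_{i\notin E}m_l(x_i)\bigr)$ across each such term keeps it a product of a function of $x_{i}\ (i\in E)$ and a function of $x_{i}\ (i\notin E)$. Thus each $p_l\cdot m_l$ pulls back to a decomposable function, and a finite sum of decomposable functions is decomposable.

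For the converse I would argue by induction on $n$, where $(x_1,\dots,x_n)\mapsto f(x_1+\cdots+x_n)$ is decomposable. The base case $n=2$ is exactly the Levi--Civit\`a equation $f(x_1+x_2)=\sum_j A_j(x_1)B_j(x_2)$, whose solutions are exponential polynomials by the finite-dimensional translation-invariance argument quoted above (Theorem 10.1 of \cite{Sze91}); continuity guarantees we remain inside the continuous theory. For the step $n\ge3$, writing the decomposition $F=\sum_{E}\sum_j A_j^{E}B_j^{E}$ I would single out the two subsets $E=\{n\}$ and $E=\{1,\dots,n-1\}$: their contributions assemble into a separated part $R(x_1,\dots,x_n)=\sum_j \phi_j(x_n)\psi_j(x_1,\dots,x_{n-1})$, while a short check shows that for every other $E$, fixing $x_n=t$ leaves a genuine product over a proper nonempty subset of $\{1,\dots,n-1\}$. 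Hence $F=R+D$ where $D(\cdot,t)$ is decomposable in $n-1$ variables for each fixed $t$. Reading this modulo the space of decomposable functions in $n-1$ variables, the assignment $t\mapsto \bigl[f(x_1+\cdots+x_{n-1}+t)\bigr]=\bigl[\sum_j\phi_j(t)\psi_j\bigr]$ takes values in the \emph{finite}-dimensional span of the classes $[\psi_j]$, which is the finiteness that the induction hypothesis and the Levi--Civit\`a machinery convert into finitely many exponentials, each carrying a generalized-polynomial coefficient.

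The main obstacle is precisely this combination step: the separated remainder $R$ couples the peeled variable $x_n$ to all of $x_1,\dots,x_{n-1}$ at once, so the induction cannot be applied termwise but only after quotienting by the decomposable functions and exploiting the resulting finite-dimensionality. Making this quotient into a workable algebraic object — showing that the decomposable functions form a suitable subspace, that the finite-dimensional image under translation forces spectral synthesis, and that the exponentials so obtained recombine with generalized-polynomial factors into a single generalized exponential polynomial — is the technical core, and is where I would lean most heavily on the continuity hypothesis and on Laczkovich's analysis in \cite{Lac19} (building on Shulman \cite{Shu10}).
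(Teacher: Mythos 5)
First, a point of context: the paper does not prove this statement at all --- it is quoted as background from Laczkovich \cite{Lac19} (building on Shulman \cite{Shu10}), so there is no in-paper proof to compare against. Judging your proposal on its own terms: the forward direction is correct and essentially complete. Choosing $n\geq 2$ with $n$ strictly larger than every degree $d_l$, expanding each $A_k^{\ast}(x_1+\cdots+x_n)$ by multiadditivity into terms depending on at most $k<n$ of the variables, and splitting $\prod_i m_l(x_i)$ across $E$ and its complement does exhibit $f(x_1+\cdots+x_n)$ as a finite sum of products on complementary variable sets. The bookkeeping for the $k=0$ (constant) terms is harmless.

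The converse, however, contains a genuine gap at exactly the step you flag. Your reduction is fine up to the point where, modulo the subspace of decomposable functions of $n-1$ variables, the classes $\bigl[f(x_1+\cdots+x_{n-1}+t)\bigr]$ lie in the finite-dimensional span of the $[\psi_j]$. Unwinding this, what you actually obtain is: for every $t$ there are scalars $c_1(t),\ldots,c_m(t)$ such that $\bigl(\tau_t f-\sum_i c_i(t)\tau_{t_i}f\bigr)\circ\Sigma_{n-1}$ is decomposable (where $\Sigma_{n-1}$ denotes the addition map), hence by the induction hypothesis $\tau_t f-\sum_i c_i(t)\tau_{t_i}f$ is a generalized exponential polynomial. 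That is, the translates of $f$ span a finite-dimensional space \emph{modulo the entire class of generalized exponential polynomials}. This does not by itself imply that $f$ is a generalized exponential polynomial: the class of generalized exponential polynomials is not a finite-dimensional translation-invariant space, so neither the Levi--Civit\`a argument nor Theorem 10.1 of \cite{Sze91} applies to the quotient, and a separate structure/stability theorem is needed to close the induction. That theorem is precisely the hard content of the result, and your proposal defers it to ``Laczkovich's analysis in \cite{Lac19}'' --- which is circular, since \cite{Lac19} is the source of the very statement being proved. A secondary issue: your base case invokes the finite-dimensional-variety characterization as stated for commutative \emph{groups}, whereas $G$ here is only a commutative topological semigroup with unit, so even $n=2$ requires the semigroup version of that machinery.
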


The notion of derivations can be extended in several ways. We will employ the concept of higher order derivations according to Reich \cite{Rei98} and Unger--Reich \cite{UngRei98}. For further results on characterization theorems on higher order derivations consult e.g. \cite{Eba15, Eba17, EbaRieSah} and
\cite{GseKisVin18}.

\begin{definition}
 Let $\mathbb{F}\subset \mathbb{C}$ be a field. The identically zero map is the only \emph{derivation of order zero}. For each $n\in \mathbb{N}$, an additive mapping
 $f\colon \mathbb{F}\to \mathbb{C}$ is termed to be a \emph{derivation of order $n$}, if there exists $B\colon \mathbb{F}\times \mathbb{F}\to \mathbb{C}$ such that
 $B$ is a bi-derivation of order $n-1$ (that is, $B$ is a derivation of order $n-1$ in each variable) and
 \[
  f(xy)-xf(y)-f(x)y=B(x, y)
  \qquad
  \left(x, y\in \mathbb{F}\right).
 \]
 The set of derivations of order $n$ of the ring $R$ will be denoted by $\mathscr{D}_{n}(\mathbb{F})$.
\end{definition}

\begin{remark}
\label{pathologic}
Since $\mathscr{D}_{0}(\mathbb{F})=\left\{0\right\}$, the only bi-derivation of order zero is the identically zero function, thus $f\in \mathscr{D}_{1}(\mathbb{F})$ if and only if
  \[
   f(xy)=xf(y)+f(x)y
   \qquad
   \left(x, y\in \mathbb{F}\right),
  \]
that is, the notions of first order derivations and derivations coincide. On the other hand for any $n\in \mathbb{N}$ the set $\mathscr{D}_{n}(\mathbb{F})\setminus \mathscr{D}_{n-1}(\mathbb{F})$ is nonempty because  $d_{1}\circ \cdots \circ d_{n}\in \mathscr{D}_{n}(\mathbb{F})$, but $d_{1}\circ \cdots \circ d_{n}\notin \mathscr{D}_{n-1}(R)$, where $d_{1}, \ldots, d_{n}\in \mathscr{D}_{1}(\mathbb{F})$ are non-identically zero derivations.
\end{remark}

For our future purposes the notion of differential operators will also be important, see \cite{KisLac18}.

\begin{definition}
 Let $\mathbb{F}\subset \mathbb{C}$ be a field. We say that the map
 $D \colon \mathbb{F}\to \mathbb{C}$ is a \emph{differential operator of order at most $n$} if $D$ is the linear combination, with coefficients from $\mathbb{F}$, of finitely many maps of the form
 $d_1 \circ \cdots \circ d_k$, where $d_1, \ldots, d_k$ are derivations on $\mathbb{F}$ and $k\leq n$. If $k = 0$ then we interpret $d_1\circ \cdots \circ d_k$ as the identity function.
 We denote by $\mathscr{O}_n(\mathbb{F})$ the set of differential operators of order at most $n$ defined on $\mathbb{F}$. We say that the order of a differential operator $D$ is
$n$ if $D \in \mathscr{O}_{n}(\mathbb{F})\setminus\mathscr{O}_{n-1}(\mathbb{F})$ (where $\mathscr{O}_{-1}(\mathbb{F})= \emptyset$, by definition).
\end{definition}

\begin{remark}
The term \emph{differential operator} is justified by the following fact. Let $\mathbb{K} =\mathbb{Q}(t_1, \ldots, t_k)$, where $t_1, \ldots, t_k$ are algebraically independent over $\mathbb{Q}$. Then $\mathbb{K}$ is the field of all rational functions of $t_1, \ldots, t_k$ with rational coefficients. It is clear that
\[
 d_i = \frac{\partial}{ \partial t_{i}}
 \]
is a derivation on $\mathbb{K}$ for every $i = 1, \ldots, k$. Therefore, every
differential operator
\[
 D= \sum_{i_{1}+\cdots+i_{k}\leq n}c_{i_{1}, \ldots, i_{k}}\cdot \frac{\partial^{i_{1}+\cdots+i_{k}}}{\partial t_{1}^{i_{1}}\cdots \partial t_{k}^{i_{k}}},
\]
where the coefficients $c_{i_{1}, \ldots, i_{k}}$ belong to $\mathbb{K}$, is a differential operator of order at most $n$, and also conversely, if $D$ is a differential operator of order at most $n$ on the field $\mathbb{K} = \mathbb{Q}(t_1, \ldots, t_k)$, then $D$ is of the above form.
\end{remark}

The main result of \cite{KisLac18} is Theorem 1.1 that reads in our settings as follows.

\begin{theorem}\label{theorem_KisLac}
 Let $\mathbb{F}\subset \mathbb{C}$ be a field and let $n$ be a positive integer. Then, for every function $D \colon \mathbb{F}\to \mathbb{C}$, the
following are equivalent.
\begin{enumerate}[(i)]
\item $D\in \mathscr{D}_{n}(\mathbb{F})$
\item $D\in \mathrm{cl}\left(\mathscr{O}_{n}(\mathbb{F})\right)$
\item $D$ is additive on $\mathbb{F}$, $D(1) = 0$, and $D/j$, as a map from the group $\mathbb{F}^{\times}$ to $\mathbb{C}$, is a generalized polynomial of degree at most $n$. Here $j$ stands for the identity map defined on $\mathbb{F}$.
\end{enumerate}
\end{theorem}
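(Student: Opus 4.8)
The plan is to treat the equivalence (i)~$\Leftrightarrow$~(iii) as the structural core, proved by a single induction on $n$ that runs in both directions simultaneously, and then to obtain (ii) by sandwiching $\mathrm{cl}\left(\mathscr{O}_{n}(\mathbb{F})\right)$ between the two. Throughout I write $P = D/j$ for the map $x \mapsto D(x)/x$ on $\mathbb{F}^{\times}$, and I use the difference operator $\Delta$ on the multiplicative group $\mathbb{F}^{\times}$, so that $\Delta_{y}P(x) = P(xy) - P(x)$. The elementary fact I rely on is the standard characterization that $P$ is a generalized polynomial of degree at most $n$ on $\mathbb{F}^{\times}$ precisely when $\Delta_{y_{1}}\cdots\Delta_{y_{n+1}}P \equiv 0$, equivalently when $\Delta_{y}P$ is a generalized polynomial of degree at most $n-1$ for every $y$.

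For the base case $n=1$, a derivation $d$ satisfies $d(1)=0$ and $d(xy)/(xy) = d(x)/x + d(y)/y$, so $P=d/j$ is additive on $\mathbb{F}^{\times}$, i.e. a generalized monomial of degree $1$; conversely additivity of $P$ together with $D(1)=0$ forces the Leibniz rule. For the inductive step, given $D \in \mathscr{D}_{n}(\mathbb{F})$ with defining bi-derivation $B$ of order $n-1$, dividing $D(xy) - xD(y) - D(x)y = B(x,y)$ by $xy$ yields $\Delta_{y}P(x) - P(y) = B(x,y)/(xy)$. Since $x \mapsto B(x,y) \in \mathscr{D}_{n-1}(\mathbb{F})$ for each fixed $y$, the induction hypothesis makes the right-hand side a generalized polynomial of degree at most $n-1$ in $x$; hence so is $\Delta_{y}P$, and the characterization above gives that $P$ is a generalized polynomial of degree at most $n$. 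Running the same computation backwards proves (iii)~$\Rightarrow$~(i): from $P$ of degree at most $n$ one defines $B(x,y) = D(xy) - xD(y) - D(x)y$, checks additivity in each variable and $B(1,y) = -D(1)y = 0$, and observes that $B(x,y)/x = y\left(\Delta_{y}P(x) - P(y)\right)$ is a generalized polynomial of degree at most $n-1$ in $x$, so by the induction hypothesis $B(\cdot,y) \in \mathscr{D}_{n-1}(\mathbb{F})$, and symmetrically in $y$; thus $B$ is a bi-derivation of order $n-1$ and $D \in \mathscr{D}_{n}(\mathbb{F})$.

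With (i)~$\Leftrightarrow$~(iii) in hand, the inclusion $\mathscr{O}_{n}(\mathbb{F}) \subseteq \mathscr{D}_{n}(\mathbb{F})$ is routine: a composition $d_{1}\circ\cdots\circ d_{k}$ with $k\le n$ lies in $\mathscr{D}_{k}(\mathbb{F}) \subseteq \mathscr{D}_{n}(\mathbb{F})$ by Remark~\ref{pathologic}, and using (iii) one sees at once that $\mathscr{D}_{n}(\mathbb{F})$ is closed under sums and under multiplication by field elements. Moreover condition (iii) is manifestly closed in the topology of pointwise convergence (additivity, $D(1)=0$, and the vanishing of $(n+1)$-fold differences of $D/j$ are all closed conditions), so $\mathscr{D}_{n}(\mathbb{F})$ is closed; therefore $\mathrm{cl}\left(\mathscr{O}_{n}(\mathbb{F})\right) \subseteq \mathscr{D}_{n}(\mathbb{F})$, which is (ii)~$\Rightarrow$~(i).

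The remaining and genuinely hard direction is (i)~$\Rightarrow$~(ii): realizing an arbitrary order-$n$ derivation as a pointwise limit of honest differential operators. The obstruction is that on all of $\mathbb{F}$ the generalized polynomial $D/j$ need not be a genuine polynomial, since its variety may be infinite dimensional. The plan is to localize to finitely generated subfields $\mathbb{K} \subseteq \mathbb{F}$, where the transcendence degree is finite; there a generalized polynomial is an actual polynomial in finitely many additive functions, and, as in the remark identifying differential operators on $\mathbb{Q}(t_{1},\ldots,t_{k})$ with partial-derivative combinations, one can exhibit $D|_{\mathbb{K}}$ as an element of $\mathscr{O}_{n}(\mathbb{K})$ of controlled order. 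Extending these local operators and passing to the limit along the directed family of finitely generated subfields then places $D$ in $\mathrm{cl}\left(\mathscr{O}_{n}(\mathbb{F})\right)$. I expect the main obstacle to lie exactly in this localization step: constructing the finitely many derivations explicitly, ensuring that the order of the resulting differential operator does not exceed $n$, and verifying the pointwise convergence of the extensions are the delicate parts.
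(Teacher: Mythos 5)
First, a point of orientation: the paper does not prove Theorem \ref{theorem_KisLac} at all --- it is imported verbatim as Theorem 1.1 of \cite{KisLac18} --- so there is no in-paper proof to compare yours against, and your attempt has to stand on its own. Judged that way, your induction for (i) $\Leftrightarrow$ (iii) is sound and is indeed the right mechanism: dividing the defining identity $D(xy)-xD(y)-D(x)y=B(x,y)$ by $xy$ turns the bi-derivation recursion into the statement that $\Delta_y(D/j)$ drops the degree by one on $\mathbb{F}^{\times}$, and the standard characterization of generalized polynomials by iterated differences (as in \cite{Sze91}, legitimate here since $\mathbb{C}$ is uniquely divisible) closes the induction in both directions. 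Your reduction of (ii) $\Rightarrow$ (i) to the closedness of condition (iii) under pointwise limits is also fine in principle.

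Two genuine problems remain. The lesser one: with the paper's definition of $\mathscr{O}_n(\mathbb{F})$, the $k=0$ term is the identity map, and $\mathrm{id}\notin\mathscr{D}_n(\mathbb{F})$ for any $n$ (every $D\in\mathscr{D}_n(\mathbb{F})$ satisfies $D(1)=0$, as your own condition (iii) records), so the inclusion $\mathscr{O}_n(\mathbb{F})\subseteq\mathscr{D}_n(\mathbb{F})$ that you call routine is literally false; your appeal to Remark \ref{pathologic} covers $1\le k\le n$ only. This is really a defect of the transcribed statement (condition (ii) needs $D(1)=0$, equivalently the identity component must be excluded from $\mathscr{O}_n$), but a proof should detect it rather than gloss over it. The serious gap is (i) $\Rightarrow$ (ii): what you give is a plan, not a proof. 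You yourself list the three delicate points --- writing $D|_{\mathbb{K}}$ as a differential operator of order at most $n$ on a finitely generated subfield $\mathbb{K}$ (this needs the nontrivial fact that a generalized polynomial on $\mathbb{K}^{\times}$ has finite-dimensional variety and is hence an honest polynomial, plus an argument converting a degree-$\le n$ polynomial in logarithmic derivatives into a combination of at most $n$-fold compositions without raising the order), extending the resulting derivations from $\mathbb{K}$ to $\mathbb{F}$, and verifying pointwise convergence along the net of finitely generated subfields --- and none of them is carried out. Those steps are precisely the content of \cite{KisLac18}, so as written the hard implication is assumed rather than proved.
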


\section{Results}

\subsection{Elementary observations: reduction of the problem}\label{SS2.1}

This part begins with some elementary, yet fundamental observations. As the following lemmata show, the original problem can be reduced to a more simpler equation.

\begin{lemma}[Homogenization]\label{lemma_homogenization}
 Let $n$ be a positive integer, $\mathbb{F}\subset \mathbb{C}$ be a field and
 $p_{1}, \ldots, p_{n}, q_{1}, \ldots, q_{n}$ be fixed positive integers.
 Assume that the additive functions $f_{1}, \ldots, f_{n}, g_{1}, \ldots, g_{n}\colon \mathbb{F}\to \mathbb{C}$ satisfy functional equation \eqref{Eq_inner}, that is,
 \[
\sum_{i=1}^{n}f_{i}(x^{p_{i}})g_{i}(x^{q_{i}})= 0
 \]
 for each $x\in \mathbb{F}$.
 If the set $\left\{ p_{1}, \ldots, p_{n}\right\}$ has a partition $\mathcal{P}_{1}, \ldots, \mathcal{P}_{k}$ with the property
 \[
 \text{if } p_{\alpha}, p_{\beta} \in \mathcal{P}_{j} \text{ for a certain index $j$, then } p_{\alpha}+q_{\alpha}= p_{\beta}+q_{\beta},
 \]
then the system of equations
\[
 \sum_{p_{\alpha}\in \mathcal{P}_{j}} f_{\alpha}(x^{p_{\alpha}})g_{\alpha}(x^{q_{\alpha}})=0
 \qquad
 \left(x\in \mathbb{F}, j=1, \ldots, k\right)
\]
is satisfied.
\end{lemma}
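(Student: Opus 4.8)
exploit the homogeneity structure already visible in the equation. The key observation is that each term $f_\alpha(x^{p_\alpha})g_\alpha(x^{q_\alpha})$ should behave, after the symmetrization/polarization technology of Section~2, like a homogeneous object whose ``degree'' is governed by $N_\alpha := p_\alpha + q_\alpha$. The partition $\mathcal P_1,\dots,\mathcal P_k$ groups the indices precisely so that within each block the sum $N_\alpha$ is a common constant, say $N_j$ for block $\mathcal P_j$. If I can show that terms with distinct values of $N_\alpha$ cannot cancel each other, then the single equation \eqref{Eq_inner} must split into separate equations, one per block, which is exactly the conclusion.

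**First I would**
make the homogeneity explicit by a scaling substitution. Replace $x$ by $rx$ for a rational scalar $r$ (or an integer $r$, using that additive functions are $\mathbb Q$-homogeneous). Since $f_\alpha$ is additive we have $f_\alpha((rx)^{p_\alpha}) = f_\alpha(r^{p_\alpha}x^{p_\alpha}) = r^{p_\alpha} f_\alpha(x^{p_\alpha})$, and likewise $g_\alpha((rx)^{q_\alpha}) = r^{q_\alpha} g_\alpha(x^{q_\alpha})$. Hence substituting $rx$ into \eqref{Eq_inner} yields
\[
 \sum_{i=1}^{n} r^{\,p_i+q_i}\, f_{i}(x^{p_{i}})g_{i}(x^{q_{i}})= 0
 \qquad
 \left(x\in \mathbb{F}\right)
\]
for every $r\in\mathbb Q$ (indeed every $r\in\mathbb Z$). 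Grouping by the common value $N_j$ of $p_i+q_i$ on each block, this becomes $\sum_{j=1}^{k} r^{N_j}\,S_j(x)=0$, where $S_j(x):=\sum_{p_\alpha\in\mathcal P_j} f_\alpha(x^{p_\alpha})g_\alpha(x^{q_\alpha})$ is exactly the block-sum I want to vanish.

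**The main step**
is then a Vandermonde argument on the exponents. The values $N_1,\dots,N_k$ are, by the definition of the partition (each block collects all indices sharing a given $p_\alpha+q_\alpha$, and distinct blocks carry distinct sums), pairwise distinct positive integers. For a fixed $x$, the relation $\sum_{j=1}^{k} r^{N_j}\,S_j(x)=0$ holding for infinitely many $r$ (all of $\mathbb Z$, say) forces each coefficient $S_j(x)$ to vanish: choosing $k$ distinct nonzero integer values $r_1,\dots,r_k$ gives a linear system with matrix $\bigl(r_\ell^{N_j}\bigr)_{\ell,j}$, which is an invertible generalized Vandermonde matrix because the $N_j$ are distinct and the $r_\ell$ are distinct nonzero integers. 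Therefore $S_j(x)=0$ for all $j$ and all $x\in\mathbb F$, which is precisely the asserted system.

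**The one point requiring care**
is the $\mathbb Q$-homogeneity of the additive functions together with the legitimacy of the substitution $x\mapsto rx$: one must note that $r^{p_\alpha}x^{p_\alpha}=(rx)^{p_\alpha}$ uses commutativity of $\mathbb F$ (automatic here, $\mathbb F\subset\mathbb C$) and that $f_\alpha(r^{p_\alpha}y)=r^{p_\alpha}f_\alpha(y)$ holds for integer $r^{p_\alpha}$ by additivity, so that no division and no regularity assumption is needed. I do not expect any genuine obstacle; the invertibility of the generalized Vandermonde system is the crux, and it is elementary since the exponents $N_j$ are distinct. The whole argument is purely algebraic and uses only additivity, not the heavier decomposability/exponential-polynomial machinery.
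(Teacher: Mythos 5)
Your proposal is correct and follows essentially the same route as the paper: substitute $rx$ for $x$, use the $\mathbb{Q}$-homogeneity of the additive functions to pull out the factor $r^{p_i+q_i}$, group terms by the common block degree $N_j$, and conclude that each block sum vanishes because a polynomial in $r$ that is identically zero on $\mathbb{Q}$ has all coefficients zero. The only cosmetic difference is that you make this last step explicit via an invertible generalized Vandermonde system at $k$ distinct nonzero integers, while the paper simply invokes the vanishing of the coefficients; both readings also tacitly use that distinct blocks carry distinct degrees $N_j$, which is the intended interpretation of the partition hypothesis.
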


\begin{proof}
 Let $n$ be a positive integer, $\mathbb{F}\subset \mathbb{C}$ be a field and
 $p_{1}, \ldots, p_{n}, q_{1}, \ldots, q_{n}$ be fixed positive integers.
 Assume that the additive functions $f_{1}, \ldots, f_{n}, g_{1}, \allowbreak \ldots, g_{n}\colon \mathbb{F}\to \mathbb{C}$ satisfy functional equation \eqref{Eq_inner}
 for each $x\in \mathbb{F}$.
 Assume further that the set $\left\{ p_{1}, \ldots, p_{n}\right\}$ has a partition $\mathcal{P}_{1}, \ldots, \mathcal{P}_{k}$ with the property
 \[
 \text{if } p_{\alpha}, p_{\beta} \in \mathcal{P}_{j} \text{ for a certain index $j$, then } p_{\alpha}+q_{\alpha}= p_{\beta}+q_{\beta}.
 \]
 Observe that for all $i=1, \ldots, n$, the mapping
 \[
  \mathbb{F}\ni x\longmapsto f_{i}(x^{p_{i}})g_{i}(x^{q_{i}})
 \]
is a generalized monomial of degree $p_{i}+q_{i}$. Indeed, it is the diagonalization of the symmetric
$(p_{i}+q_{i})$-additive mapping
\[
 \mathbb{F}^{p_{i}+q_{i}} \ni (x_{1}, \ldots, x_{p_{i}+q_{i}})
 \longmapsto
 f_{i}(x_{\sigma(1)}\cdots x_{\sigma(p_{i})})g_{i}(x_{\sigma(p_{i}+1)}\cdots x_{\sigma(p_{i}+q_{i})}).
\]
Since $\mathbb{F}\subset \mathbb{C}$, we necessarily have $\mathbb{Q}\subset \mathbb{F}$.
Let now $r\in \mathbb{Q}$ be arbitrary and substitute $rx$ in place of $x$ in equation \eqref{Eq_inner} to get
\[
 \sum_{i=1}^{n}f_{i}((rx)^{p_{i}})g_{i}((rx)^{q_{i}})= 0
 \qquad
 \left(r\in \mathbb{Q}, x\in \mathbb{F}\right).
\]
Using the $\mathbb{Q}$-homogeneity of the additive functions $f_{1}, \ldots, f_{n}$ and $g_{1}, \ldots, g_{n}$, we deduce
\begin{multline*}
 0=
 \sum_{i=1}^{n}f_{i}((rx)^{p_{i}})g_{i}((rx)^{q_{i}})=
 \sum_{i=1}^{n}f_{i}(r^{p_{i}}x^{p_{i}})g_{i}(r^{q_{i}}x^{q_{i}})
 \\
 =
 \sum_{i=1}^{n}r^{p_{i}+q_{i}}f_{i}(x^{p_{i}})g_{i}(x^{q_{i}})
 =
 \sum_{j=1}^{k} \sum_{p_{\alpha}\in \mathcal{P}_{j}}r^{p_{\alpha}+q_{\alpha}}  f_{\alpha}(x^{p_{\alpha}})g_{\alpha}(x^{q_{\alpha}})
 \\
 \left(r\in \mathbb
 Q, x\in \mathbb{F}\right).
\end{multline*}
Note that the right hand side of this equation is a (classical) polynomial in $r$ which is identically zero. Thus all of its coefficients should be (identically) zero, yielding that the system of equations
\[
 \sum_{p_{\alpha}\in \mathcal{P}_{j}} f_{\alpha}(x^{p_{\alpha}})g_{\alpha}(x^{q_{\alpha}})=0
 \qquad
 \left(x\in \mathbb{F}, j=1, \ldots, k\right)
\]
is fulfilled.
\end{proof}

\begin{remark}\label{rem2.1}
 The above lemma guarantees that \emph{ab initio}
 \[
  p_{i}+q_{i}=N
  \qquad
  \left(i=1, \ldots, n\right)
 \]
can be assumed. Otherwise, after using the above homogenization, we get a system of functional equations in which this condition is already fulfilled.
For instance, due to the above lemma, if the additive functions $f_{1}, \ldots, f_{5}\colon \mathbb{F}\to \mathbb{C}$ and $g_{1}, \ldots, g_{5}\colon \mathbb{F}\to \mathbb{C}$ satisfy equation
\begin{multline*}
 f_{1}(x^{24})g_{1}(x^{5})+f_{2}(x^{20})g_{2}(x^{9})+f_{3}(x^{19})g_{3}(x^{10})
 \\
 +
 f_{4}(x^{13})g_{4}(x^{7})+
 f_{5}(x^{12})g_{4}(x^{8})
 =
 0
 \qquad
 \left(x\in \mathbb{F}\right)
\end{multline*}
then
the equations
\[
 f_{1}(x^{24})g_{1}(x^{5})+f_{2}(x^{20})g_{2}(x^{9})+f_{3}(x^{19})g_{3}(x^{10})=0
 \qquad
 \left(x\in \mathbb{F}\right)
 \]
and
\[
   f_{4}(x^{13})g_{4}(x^{7})+ f_{5}(x^{12})g_{4}(x^{8})=0
   \qquad
   \left(x\in \mathbb{F}\right)
\]
are also fulfilled (separately).
\end{remark}

\begin{remark}\label{rem2.3}
 At first glance the assumption that $p_{1}, \ldots, p_{n}$ are different seems a reasonable and sufficient supposition.
 Clearly, if the parameters are not necessarily different then we cannot expect anything special for the form of the involved additive functions.
 Indeed, let $L\subset \mathbb{C}^{n}$ be a linear subspace and let
 $f_{1}, \ldots, f_{n}\colon \mathbb{F}\to \mathbb{C}$ and $g_{1}, \ldots, g_{n}\colon \mathbb{F}\to \mathbb{C}$ be additive functions such that
 $\mathrm{rng}(f)\subset L$ and $\mathrm{rng}(g)\subset L^{\perp}$, where
 \[
  f(x)= \left(f_{1}(x), \ldots, f_{n}(x)\right)
  \quad
  \text{and}
  \quad
  g(x)= \left(g_{1}(x), \ldots, g_{n}(x)\right)
  \quad
  \left(x\in \mathbb{F}\right).
 \]
In this case
\[
 \sum_{i=1}^{n}f_{i}(x)g_{i}(x)= \langle f(x), g(x) \rangle = 0
 \qquad
 \left(x\in \mathbb{F}\right).
\]
This shows the necessity of the above assumption.
Unfortunately, the sufficiency fails to hold.
 To see this, let $p$ and $q$ be positive integers and $f\colon \mathbb{F}\to \mathbb{C}$ be an \emph{arbitrary} additive function and
 define the complex-valued functions $f_{1}, g_{1}, f_{2}, g_{2}$ on $\mathbb{F}$ by
 \[
  f_{1}(x)= f(x) \quad
  g_{1}(x)= f(x) \quad
  f_{2}(x)= if(x) \quad
  g_{2}(x)= if(x)
  \qquad
  \left(x\in \mathbb{F}\right).
 \]
An immediate computation shows that we have
\[
 f_{1}(x^{p})g_{1}(x^{q})+f_{2}(x^{p})g_{2}(x^{q})= 0
 \qquad
 \left(x\in \mathbb{F}\right).
\]
\end{remark}

In view of the above remarks, from now on, the following assumptions are adopted.

\begin{enumerate}
 \item[C(i)] the positive integers $p_{1}, \ldots, p_{n}$ are arranged in a strictly increasing order, i.e., $p_1<\dots <p_{n}$;
 \item[C(ii)] for all $i=1, \ldots, n$ we have $p_{i}+q_{i}=N$;
 \item[C(iii)] for all $i, j \in \left\{ 1, \ldots, n\right\}$, $i\neq j$ we have $p_{i}\neq q_{j}$.
\end{enumerate}

\begin{remark}\label{rem2.4}
 Define the relation $\sim$ on $\mathbb{F}^{\mathbb{C}}$ by
 $  f\sim g $ if and only if there exists an automorphism $\varphi \colon \mathbb{C}\to \mathbb{C}$
  such that $\varphi \circ f=g$.
Obviously $\sim$ is an equivalence relation on $\mathbb{F}^{\mathbb{C}}$ that induces a partition on $\mathbb{F}^{\mathbb{C}}$.
\end{remark}

\begin{lemma}[Equivalence]\label{lem_equ_inner}
 Let $n$ be a positive integer, $\mathbb{F}\subset \mathbb{C}$ be a field and
 $p_{1}, \ldots, p_{n}, q_{1}, \ldots, q_{n}$ be fixed positive integers fulfilling the conditions C(i)-C(iii) of Remark \ref{rem2.3}.
 Assume that the additive functions $f_{1}, \ldots, f_{n},$ $g_{1}, \ldots, g_{n}\colon \mathbb{F}\to \mathbb{C}$ satisfy functional equation
 \eqref{Eq_inner}. Then for an arbitrary automorphism $\varphi \colon \mathbb{C}\to \mathbb{C}$ the functions
 $\varphi \circ f_{1}, \ldots, \varphi \circ f_{n}, \varphi \circ g_{1}, \ldots, \varphi \circ g_{n}$ also fulfill equation \eqref{Eq_inner}.
\end{lemma}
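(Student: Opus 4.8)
The plan is to exploit that an automorphism $\varphi$ of $\mathbb{C}$ is simultaneously additive and multiplicative, so it can simply be pushed through the entire equation. First I would record that $\varphi \circ f_{i}$ and $\varphi \circ g_{i}$ are again additive functions from $\mathbb{F}$ to $\mathbb{C}$: since $\varphi$ respects addition and each $f_{i}$ (resp.\ $g_{i}$) satisfies the Cauchy equation, the composition does as well. Thus the transformed functions lie in the correct class, and it only remains to verify that they solve \eqref{Eq_inner}.

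The core step is to apply $\varphi$ to both sides of \eqref{Eq_inner}. Fixing $x \in \mathbb{F}$, from $\sum_{i=1}^{n} f_{i}(x^{p_{i}}) g_{i}(x^{q_{i}}) = 0$ together with $\varphi(0) = 0$ I would obtain $\varphi\left(\sum_{i=1}^{n} f_{i}(x^{p_{i}}) g_{i}(x^{q_{i}})\right) = 0$. Using the additivity of $\varphi$ to distribute over the finite sum, and then the multiplicativity of $\varphi$ to split each product, this becomes $\sum_{i=1}^{n} \varphi\bigl(f_{i}(x^{p_{i}})\bigr)\, \varphi\bigl(g_{i}(x^{q_{i}})\bigr) = 0$. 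Rewriting $\varphi\bigl(f_{i}(x^{p_{i}})\bigr) = (\varphi \circ f_{i})(x^{p_{i}})$ and likewise for $g_{i}$ yields precisely \eqref{Eq_inner} for the functions $\varphi \circ f_{i}$ and $\varphi \circ g_{i}$.

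Since this chain of identities is valid for every $x \in \mathbb{F}$, the conclusion follows. I expect no genuine obstacle here: the argument uses only the defining homomorphism properties of $\varphi$ and requires neither conditions C(i)--C(iii) nor any special structure of the exponents $p_{i}, q_{i}$. The single point to keep in mind is that all operations in the displayed computation stay internal to $\mathbb{C}$, which is automatic because the ranges of $f_{i}, g_{i}$ lie in $\mathbb{C}$ and $\varphi$ maps $\mathbb{C}$ into itself; in particular the whole statement is really a manifestation of the equivalence relation $\sim$ introduced in Remark \ref{rem2.4}.
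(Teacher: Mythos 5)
Your argument is correct and is precisely the intended one: the paper states this lemma without proof because the computation — apply $\varphi$ to the identity, use additivity of $\varphi$ to distribute over the sum and multiplicativity to split each product, together with $\varphi(0)=0$ — is immediate. Your observation that conditions C(i)--C(iii) play no role here is also accurate; the statement holds for arbitrary positive integer exponents.
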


\subsection{Structure of solutions}
 We can always restrict ourselves to the case when all the involved functions are non-identically zero. Otherwise, the number of the terms appearing in equation \eqref{Eq_inner} can be reduced.

\begin{lemma}[Symmetrization]\label{lem_monom}
 Let $k$ and $n$ be positive integers, $\mathbb{F}\subset \mathbb{C}$ be a field and
 $m_{1}, \ldots, m_{n}\colon \mathbb{F}\to \mathbb{C}$ be monomials of degree $k$.
 If
 \[
  \sum_{i=1}^{n}m_{i}(x)=0
 \]
holds for all $x\in \mathbb{F}$, then
\[
 \sum_{i=1}^{n}M_{i}(x_{1}, \ldots, x_{k})=0
\]
is fulfilled for all $x_{1}, \ldots, x_{k}$, where for all $i=1, \ldots, n$, the mapping
$M_{i}\colon \mathbb{F}^{k}\to \mathbb{C}$ is the uniquely determined symmetric, $k$-additive function such that
\[
 M_{i}(x, \ldots, x)= m_{i}(x)
 \qquad
 \left(x\in \mathbb{F}\right).
\]
\end{lemma}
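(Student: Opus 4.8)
The plan is to use the Polarization formula (Theorem~\ref{theorem_polarization}) together with its consequences on the uniqueness of the symmetric multi-additive extension of a monomial. The key observation is that each $m_i$ is, by hypothesis, a generalized monomial of degree $k$, so each is the diagonalization $M_i^{\ast}$ of a \emph{uniquely determined} symmetric $k$-additive function $M_i\colon \mathbb{F}^{k}\to \mathbb{C}$. The uniqueness here is exactly what Lemma~\ref{mainfact} guarantees: since $\mathbb{F}\subset \mathbb{C}$ is a field of characteristic $0$, multiplication by $k!$ is injective (indeed bijective) on the additive group $(\mathbb{F},+)$, so the symmetric $k$-additive function producing a given diagonal is unique. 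Thus the statement is well-posed.

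First I would set $M \colon \mathbb{F}^{k}\to \mathbb{C}$ equal to $\sum_{i=1}^{n} M_i$. Because a finite sum of symmetric $k$-additive functions is again symmetric and $k$-additive, $M$ is itself symmetric and $k$-additive, and its diagonalization satisfies
\[
 M^{\ast}(x)= \sum_{i=1}^{n} M_i^{\ast}(x)= \sum_{i=1}^{n} m_i(x)= 0
 \qquad
 \left(x\in \mathbb{F}\right),
\]
where the last equality is the hypothesis. So $M$ is a symmetric $k$-additive function whose diagonalization vanishes identically.

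The decisive step is then to invoke Lemma~\ref{mainfact} once more: since multiplication by $k!$ is injective on the commutative group $(\mathbb{F},+)$ (characteristic $0$), the condition $M^{\ast}\equiv 0$ forces $M\equiv 0$ identically on $\mathbb{F}^{k}$. Writing this out gives
\[
 \sum_{i=1}^{n} M_i(x_1, \ldots, x_k)= M(x_1, \ldots, x_k)= 0
 \qquad
 \left(x_1, \ldots, x_k\in \mathbb{F}\right),
\]
which is precisely the desired conclusion. Equivalently, one could apply the Polarization formula directly by acting with the difference operator $\Delta_{y_1, \ldots, y_k}$ on the identity $M^{\ast}\equiv 0$: for $m=n=k$ the formula yields $k!\,M(y_1, \ldots, y_k)=\Delta_{y_1,\ldots,y_k}M^{\ast}(x)=0$, and dividing by $k!$ (legitimate in characteristic $0$) gives $M\equiv 0$.

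The only point requiring genuine care — rather than a real obstacle — is the \emph{uniqueness} of the symmetric $k$-additive extension $M_i$ of each $m_i$, since the statement asserts $M_i$ is ``the uniquely determined'' such function. This is again a direct consequence of Lemma~\ref{mainfact}: if two symmetric $k$-additive functions had the same diagonalization, their difference would be a symmetric $k$-additive function with vanishing diagonal, hence identically zero by the injectivity of multiplication by $k!$. Everything else is linearity of diagonalization and the elementary fact that symmetry and $k$-additivity are preserved under finite sums; no computation beyond these structural observations is needed.
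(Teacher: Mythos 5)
Your proof is correct and follows essentially the same route as the paper: form the symmetric $k$-additive function $\sum_i M_i$, observe its trace vanishes by hypothesis, and conclude it is identically zero via the uniqueness of the symmetric multi-additive extension (Polarization formula / Lemma~\ref{mainfact}). Your additional remark on the well-posedness of the $M_i$ is a harmless elaboration of the same uniqueness fact.
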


\begin{proof}
 Let $k$ and $n$ be positive integers, $\mathbb{F}\subset \mathbb{C}$ be a field and
 $m_{1}, \ldots, m_{n}\colon \mathbb{F}\allowbreak\to \mathbb{C}$ be monomials of degree $k$ and
 assume that
 \[
  \sum_{i=1}^{n}m_{i}(x)=0
 \]
holds for all $x\in \mathbb{F}$. Since for all $i=1, \ldots, n$, the function $m_{i}$ is a monomial of degree $k$, there exists a symmetric, $k$-additive function $M_{i}\colon \mathbb{F}^{k}\to \mathbb{C}$ such that we have
\[
 M_{i}(x, \ldots, x)= m_{i}(x)
 \qquad
 \left(x\in \mathbb{F}\right).
\]
Obviously the mapping $\displaystyle\sum_{i=1}^{n}m_{i}$ is a monomial of degree $k$ which is, by the assumptions, identically zero. To this monomial there also corresponds a symmetric and $k$-additive mapping, namely
\[
\mathbb{F}^{k}\ni (x_{1}, \ldots, x_{k}) \longmapsto \sum_{i=1}^{n}M_{i}(x_{1}, \ldots x_{k}).
\]
Observe that the trace of this symmetric and $k$-additive mapping is identically zero. At the same time, due to the Polarization formula (Theorem \ref{theorem_polarization}), every symmetric and $k$-additive function is \emph{uniquely} determined by its trace. Thus
\[
 \sum_{i=1}^{n}M_{i}(x_{1}, \ldots, x_{k})=0
\]
for all $x_{1}, \ldots, x_{k}\in \mathbb{F}$.
\end{proof}

\begin{lemma}[Symmetrization]\label{lem_sym_inner}
 Let $n$ be a positive integer, $\mathbb{F}\subset \mathbb{C}$ be a field and
 $p_{1}, \ldots, p_{n}, q_{1}, \ldots, q_{n}$ be fixed positive integers fulfilling conditions C(ii), i.e., there is a $N\in \mathbb{N}$ such $p_i+q_i=N$ for all $i=1,\dots,n$.
 Assume that the additive functions $f_{1}, \ldots, f_{n}, g_{1}, \ldots, g_{n}\colon \mathbb{F}\to \mathbb{C}$ satisfy functional equation
 \eqref{Eq_inner}
 for each $x\in \mathbb{F}$. Then
 \[
  \frac{1}{N!} \sum_{\sigma \in \mathscr{S}_{N}}\sum_{i=1}^{n} f_{i}(x_{\sigma(1)} \cdots x_{\sigma(p_{i})}) \cdot g_{i}(x_{\sigma(p_{i}+1)} \cdots x_{\sigma(N)})=0
 \]
holds for all $x_{1}, \ldots, x_{N}\in \mathbb{F}$.
\end{lemma}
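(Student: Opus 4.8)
The plan is to reduce this to the single-monomial symmetrization Lemma \ref{lem_monom} by recognizing the left-hand side of \eqref{Eq_inner} as an identically vanishing monomial of degree $N$, and then to write down the associated symmetric $N$-additive map explicitly.

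First I would observe, exactly as in the proof of Lemma \ref{lemma_homogenization}, that for each $i$ the term
\[
 m_{i}(x) = f_{i}(x^{p_{i}}) g_{i}(x^{q_{i}}) \qquad (x \in \mathbb{F})
\]
is a monomial of degree $p_{i} + q_{i} = N$, since it is the diagonalization of an $N$-additive mapping. Consequently $\sum_{i=1}^{n} m_{i}$ is again a monomial of degree $N$, and by hypothesis it is identically zero. Thus Lemma \ref{lem_monom} applies with $k = N$ and yields $\sum_{i=1}^{n} M_{i} \equiv 0$ on $\mathbb{F}^{N}$, where $M_{i}$ is the \emph{unique} symmetric, $N$-additive function whose trace is $m_{i}$.

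The key step is the explicit identification of $M_{i}$. I would introduce the (non-symmetric) auxiliary map $\widetilde{M}_{i} \colon \mathbb{F}^{N} \to \mathbb{C}$ given by
\[
 \widetilde{M}_{i}(x_{1}, \ldots, x_{N}) = f_{i}(x_{1} \cdots x_{p_{i}}) \cdot g_{i}(x_{p_{i}+1} \cdots x_{N}).
\]
Since $f_{i}$ and $g_{i}$ are additive and each variable $x_{j}$ occurs to the first power in exactly one of the two products, replacing a single $x_{j}$ by $x_{j} + x_{j}'$ splits the relevant product additively; hence $\widetilde{M}_{i}$ is $N$-additive. Its symmetrization
\[
 M_{i}(x_{1}, \ldots, x_{N}) = \frac{1}{N!} \sum_{\sigma \in \mathscr{S}_{N}} f_{i}(x_{\sigma(1)} \cdots x_{\sigma(p_{i})}) \cdot g_{i}(x_{\sigma(p_{i}+1)} \cdots x_{\sigma(N)})
\]
is then symmetric and $N$-additive, and on the diagonal every summand collapses to $f_{i}(x^{p_{i}}) g_{i}(x^{q_{i}})$, so $M_{i}(x, \ldots, x) = \frac{1}{N!} \cdot N! \cdot m_{i}(x) = m_{i}(x)$. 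By the uniqueness clause of the Polarization formula (Theorem \ref{theorem_polarization}), this $M_{i}$ coincides with the symmetric $N$-additive function furnished by Lemma \ref{lem_monom}.

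Finally, substituting these explicit formulas into $\sum_{i=1}^{n} M_{i} \equiv 0$ produces precisely the asserted identity. The only genuine obstacle is this explicit identification of $M_{i}$: one must verify that the naive symmetrization of $\widetilde{M}_{i}$ is $N$-additive with the correct diagonal and then appeal to uniqueness, rather than constructing $M_{i}$ abstractly. Everything else is routine bookkeeping with the permutations in $\mathscr{S}_{N}$.
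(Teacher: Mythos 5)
Your proposal is correct and follows essentially the same route as the paper: both recognize each summand of \eqref{Eq_inner} as the trace of the symmetric $N$-additive map obtained by averaging $f_{i}(x_{\sigma(1)}\cdots x_{\sigma(p_{i})})g_{i}(x_{\sigma(p_{i}+1)}\cdots x_{\sigma(N)})$ over $\mathscr{S}_{N}$, and then invoke Lemma \ref{lem_monom}. The paper simply states this identification without spelling out the $N$-additivity of the unsymmetrized map or the uniqueness argument via the Polarization formula, which you supply explicitly.
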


\begin{proof}
 Let $n$ be a positive integer, $\mathbb{F}\subset \mathbb{C}$ be a field and
 $p_{1}, \ldots, p_{n}, q_{1}, \ldots, q_{n}$ be fixed positive integers fulfilling conditions C(ii).
 Assume that the additive functions $f_{1}, \ldots, f_{n}, g_{1}, \ldots, g_{n}\colon \mathbb{F}\to \mathbb{C}$ satisfy functional equation
 \[
  \sum_{i=1}^{n}f_{i}(x^{p_{i}})g_{i}(x^{q_{i}})= 0
 \]
 for each $x\in \mathbb{F}$. Due to the additivity of the functions
 $f_{1}, \ldots, f_{n}$ and $g_{1}, \ldots, g_{n}$ for all $i=1, \ldots, n$, the mapping
 \[
  x\longmapsto f_{i}(x^{p_{i}})g_{i}(x^{q_{i}})
 \]
 is a monomial of degree $p_{i}+q_{i}=N$. Further, it is the trace of the symmetric and $N$-additive mapping
\begin{multline*}
F_{i}(x_{1}, \ldots, x_{N})
\\
= \frac{1}{N!} \sum_{\sigma \in \mathscr{S}_{N}} f_{i}(x_{\sigma(1)} \cdots x_{\sigma(p_{i})}) \cdot g_{i}(x_{\sigma(p_{i}+1)} \cdots x_{\sigma(N)})
 \\
 \left(x_{1}, \ldots, x_{N}\in \mathbb{F}\right).
\end{multline*}
Therefore, the statement follows from Lemma \ref{lem_monom}.
\end{proof}

\subsection{Solutions of equation \texorpdfstring{\eqref{Eq_inner}}{}}\label{SS2.2}

The main purpose of the subsection is to describe under the conditions C(i)--C(iii), the solution space of equation \eqref{Eq_inner}. We first prove that solutions of equation \eqref{Eq_inner} are decomposable functions on the multiplicative group $\mathbb{F}^{\times}$.  In view of Laczkovich \cite{Lac19}, this immediately yields that the solutions of equation \eqref{Eq_inner} are generalized exponential polynomials of this group.

\begin{lemma}\label{lem_decop_inner}
 Let $n$ be a positive integer, $\mathbb{F}\subset \mathbb{C}$ be a field and
 $p_{1}, \ldots, p_{n}, q_{1}, \ldots, \allowbreak q_{n}$ be fixed positive integers fulfilling conditions C(i) and C(ii).
 Assume that the additive functions $f_{1}, \ldots, f_{n}, g_{1}, \ldots, g_{n}\colon \mathbb{F}\to \mathbb{C}$ satisfy functional equation
 \eqref{Eq_inner}
 for each $x\in \mathbb{F}$. Then all the functions $f_{1}, \ldots, f_{n}$ as well as $g_{1}, \ldots, g_{n}$ are decomposable functions of the group $\mathbb{F}^{\times}$.
\end{lemma}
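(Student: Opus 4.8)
The plan is to deduce the decomposability of the individual functions from the single multivariable identity produced by symmetrization, and then to read off the conclusion through the characterization of generalized exponential polynomials by decomposability (Shulman \cite{Shu10}, Laczkovich \cite{Lac19}). First I would invoke Lemma \ref{lem_sym_inner}: since C(ii) holds, the solutions satisfy the symmetric $N$-additive identity obtained by polarizing \eqref{Eq_inner}. Reorganizing the sum over $\mathscr{S}_{N}$ by the unordered $p_i$-element index set $S$ fed into $f_i$, this identity reads
\[
 \sum_{i=1}^{n} p_i!\,q_i! \sum_{|S|=p_i} f_i\Big(\prod_{s\in S}x_s\Big) g_i\Big(\prod_{t\notin S}x_t\Big)=0
\]
for all $x_1,\dots,x_N\in\mathbb{F}^{\times}$. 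Each summand is manifestly a product of a function of the variables indexed by $S$ and a function of the remaining ones, hence decomposable with respect to the bipartition $(S,S^{c})$; the real issue is to extract information about a \emph{single} factor $f_j$ (or $g_j$).

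Second, to target a fixed $f_j$ with $p_j\ge 2$, I would freeze the complementary block by substituting a constant $c\in\mathbb{F}^{\times}$ for the $q_j$ variables outside a chosen $p_j$-element index set $U$, leaving $y_1,\dots,y_{p_j}$ free with product $Y=y_1\cdots y_{p_j}$. The substituted identity splits into two kinds of terms: those in which the free block is distributed nontrivially between the $f_i$- and $g_i$-arguments, which are decomposable functions of $(y_1,\dots,y_{p_j})$; and the ``full-block'' terms, in which all of $U$ lands inside one $f_i$ or one $g_i$, producing $f_i(c^{\,p_i-p_j}Y)$ (only possible for $p_i\ge p_j$) or $g_i(c^{\,q_j-p_i}Y)$ (only possible for $q_i\ge p_j$). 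By C(i) the $p_i$ are distinct, so among the $f$-type full-block terms only indices with $p_i\ge p_j$ occur, and the coefficient of $f_j(Y)$ is a nonzero numerical multiple of $g_j(c^{\,q_j})$.

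The core of the argument is then an induction that peels the functions off in decreasing order of the size of the argument-block they receive (weight $p_i$ for $f_i$, weight $q_i=N-p_i$ for $g_i$). At the top weight the competing full-block terms of strictly larger weight are absent, so after choosing $c$ with the leading coefficient nonzero one obtains $\mathrm{const}\cdot f_j(Y)$ (respectively $g_j$) equal to a decomposable function of $p_j\ge 2$ variables; by the decomposability characterization this makes $f_j$ a generalized exponential polynomial. At a lower weight $w$, every full-block contaminant of weight strictly greater than $w$ involves a function already known to be a generalized exponential polynomial, whence---by the forward direction of the same characterization, together with the translation invariance of the class of generalized exponential polynomials---its contribution is again decomposable and may be absorbed into the decomposable side. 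One is thus left, at each weight, with the finitely many functions whose argument-block has exactly size $w$; I would disentangle these by letting the frozen constant $c$ vary and applying the linear-independence results (Lemma \ref{L_lin_dep}, Theorem \ref{T_Poly_Ind}) to the resulting family of relations. The functions whose argument-block already has size $1$ (at most $f_1$ and $g_n$) fall outside the lift argument; these I would treat last, after all other functions are known, by specializing the symmetrized identity in a single free variable, which directly exhibits the remaining function as a generalized exponential polynomial provided its partner is not identically zero (and if the partner vanishes, the corresponding term drops out and $n$ decreases).

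The main obstacle is precisely this disentanglement. Because the symmetrized identity is multi-homogeneous of multidegree $(1,\dots,1)$, all terms share the same total degree, so no grading argument can separate the contributions of the different unknown functions, and one is forced to separate them functionally. The delicate case is a ``weight tie'', most notably between $f_j$ and its own partner $g_j$ when $N=2p_j$ (and, in the absence of C(iii), between $f_j$ and some $g_i$ with $p_i=q_j$); resolving it rests on producing enough independent relations by varying $c$ and on guaranteeing that the relevant leading coefficients can be made nonzero, which is where the bulk of the technical work lies.
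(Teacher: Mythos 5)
Your proposal follows essentially the same route as the paper's own proof: symmetrize via Lemma \ref{lem_sym_inner}, freeze the complementary block at constants to isolate the function with the currently maximal argument-block, and descend, absorbing the already-established higher-weight terms and treating the block-size-one functions at the end. The one step you defer as ``the bulk of the technical work''---the weight tie between an $f$ and a $g$ of equal block size---is resolved in the paper by exactly the device you name, namely two substitution points $a,b$ combined with the linear independence (or dependence) of the two tied functions, so there is no gap.
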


\begin{proof}
 Let $n$ be a positive integer, $\mathbb{F}\subset \mathbb{C}$ be a field and
 $p_{1}, \ldots, p_{n}, q_{1}, \ldots, q_{n}$ be fixed positive integers fulfilling conditions C(i) and C(ii).

 Let us assume first that condition C(iii) is also satisfied.
 Assume that the additive functions $f_{1}, \ldots, f_{n},$ $g_{1}, \ldots, \allowbreak g_{n}\colon \mathbb{F}\to \mathbb{C}$ satisfy functional equation
 \eqref{Eq_inner}
 for each $x\in \mathbb{F}$.
 Let
 \[
  S= \left\{p_{1}, \ldots, p_{n} \right\} \cup \left\{ q_{1}, \ldots, q_{n}\right\}.
 \]
Then $\max S= \max \left\{ p_{n}, q_{1} \right\}$. By condition C(iii), we have $p_{n}\neq q_{1}$.
 Without the loss of generality $p_{n}>q_{1}$ can be assumed, otherwise we follow a similar argument.
 In view of  Lemma \ref{lem_sym_inner}, we have
 \[
  \frac{1}{N!} \sum_{\sigma \in \mathscr{S}_{N}}\sum_{i=1}^{n} f_{i}(x_{\sigma(1)} \cdots x_{\sigma(p_{i})}) \cdot g_{i}(x_{\sigma(p_{i}+1)} \cdots x_{\sigma(N)})=0
 \]
for all $x_{1}, \ldots, x_{N}\in \mathbb{F}$, or after some rearrangement,
\begin{multline*}
   \frac{1}{N!} \sum_{\sigma \in \mathscr{S}_{N}} f_{n}(x_{\sigma(1)} \cdots x_{\sigma(p_{n})}) \cdot g_{n}(x_{\sigma(p_{n}+1)} \cdots x_{\sigma(N)})
   \\=
   -\frac{1}{N!} \sum_{\sigma \in \mathscr{S}_{N}}\sum_{i=1}^{n-1} f_{i}(x_{\sigma(1)} \cdots x_{\sigma(p_{i})}) \cdot g_{i}(x_{\sigma(p_{i}+1)} \cdots x_{\sigma(N)})
   \\
   \left(x_{1}, \ldots, x_{N}\in \mathbb{F}^{\times}\right).
   \end{multline*}
Let now
\[
 x_{p_{n}+1}= \cdots = x_{N}= 1,
\]
then the above identity says that $g_{n}(1) \cdot f_{n}$ is decomposable. If $g_{n}(1)$ were zero, but $g_{n}$ would not be identically zero, then there would exist
$a\in \mathbb{F}^{\times}$ such that
$g_{n}(a)\neq 0$. In this case the above substitutions should be modified to
\[
 x_{p_{n}+1}= a , \; x_{p_{n}+2}= \cdots = x_{N}= 1,
\]
to get the same conclusion.

If $p_{n}= q_{j}$ for some $j=\{1, \dots, n\}$ (i.e., C(iii) does not hold), then without loss of generality we may assume that $j=1$, otherwise we may change the role of $f_i$ and $g_i$, and $p_i$ and $q_i$, respectively, and proceed as above.
If $p_{n}= q_{1}$, then we have
\begin{multline*}
   \frac{1}{N!} \sum_{\sigma \in \mathscr{S}_{n}} f_{n}(x_{\sigma(1)} \cdots x_{\sigma(p_{n})}) \cdot g_{n}(x_{\sigma(p_{n}+1)} \cdots x_{\sigma(N)})
   \\
   +
   \frac{1}{N!} \sum_{\sigma \in \mathscr{S}_{n}} g_{1}(x_{\sigma(1)} \cdots x_{\sigma(p_{n})}) \cdot f_{1}(x_{\sigma(p_{n}+1)} \cdots x_{\sigma(N)})
   \\=
   -\frac{1}{N!} \sum_{\sigma \in \mathscr{S}_{n}}\sum_{i=2}^{n-1} f_{i}(x_{\sigma(1)} \cdots x_{\sigma(p_{i})}) \cdot g_{i}(x_{\sigma(p_{i}+1)} \cdots x_{\sigma(N)})
   \\
   \left(x_{1}, \ldots, x_{N}\in \mathbb{F}^{\times}\right).
   \end{multline*}
This equation with the substitutions
\[
 x_{p_{n}+1}= \cdots = x_{N}= 1,
\]
yields that a linear combination of $f_{n}$ and $g_{1}$ is decomposable.
If $\left\{f_{n}, g_{1} \right\}$ is linearly dependent, then this obviously means that both $f_{n}$ and $g_{1}$ are decomposable functions.
If this system is not linearly dependent, then there exist
$a, b\in \mathbb{F}^{\times}$, $a\neq b$ and different complex constants $c_{1}$ and $c_{2}$ such that
\[
 f_{n}(a)= c_{1}g_{1}(a)
 \qquad
 f_{n}(b)= c_{2}g_{1}(b).
\]
With the substitutions
\[
 x_{p_{n}+1}= a , \; x_{p_{n}+2}= \cdots = x_{N}= 1,
\]
and
\[
 x_{p_{n}+1}= b , \; x_{p_{n}+2}= \cdots = x_{N}= 1,
\]
 we get that the functions
 \[
  (x_{1}, \ldots, x_{p_{1}}) \longmapsto g_{1}(x_{1} \cdots x_{p_{1}}) f_{n}(a) + f_{n}(x_{1} \cdots x_{p_{1}}) g_{1}(a)
 \]
and
 \[
  (x_{1}, \ldots, x_{p_{1}}) \longmapsto g_{1}(x_{1} \cdots x_{p_{1}}) f_{n}(b) + f_{n}(x_{1} \cdots x_{p_{1}}) g_{1}(b)
 \]
 are decomposable. Since finite linear combinations of decomposable functions are also decomposable, it follows that $f_{n}$ and $g_{1}$ are decomposable, separately.

 After that, let us consider the set $S\setminus \left\{ p_{n}\right\}$ and apply the above argument for this set.
 With this step-by-step descending argument the statement of the lemma follows.
 \end{proof}

 \begin{remark}
We emphasize that condition C(iii) in Lemma \ref{lem_decop_inner} has not been assumed. Thus, the fact that the additive solutions of \eqref{Eq_inner} are decomposable functions can be deduced only under the conditions C(i) and C(ii). On the other hand, for our purpose to describe the solutions more concretely we have to assume also condition C(iii) to avoid further difficulties. We believe however that most of our methods can work similarly only under the conditions C(i) and C(ii).
\end{remark}

\begin{remark}
  Note also that in general we cannot state more than that the involved functions
  $f_{1}, \ldots, f_{n}$ and $g_{1}, \ldots, g_{n}$ are decomposable functions on the commutative group $\mathbb{F}^{\times}$. In other words, we can only state that the solutions of the functional equation in question are higher order derivations (see below Corollary \ref{cor_inner}). In general it is not true that the solutions of this functional equation are differential operators (i.e., exponential polynomials of the multiplicative group $\mathbb{F}^{\times}$). To see this, let us consider the functional equation
  \[
   xf_{1}(x^{6})+x^{2}f_{2}(x^{5})+x^{3}f_{3}(x^{4})=0
   \qquad
   \left(x\in \mathbb{F}\right).
  \]
Indeed, using the results of \cite{GseKisVin18}, we deduce that $f_{1}, f_{2}, f_{3}\in \mathscr{D}_{2}(\mathbb{F})$.
\end{remark}

 Due to a result of Laczkovich \cite{Lac19} and under the assumptions of Lemma \ref{lem_decop_inner},
 there exist a positive integer $l$, there are generalized polynomials $P_{k, i}, Q_{k, i}\colon \allowbreak \mathbb{F}^{\times}\to \mathbb{C}$ ($k=1, \ldots, l$ and $i=1, \ldots, n$) and there exist linearly independent exponentials $m_{1}, \ldots, m_{l}\colon \mathbb{F}^{\times}\to \mathbb{C}$ such that
 \[
  f_{i}(x)= \sum_{k=1}^{l}P_{k, i}(x)m_{k}(x)
  \;
  \text{and}
  \;
  g_{i}(x)= \sum_{k=1}^{l}Q_{k, i}(x)m_{k}(x)
  \qquad
  \left(x\in \mathbb{F}^{\times}\right).
 \]

  Since the generalized polynomials on any finitely generated field are polynomials and the exponentials $m_{1}, \ldots, m_{k}$ are linearly independent, we can apply Theorem \ref{T_Poly_Ind}. This implies that 
   after substituting the above form the functions $f_{1}, \ldots, f_{n}$ and $g_{1}, \ldots, g_{n}$ and using that for each $\kappa \in \mathbb{N}$ and $k=1, \ldots, n$, we have
  \[
   m_{k}(x^{\kappa})= m_{k}(x)^{\kappa}
   \qquad
   \left(x\in \mathbb{F}^{\times}\right),
  \]
 especially
 \[
  \sum_{i=1}^{n}P_{k, i}(x^{p_{i}})Q_{k, i}(x^{q_{i}})=0
 \qquad
 \left(x\in \mathbb{F}^{\times}\right)
 \]
 follows for all $k=1, \ldots, l$. This tells us that it is enough to solve equation \eqref{Eq_inner} for generalized polynomials of the group $\mathbb{F}^{\times}$. In fact, we can prove the following.

\begin{theorem}\label{theorem_inner}
 Let $n$ be a positive integer, $\mathbb{F}\subset \mathbb{C}$ be a field and
 $p_{1}, \ldots, p_{n}, q_{1}, \allowbreak \ldots, \allowbreak q_{n}$ be fixed positive integers fulfilling conditions C(i)--C(iii).
 Assume that the additive functions $f_{1}, \ldots, f_{n}, g_{1}, \ldots, g_{n}\colon \mathbb{F}\to \mathbb{C}$ satisfy functional equation
 \eqref{Eq_inner}
 for each $x\in \mathbb{F}$. Then there exists a positive integer $l$, there exist exponentials $m_i\colon \mathbb{F}^{\times}\to \mathbb{C}$ and there are generalized polynomials $P_{i}, Q_{i}\colon \mathbb{F}^{\times}\to \mathbb{C}$ of degree at most $l$ such that
 \[
  f_{i}(x)= P_{i}(x)m_i(x)
  \qquad
  \text{and}
  \qquad
  g_{i}(x)= Q_{i}(x)m_i(x)
  \qquad
  \left(x\in \mathbb{F}^{\times}\right)
 \]
for each $i=1, \ldots, n$.
\end{theorem}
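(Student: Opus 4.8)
The plan is to start from the Laczkovich decomposition already recorded just before the statement: on every finitely generated subfield $\mathbb{K}\subseteq\mathbb{F}$ one may write $f_i=\sum_{k=1}^l P_{k,i}m_k$ and $g_i=\sum_{k=1}^l Q_{k,i}m_k$, where $m_1,\dots,m_l$ are distinct, linearly independent exponentials of $\mathbb{F}^\times$ and the $P_{k,i},Q_{k,i}$ are generalized polynomials; on $\mathbb{K}^\times$ the latter are classical polynomials in finitely many linearly independent additive functions, and the $m_k$ are additive (field homomorphisms), so that Theorem \ref{T_Poly_Ind} applies. Substituting these expansions into \eqref{Eq_inner} and using $m_k(x^\kappa)=m_k(x)^\kappa$ turns the left-hand side into
\[
\sum_{i=1}^n\sum_{k=1}^l\sum_{k'=1}^l P_{k,i}(x^{p_i})\,Q_{k',i}(x^{q_i})\,m_k(x)^{p_i}m_{k'}(x)^{q_i}=0 .
\]
I would then collect the terms according to the exponential $\prod_{j}m_j^{s_j}$ they carry, the contribution of the triple $(i,k,k')$ being indexed by the multi-index $s=p_i e_k+q_i e_{k'}$ with $|s|=N$, where $e_k$ is the $k$-th unit multi-index and C(ii) guarantees the common total degree $N$. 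Since distinct field homomorphisms are multiplicatively independent, distinct $s$ yield genuinely different monomials, so Theorem \ref{T_Poly_Ind} forces every coefficient, itself a generalized polynomial, to vanish.

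The heart of the argument is the bookkeeping of which triples feed a given $s$. For a mixed index $s=p_i e_k+q_i e_{k'}$ with $k\ne k'$, matching of supports shows that the only triples producing it are $(i,k,k')$ and a triple $(i_1,k',k)$ with $p_{i_1}=q_i$. By C(i) the value $p_i$ determines $i$ uniquely, and by C(iii) an index $i_1\ne i$ with $p_{i_1}=q_i$ cannot exist; hence, provided $p_i\ne q_i$, the single monomial $P_{k,i}(x^{p_i})Q_{k',i}(x^{q_i})$ must vanish identically. By Lemma \ref{L_lin_dep} the generalized polynomials on $\mathbb{K}^\times$ form an integral domain, and $R(x^{p})\equiv0$ forces $R\equiv0$ (compare homogeneous components, which scale by $p^{\deg}$), so one of $P_{k,i},Q_{k',i}$ is zero. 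Thus for such $i$ one cannot have $P_{k,i}\ne0$ and $Q_{k',i}\ne0$ simultaneously with $k\ne k'$. Writing $\mathrm{supp}(f_i)=\{k:P_{k,i}\ne0\}$ and $\mathrm{supp}(g_i)=\{k:Q_{k,i}\ne0\}$, and recalling that all functions are assumed non-zero, a short set-theoretic step gives $\mathrm{supp}(f_i)=\mathrm{supp}(g_i)=\{k_i\}$; setting $m_i:=m_{k_i}$ yields $f_i=P_{k_i,i}m_i$ and $g_i=Q_{k_i,i}m_i$ for every index with $p_i\ne q_i$.

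The point that resists this clean treatment — and which I expect to be the main obstacle — is the possible \emph{balanced} index $i^\ast$ with $p_{i^\ast}=q_{i^\ast}=N/2$, of which C(i) permits at most one. For it the partner triple $(i^\ast,k',k)$ carries the \emph{same} exponential as $(i^\ast,k,k')$, so collecting produces only the antisymmetric relation $P_{k,i^\ast}Q_{k',i^\ast}+P_{k',i^\ast}Q_{k,i^\ast}=0$ for $k\ne k'$, which by itself does not collapse the support (as the solutions $f_{i^\ast}=c_1m_a+c_2m_b$, $g_{i^\ast}=c_3m_a+c_4m_b$ with $c_1c_4+c_2c_3=0$ show). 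To finish I would combine three ingredients: this antisymmetric relation; the diagonal relations $\sum_i P_{k,i}(x^{p_i})Q_{k,i}(x^{q_i})=0$, into which the already-determined single-support form of the non-balanced indices can be substituted; and the additivity of $f_{i^\ast}$ and $g_{i^\ast}$ together with the standing hypothesis that no function vanishes. Passing, via the integral-domain property, to ratios in the fraction field of the polynomial ring, a case analysis of $\mathrm{supp}(f_{i^\ast})$ then excludes size $\ge2$ and matches the surviving exponentials of $f_{i^\ast}$ and $g_{i^\ast}$. Finally, since the distinct-exponential decomposition is unique by Lemma \ref{L_Lin_Ind}, the conclusion obtained on each finitely generated $\mathbb{K}$ is compatible across subfields and lifts to all of $\mathbb{F}$.
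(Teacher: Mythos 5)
Your main line is the same as the paper's: take the Laczkovich decomposition $f_i=\sum_k P_{k,i}m_k$, $g_i=\sum_k Q_{k,i}m_k$, substitute into \eqref{Eq_inner}, sort the terms by the exponential monomial $m_k^{p_i}m_{k'}^{q_i}$, kill each coefficient via Theorem \ref{T_Poly_Ind} on finitely generated subfields, and observe that for $k\ne k'$ the mixed monomial indexed by $p_ie_k+q_ie_{k'}$ is produced by a unique triple, which collapses $\mathrm{supp}(f_i)$ and $\mathrm{supp}(g_i)$ to a common singleton. The paper packages this as an extremal-index induction while you treat all mixed multi-indices simultaneously, but the mechanism is identical. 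In fact you are more careful than the paper: its induction step flatly asserts that $m_{j_1}^{p_{k+1}}m_{j_2}^{q_{k+1}}$ ``appears only at once,'' which is exactly what fails at a balanced index $p_{i^{*}}=q_{i^{*}}=N/2$ --- a configuration that C(i)--C(iii) do permit, since C(iii) only constrains pairs with $i\ne j$ --- and you correctly isolate this as the one genuine obstruction, down to the antisymmetric relation $P_{k,i^{*}}Q_{k',i^{*}}+P_{k',i^{*}}Q_{k,i^{*}}=0$.

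However, your plan for closing the balanced case --- combining the diagonal relations with a case analysis to ``exclude size $\ge 2$'' --- cannot succeed in general, because the statement as written fails there once $n\ge 3$. Take $\mathbb{F}=\mathbb{Q}(\sqrt{2})$ with nontrivial automorphism $\sigma$, $N=6$, $(p_i,q_i)=(1,5),(3,3),(4,2)$ (all of C(i)--C(iii) hold), and set $f_1(x)=x$, $g_1(x)=-x$, $f_2(x)=x+\sigma(x)$, $g_2(x)=x-\sigma(x)$, $f_3(x)=\sigma(x)$, $g_3(x)=\sigma(x)$. Then the left-hand side of \eqref{Eq_inner} equals $-x^{6}+\bigl(x^{6}-\sigma(x)^{6}\bigr)+\sigma(x)^{6}=0$, all six functions are nonzero and additive, yet $f_2$ and $g_2$ are sums of two distinct exponentials and, by the uniqueness in Lemma \ref{L_Lin_Ind}, cannot be written as $P\cdot m$ for a single exponential $m$. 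So the diagonal relations do not force the support of $f_{i^{*}}$ down to one element: the surviving diagonal contributions $P_{a,i^{*}}Q_{a,i^{*}}$ and $P_{b,i^{*}}Q_{b,i^{*}}$ can be cancelled against the unbalanced terms. The repair is to assume in addition $p_i\ne q_i$ for every $i$ (i.e., extend C(iii) to all pairs), under which your first two paragraphs already constitute a complete proof; without that hypothesis both your sketch and the paper's own induction step break at the same place.
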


\begin{proof}
 As we saw above, under the hypothesis of the lemma, if the functions $f_{1}, \ldots, f_{n}$ and $g_{1}, \ldots, g_{n}$ solve equation \eqref{Eq_inner}, then there exist a positive integer $l$, there are generalized polynomials $P_{k, i}, Q_{k, i}\colon \mathbb{F}^{\times}\to \mathbb{C}$ ($k=1, \ldots, l$ and $i=1, \ldots, n$) and there exist linearly independent exponentials $m_{1}, \ldots, m_{l}\colon \mathbb{F}^{\times}\to \mathbb{C}$ such that
 \[
  f_{i}(x)= \sum_{k=1}^{l}P_{k, i}(x)m_{k}(x)
  \;
  \text{and}
  \;
  g_{i}(x)= \sum_{k=1}^{l}Q_{k, i}(x)m_{k}(x)
  \qquad
  \left(x\in \mathbb{F}^{\times}\right).
 \]
Assume to the contrary that $l\geq 2$.

Let
 \[
  S= \left\{p_{1}, \ldots, p_{n} \right\} \cup \left\{ q_{1}, \ldots, q_{n}\right\}.
 \]
Then due to conditions C(i)--C(iii) we have $\max S= \max \left\{ p_{n}, q_{1} \right\}$. Similarly as in Lemma \ref{lem_decop_inner}, without the loss of generality $p_{n}>q_{1}$ can be assumed, otherwise we follow a similar argument.
By our assumption $l\geq 2$, this yields that there exist different exponential terms in $f_{1}$ and $g_{1}$ with nonzero polynomial coefficients. For the sake of simplicity, suppose that these different exponentials are $m_{1}$ and $m_{2}$. Since $\max S= p_{1}$, the term $m_{1}^{p_{1}}m_{2}^{q_{1}}$ appears only in $f_{1}(x^{p_{1}})g_{1}(x^{q_{1}})$ while expanding equation \eqref{Eq_inner}.
Since generalized polynomials $P_{k,i}$ and exponentials $m_k$ satisfies the conditions of Theorem \ref{T_Poly_Ind} for every finitely generated subfield of $\mathbb{F}$,
the coefficient of the above-mentioned term which is $P_{1, 1}(x^{p_{1}})Q_{1, 1}(x^{q_{1}})$ has to vanish on $\mathbb{F}$. From this we can deduce that $P_{1, 1}$ or $Q_{1, 1}$ is identically zero, contrary to our assumption. Thus
\[
 f_{1}(x)= P_{1}(x)m_{1}(x)
 \qquad
 \text{and}
 \qquad
 g_{1}(x)= Q_{1}(x)m_{1}(x)
 \qquad
 \left(x\in \mathbb{F}\right),
\]
with appropriate generalized polynomials $P_{1}, Q_{1}\colon \mathbb{F}^{\times}\to \mathbb{C}$ and exponential $m_{1}\colon \mathbb{F}^{\times}\to \mathbb{C}$.

Suppose now that there is a positive integer $k$, less than $n$ such that for all $i=1, \ldots, k$ we have
\[
 f_{i}(x)= P_{i}(x)m_{i}(x)
 \;
 \text{and}
 \;
 g_{i}(x)= Q_{i}(x)m_{i}(x)
 \qquad
 \left(x\in \mathbb{F}^{\times}\right).
\]
Assume that in the representation of $f_{k+1}$ and $g_{k+1}$ there are different exponentials with nonzero polynomial coefficients, say $m_{j_{1}}$ and $m_{j_{2}}$. Observe that while expanding equation \eqref{Eq_inner}, the term $m_{j_{1}}^{p_{k+1}}m_{j_{2}}^{q_{k+1}}$ appears only at once, namely in the product $f_{k+1}(x^{p_{k+1}})g_{k+1}(x^{q_{k+1}})$. Again, due to Theorem \ref{T_Poly_Ind},
we deduce that the appropriate polynomial term, that is,
\[
P_{k+1, j_{1}}(x^{p_{k+1}})Q_{k+1, j_{2}}(x^{q_{k+1}})
\]
has to vanish. This proves that necessarily
\[
 f_{k+1}(x)= P_{k+1}(x)m_{k+1}(x)
 \;
 \text{and}
 \;
 g_{k+1}= Q_{k+1}(x)m_{k+1}(x)
 \qquad
 \left(x\in \mathbb{F}^{\times}\right)
\]
hold.
This shows that there exist exponentials $m_{1}, \ldots, m_{n}\colon \mathbb{F}^{\times}\to \mathbb{C}$ and generalized polynomials $P_{1}, \ldots, P_{n}$ and $Q_{1}, \ldots, Q_{n}$ on the group $\mathbb{F}^{\times}$ such that for all $i=1, \ldots, n$
\[
 f_{i}(x)= P_{i}(x)m_{i}(x)
 \qquad
 \text{and}
 \qquad
 g_{i}= Q_{i}(x)m_{i}(x)
 \qquad
 \left(x\in \mathbb{F}^{\times}\right).
\]
\end{proof}

\begin{remark}\label{r_red}
Due to conditions C(i)--C(iii), equation \eqref{Eq_inner} has the form
\begin{multline*}
 0
 =
 \sum_{i=1}^{n}f_{i}(x^{p_{i}})g_{i}(x^{q_{i}})
 \\
 =
 \sum_{i=1}^{n}P_{i}(x^{p_{i}})m_{i}(x)^{p_{i}}Q_{i}(x^{q_{i}})m_{i}(x)^{q_{i}}
 =
 \sum_{i=1}^{n}P_{i}(x^{p_{i}})Q_{i}(x^{q_{i}})m_{i}(x)^{N}
 \\
 \left(x\in \mathbb{F}^{\times}\right).
\end{multline*}
If the exponentials appearing on the right hand side would be different, then by Theorem \ref{T_Poly_Ind}, their coefficients would be zero. This implies however that there exists a proper subset
$J\subset \left\{ 1, \ldots, n\right\}$ such that
\begin{equation}\label{reduc}
 \sum_{j\in J}f_{j}(x^{p_{j}})g_{j}(x^{q_{j}})=0
 \;
 \text{as well as}
 \;
 \sum_{j\notin J}f_{j}(x^{p_{j}})g_{j}(x^{q_{j}})=0
 \qquad
 \left(x\in \mathbb{F}^{\times}\right).
\end{equation}
\end{remark}

This leads to the following definition of irreducible solutions.
\begin{definition}
A system of solutions $\left\{ f_{1}, \ldots, f_{n}, g_{1}, \ldots, g_{n}\right\}$ of equation \eqref{Eq_inner} is called \emph{irreducible} if it does not satisfy a sub-term of
\eqref{Eq_inner}. Otherwise, we say that a system of solutions is \emph{reducible}.
\end{definition}

Clearly, a system of solutions $\left\{ f_{1}, \ldots, f_{n}, g_{1}, \ldots, g_{n}\right\}$  of \eqref{Eq_inner} which fulfills \eqref{reduc} is a reducible solution. On the other hand, the argument in Remark \ref{r_red} shows that every solution of \eqref{Eq_inner} can be given as a sum of irreducible solutions of disjoint sub-terms of \eqref{Eq_inner}.
Therefore, we restrict ourselves to the irreducible case, since every reducible solution can be deduced as a sum of irreducible solutions.

\begin{cor} \label{cor_inner}
Under the conditions of Theorem \ref{theorem_inner}, suppose that system of functions
$\left\{ f_{1}, \ldots, f_{n}, g_{1}, \ldots, g_{n}\right\}$  is an irreducible solution of equation \eqref{Eq_inner}.
Then there exists an exponential $m\colon \mathbb{F}^{\times}\to \mathbb{C}$ and there are generalized polynomials $P_{i}, Q_{i}\colon \mathbb{F}^{\times}\to \mathbb{C}$ such that for each $i=1, \ldots, n$
 \begin{equation}\label{eq_irred1}
  f_{i}(x)= P_{i}(x)m(x)
  \qquad
  \text{and}
  \qquad
  g_{i}(x)= Q_{i}(x)m(x)
  \qquad
  \left(x\in \mathbb{F}^{\times}\right).
\end{equation}

In other words, for each $i=1, \ldots, n$ there exists higher order derivations $D_{i}, \widetilde{D_{i}}\colon \mathbb{F}\to \mathbb{C}$ such that
\[
 f_{i}(x)\sim D_{i}(x)
 \qquad
 \text{and}
 \qquad
 g_{i}(x)\sim \widetilde{D_{i}}(x)
 \qquad
 \left(x\in \mathbb{F}^{\times}\right),
\]
where $\sim$ in the latter two equations is the equivalence relation  defined in Remark \ref{rem2.4}.
\end{cor}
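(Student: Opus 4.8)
The plan is to take Theorem \ref{theorem_inner} as the starting point: it already supplies, for each $i$, representations $f_i = P_i m_i$ and $g_i = Q_i m_i$ with exponentials $m_i$ and generalized polynomials $P_i, Q_i$ on $\mathbb{F}^{\times}$, so the entire task is to collapse the possibly different exponentials $m_1, \ldots, m_n$ into a single $m$ and then to read off the derivation structure. First I would substitute these representations into \eqref{Eq_inner}, using that each $m_i$ is multiplicative together with condition C(ii), exactly as in Remark \ref{r_red}, to obtain $\sum_{i=1}^{n} P_i(x^{p_i}) Q_i(x^{q_i}) m_i(x)^{N} = 0$ on $\mathbb{F}^{\times}$. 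Grouping the indices according to the value of the exponential $m_i^{N}$ and invoking Theorem \ref{T_Poly_Ind} on every finitely generated subfield (where $P_i, Q_i$ are genuine polynomials), the partial sum attached to each distinct value of $m_i^{N}$ must vanish separately; multiplying it back by its common exponential returns a genuine sub-term of \eqref{Eq_inner} that is identically zero. Since the solution is irreducible, only one such group can occur, so all the $m_i^{N}$ coincide.

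The crux is then to upgrade ``all $m_i^{N}$ coincide'' to ``all $m_i$ coincide'', and this is where I expect the main difficulty. The key structural input is that each $m_i$ is in fact the restriction to $\mathbb{F}^{\times}$ of a field embedding $\mathbb{F} \to \mathbb{C}$. This follows because the variety (translation-invariant hull) of an additive function consists again of additive functions — indeed $\tau_a f_i(x) = f_i(xa)$ is additive whenever $f_i$ is — while the exponential $m_i$ of $f_i = P_i m_i$ lies in that variety; hence $m_i$ is simultaneously additive and multiplicative, i.e. a field embedding. Once the $m_i$ are known to be embeddings, the equality $m_i^{N} = m_j^{N}$ forces $m_i = m_j$: writing $\zeta = m_i/m_j \colon \mathbb{F}^{\times} \to \mu_{N}$, the additivity of both $m_i$ and $m_j$ turns $\zeta(x+y)\bigl(m_j(x)+m_j(y)\bigr) = \zeta(x) m_j(x) + \zeta(y) m_j(y)$, together with $\zeta(1)=1$, into $\zeta \equiv 1$. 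This last step is elementary but slightly delicate, since one must play additivity against the finiteness of $\mu_N$ rather than against any linear independence over $\mathbb{C}$. This produces the common exponential $m$ and hence the first display \eqref{eq_irred1}.

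For the second assertion I would pass from $m$ to an automorphism of $\mathbb{C}$: since $m$ is an embedding of the subfield $\mathbb{F}$ into $\mathbb{C}$, it extends to some $\widehat{m} \in \mathrm{Aut}(\mathbb{C})$. Then $\widehat{m}^{-1} \circ f_i = j \cdot (\widehat{m}^{-1} \circ P_i)$, where $j$ denotes the identity map, is additive and equal to $j$ times a generalized polynomial, so by the equivalence (i)$\Leftrightarrow$(iii) of Theorem \ref{theorem_KisLac} it is a higher order derivation $D_i$; likewise $\widehat{m}^{-1}\circ g_i =: \widetilde{D_i}$, which yields $f_i \sim D_i$ and $g_i \sim \widetilde{D_i}$ in the sense of Remark \ref{rem2.4}. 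The one point that needs care is the condition $D_i(1)=0$ in clause (iii): it amounts to the vanishing of the constant term of $P_i$ (equivalently $f_i(1)=0$), and when this term is nonzero the corresponding summand is a scalar multiple of the homomorphism $m$ itself — reflecting the fact, announced in the introduction, that the solutions are in general compositions of higher order derivations \emph{and} homomorphisms. I would therefore split off that term and apply Theorem \ref{theorem_KisLac} to the remainder. Overall I expect the field-embedding argument of the second paragraph to be the genuinely substantive step, with the rest amounting to bookkeeping on top of Theorems \ref{theorem_inner}, \ref{T_Poly_Ind} and \ref{theorem_KisLac}.
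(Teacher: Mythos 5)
Your proposal is correct and its skeleton is the same as the paper's: collapse the exponentials of Theorem \ref{theorem_inner} to a single $m$ via the expansion $\sum_i P_i(x^{p_i})Q_i(x^{q_i})m_i(x)^N=0$ and irreducibility, then compose with an automorphism extending $m$ and invoke Theorem \ref{theorem_KisLac}. The paper's own proof is essentially two sentences (cite Remark \ref{r_red}, cite Lemma \ref{lem_equ_inner} and Theorem \ref{theorem_KisLac}), and the two places where you add substance are exactly the places it is terse. First, your worry about upgrading $m_i^N=m_j^N$ to $m_i=m_j$ does not arise if one applies Theorem \ref{T_Poly_Ind} as stated, since that theorem kills the coefficient of each \emph{multi-index} $s$ separately, so the grouping is by the exponential $m_i$ itself rather than by the value of the function $m_i^N$; on the other hand, the very validity of Theorem \ref{T_Poly_Ind} rests on the fact that distinct additive exponentials (i.e.\ field embeddings) have functionally independent power monomials, which is precisely what your character argument $\zeta=m_i/m_j\colon\mathbb{F}^{\times}\to\mu_N$, $\zeta\equiv 1$ establishes — so you are re-proving, self-containedly, the ingredient the paper imports as a black box. (Your observation that $m_i$ lies in the variety of the additive function $f_i$ and is therefore itself additive is also the unstated justification for the paper's step ``we can suppose the exponential $m$ is the identity'', which needs $m$ to extend to an automorphism of $\mathbb{C}$.) Second, your caveat about $D(1)=0$ is a genuine imprecision in the corollary as printed: if the constant term of $P_i$ is nonzero, then $\widehat{m}^{-1}\circ f_i$ is a differential operator with a nonzero multiple of the identity, hence not an element of $\mathscr{D}_n(\mathbb{F})$ for any $n$ by Theorem \ref{theorem_KisLac}(iii); the paper silently absorbs this homomorphism component into the phrase ``higher order derivation'', whereas you correctly split it off. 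So your proof is sound and, if anything, more careful than the original.
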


\begin{proof}
By Remark \ref{r_red}, all of the exponentials $m_i$ have to be the same in the description of the solutions of Theorem \ref{theorem_inner}. Therefore, equation \eqref{eq_irred1} describes the irreducible solutions of \eqref{Eq_inner}.

Using Lemma \ref{lem_equ_inner}, solutions of equation \eqref{Eq_inner} are enough to be determined up to the equivalence relation $\sim$ defined in Remark \ref{rem2.4}. Accordingly, we can suppose that the exponential $m$ in the above representation is the identity mapping. Hence, in view of Theorem \ref{theorem_KisLac} we get that the functions $f_{1}, \ldots, f_{n}$ as well as $g_{1}, \ldots, g_{n}$ are (or more precisely, are equivalent to) higher order derivations, as we stated.
\end{proof}

\subsection{The order of higher order derivation solutions}

Every higher order derivation on $\mathbb{F}$ is a differential operator on any finitely generated subfield of $\mathbb{F}$ (see Theorem \ref{theorem_KisLac} and \cite{KisLac18}). Hence on these fields the solutions are differential operators. Moreover, if the solutions on any finitely generated subfield of $\mathbb{F}$  are differential operators of order at most $n$, then every solution on  $\mathbb{F}$ is a derivation of order $n$. From now on, instead of finding solutions as higher order derivations we are looking for differential operators as solutions.

For this purpose, our next aim is to understand the arithmetic of the composition of derivations of the form $d_1\circ \dots \circ d_r$ that are building blocks of differential operators. First we show that there is a natural form of composition of derivations that can be taken as a standard basis.

For this target, the notion of moment function sequences turn out to be useful. Here we follow \cite{FecGseSze21}.
\noindent
A \emph{composition} of a nonnegative integer $n$ is a sequence of nonnegative integers $\alpha= \left(\alpha_{k}\right)_{k\in \mathbb{N}}$ such that
\[
 n= \sum_{k=1}^{\infty}\alpha_{k}.
\]
For a positive integer $r$, an \emph{$r$-composition} of a nonnegative integer $n$ is a composition
$\alpha= \left(\alpha_{k}\right)_{k\in \mathbb{N}}$ with
$\alpha_{k}=0$ for $k>r$.

Given a sequence of variables $x=(x_{k})_{k\in \mathbb{N}}$ and compositions $\alpha= \left(\alpha_{k}\right)_{k\in \mathbb{N}}$ and
$\beta= \left(\beta_{k}\right)_{k\in \mathbb{N}}$ we define
\[
 \alpha!=\prod_{k=1}^{\infty}\alpha_{k},\quad \lvert \alpha\rvert = \sum_{k=1}^{\infty}\alpha_{k}, \quad
 x^{\alpha}=\prod_{k=1}^{\infty}x_{k}^{\alpha_{k}},\quad \binom{\alpha}{\beta}= \prod_{k=1}^{\infty}\binom{\alpha_{k}}{\beta_{k}}.
\]
Furthermore,
$\beta \leq \alpha$ means that $\beta_{k}\leq \alpha_{k}$ for all $k\in \mathbb{N}$ and
$\beta < \alpha$ stands for $\beta \leq \alpha$  and $\beta \neq \alpha$.

\begin{definition}
Let $G$ be a commutative group, $r$ a positive integer, and for each multi-index $\alpha$ in $\mathbb{N}^r$
let $f_{\alpha}\colon G\to \mathbb{C}$ be a continuous function. We say that $(f_{\alpha})_{\alpha \in \mathbb{N}^{r}}$ is a \emph{generalized moment sequence of rank $r$}, if
\begin{equation}\label{moment}
f_{\alpha}(x+y)=\sum_{\beta\leq \alpha} \binom{\alpha}{\beta} f_{\beta}(x)f_{\alpha-\beta}(y)
\end{equation}
holds whenever $x,y$ are in $G$. The function $f_0$, where $0$ is the zero element in $\mathbb{N}^r$, is called the \emph{generating function} of the sequence.
\end{definition}

\begin{theorem}
  Let $G$ be a commutative group, $r$ a positive integer, and for each $\alpha$ in $\mathbb{N}^{r}$
 let $f_{\alpha}\colon G\to \mathbb{C}$ be a function. If the sequence of functions
 $(f_{\alpha})_{\alpha\in \mathbb{N}^{r}}$ forms a generalized moment sequence of rank $r$, then there exists an
 exponential $m\colon G\to \mathbb{C}$ and a sequence of complex-valued additive functions $a= (a_{\alpha})_{\alpha\in \mathbb{N}^{r}}$
 such that for every multi-index $\alpha$ in $\mathbb{N}^{r}$ and $x$ in $G$ we have
 \[
  f_{\alpha}(x)=B_{\alpha}(a(x))m(x),
 \]
 where $B_{\alpha}$ denotes the \emph{multivariate Bell polynomial} corresponding to the multi-index $\alpha$.
\end{theorem}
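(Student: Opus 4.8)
The plan is to translate the convolution identity \eqref{moment} into a multiplicativity statement for a formal exponential generating function, and then to read off the $f_{\alpha}$ by taking a formal logarithm.

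First I would isolate the generating function $m := f_{0}$. The instance $\alpha = 0$ of \eqref{moment} is $f_{0}(x+y) = f_{0}(x) f_{0}(y)$, so $m$ is an exponential. I would dispose of the degenerate case first: if $m \equiv 0$, then an induction on $\lvert \alpha \rvert$, using \eqref{moment} with $y$ equal to the identity of $G$ (so that the term $\beta = \alpha$ carries the factor $f_{0}(0) = 0$ while all other terms vanish by the inductive hypothesis), forces every $f_{\alpha}$ to be identically zero, and the claimed formula holds trivially with $a \equiv 0$ and $B_{0} = 1$. Otherwise $m(0) = 1$ and, since an exponential on a group never vanishes once it is not identically zero, $m$ is nowhere zero. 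I may then set $g_{\alpha} := f_{\alpha}/m$; substituting $f_{\alpha} = g_{\alpha} m$ into \eqref{moment} and cancelling $m(x+y) = m(x) m(y)$ shows that $(g_{\alpha})_{\alpha \in \mathbb{N}^{r}}$ is again a generalized moment sequence, now normalised so that $g_{0} \equiv 1$. It therefore suffices to establish $g_{\alpha}(x) = B_{\alpha}(a(x))$ and to multiply through by $m$ at the end.

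Next I would pass to formal indeterminates $t = (t_{1}, \ldots, t_{r})$ and form
\[
 F(x, t) = \sum_{\alpha \in \mathbb{N}^{r}} g_{\alpha}(x) \frac{t^{\alpha}}{\alpha!} \in \mathbb{C}[[t_{1}, \ldots, t_{r}]].
\]
The decisive observation is that the moment identity \eqref{moment} for $(g_{\alpha})$ is, coefficient by coefficient, exactly the Cauchy product identity $F(x+y, t) = F(x, t) F(y, t)$. Because $g_{0} \equiv 1$, the series $F(x, t)$ has constant term $1$ and hence admits a formal logarithm; writing
\[
 \log F(x, t) = \sum_{\alpha \neq 0} a_{\alpha}(x) \frac{t^{\alpha}}{\alpha!},
\]
the multiplicativity of $F$ becomes $\log F(x+y, t) = \log F(x, t) + \log F(y, t)$, and comparing the coefficients of $t^{\alpha}/\alpha!$ gives $a_{\alpha}(x+y) = a_{\alpha}(x) + a_{\alpha}(y)$. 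Thus each $a_{\alpha}$ is additive (and I set $a_{0} \equiv 0$).

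Finally I would exponentiate: $F(x, t) = \exp\left( \sum_{\alpha \neq 0} a_{\alpha}(x) t^{\alpha}/\alpha! \right)$, and extracting the coefficient of $t^{\alpha}/\alpha!$ recovers $g_{\alpha}(x)$ as precisely the multivariate Bell polynomial $B_{\alpha}$ evaluated at the sequence $a(x) = (a_{\beta}(x))_{\beta}$, since this coefficient-of-a-formal-exponential description is the defining property of $B_{\alpha}$. Undoing the normalisation yields $f_{\alpha}(x) = B_{\alpha}(a(x)) m(x)$. The one genuinely delicate point, rather than the routine power-series bookkeeping, is to confirm that the coefficients produced by the formal exponential match the multivariate Bell polynomials in the exact normalisation adopted in \cite{FecGseSze21}; everything else is algebra inside $\mathbb{C}[[t_{1}, \ldots, t_{r}]]$ and needs no analytic or continuity hypotheses.
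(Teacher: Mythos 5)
Your argument is correct. Note first that the paper itself gives no proof of this theorem: it is imported verbatim from the cited reference \cite{FecGseSze21}, so there is no in-paper proof to compare against. Your route --- read off from the $\alpha=0$ instance of \eqref{moment} that $f_0=m$ is an exponential, split off the degenerate case $m\equiv 0$ (which is legitimate here because the paper's definition of exponential does not exclude the zero function), normalise to $g_\alpha=f_\alpha/m$ using that a non-identically-zero exponential never vanishes, recognise the moment identity as multiplicativity of the formal series $F(x,t)=\sum_\alpha g_\alpha(x)t^\alpha/\alpha!$, take the formal logarithm to obtain additive coefficient functions $a_\alpha$, and exponentiate back --- is the standard generating-function proof, and each step is sound: the induction killing all $f_\alpha$ when $m\equiv 0$ works because $\beta<\alpha$ forces $\lvert\beta\rvert<\lvert\alpha\rvert$, and the formal $\log$/$\exp$ manipulations are valid since $F(x,\cdot)$ has constant term $1$. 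The only point you flag as delicate, matching the coefficients of $\exp\bigl(\sum_{\alpha\neq 0}a_\alpha(x)t^\alpha/\alpha!\bigr)$ with the multivariate Bell polynomials, is indeed just the defining normalisation of $B_\alpha$, so nothing is missing.
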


\begin{remark}\label{rembas}
 It is well-known that every polynomial $P\colon \mathbb{C}^r\to \mathbb{C}$ ($r\in \mathbb{N}$) can be given as a linear combination of Bell polynomials $B_{\alpha}$, where $\alpha\in \mathbb{N}^{r}$.
 Hence, momentum generating functions generate the exponential polynomial  functions on $G$. By Lemma \ref{L_lin_dep}, polynomials of the form $B_{\alpha}\circ a$ are linearly independent over $\mathbb{C}$.
\end{remark}

\begin{lemma}\label{L:dermom}
 Let $\mathbb{F}\subset \mathbb{C}$ be a field, $r$ be a positive integer and
 $d_{1}, \ldots, d_{r}\colon \mathbb{F}\to \mathbb{F}$ be linearly independent derivations. For all multi-index $\alpha\in \mathbb{N}^{r}$, $\alpha= \left(\alpha_{1}, \ldots, \alpha_{r}\right)$ define the function $\varphi_{\alpha}\colon \mathbb{F}\to \mathbb{C}$ by
 \begin{multline*}
  \varphi_{\alpha}(x)= d^{\alpha}(x)=
  d_{1}^{\alpha_{1}} \circ \cdots \circ d_{r}^{\alpha_{r}}(x)
  \\
  =
\underbrace{d_{1}\circ \cdots \circ d_{1}}_\text{$\alpha_{1}$ times}
\circ \cdots \circ \underbrace{d_{r}\circ \cdots \circ d_{r}}_{\text{$\alpha_{r}$ times}}(x) \\
\left(x\in \mathbb{F}^{\times}\right).
\end{multline*}
Then $(\varphi_{\alpha})_{\alpha \in \mathbb{N}^{r}}$ is a generalized moment sequence of rank $r$ on the commutative group $\mathbb{F}^{\times}$ and the generating function of the sequence is the identity function.
\end{lemma}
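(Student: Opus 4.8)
The plan is to verify directly the defining functional equation \eqref{moment} for the sequence $(\varphi_{\alpha})_{\alpha\in\mathbb{N}^r}$, bearing in mind that the group in question is the \emph{multiplicative} group $\mathbb{F}^{\times}$, so the group operation written additively in \eqref{moment} is here the field multiplication. Concretely, I must show
\[
\varphi_{\alpha}(xy)=\sum_{\beta\leq\alpha}\binom{\alpha}{\beta}\varphi_{\beta}(x)\,\varphi_{\alpha-\beta}(y)\qquad\left(x,y\in\mathbb{F}^{\times}\right)
\]
for every $\alpha\in\mathbb{N}^r$. The claim about the generating function is immediate, since $\varphi_{0}=d_{1}^{0}\circ\cdots\circ d_{r}^{0}$ is the empty composition, i.e.\ the identity map on $\mathbb{F}$.

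The first ingredient is the classical higher-order Leibniz rule for a \emph{single} derivation $d$: for every nonnegative integer $m$,
\[
d^{m}(uv)=\sum_{k=0}^{m}\binom{m}{k}d^{k}(u)\,d^{m-k}(v)\qquad\left(u,v\in\mathbb{F}\right),
\]
which I would record as an auxiliary claim and establish by a one-line induction on $m$, the step being a single application of $d(uv)=d(u)v+ud(v)$ together with Pascal's rule. The main identity I would then prove by induction on the rank $r$. For $r=1$ it is exactly this single-variable rule. For the inductive step, write $\alpha=(\alpha_{1},\alpha')$ with $\alpha'=(\alpha_{2},\ldots,\alpha_{r})$, set $\psi'_{\beta'}=d_{2}^{\beta_{2}}\circ\cdots\circ d_{r}^{\beta_{r}}$ for $\beta'\leq\alpha'$, and observe that $d_{1}$ is the \emph{outermost} factor of $\varphi_{\alpha}$, so that $\varphi_{\alpha}=d_{1}^{\alpha_{1}}\circ\psi'_{\alpha'}$ and, more generally, $d_{1}^{k}\circ\psi'_{\beta'}=\varphi_{(k,\beta')}$. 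Applying the inductive hypothesis to the $(r-1)$ derivations $d_{2},\ldots,d_{r}$ yields the moment identity for $\psi'_{\alpha'}(xy)$; then, using the $\mathbb{F}$-linearity of $d_{1}^{\alpha_{1}}$, I pull $d_{1}^{\alpha_{1}}$ inside the resulting finite sum and apply the single-variable rule to each product $\psi'_{\beta'}(x)\,\psi'_{\alpha'-\beta'}(y)$, whose factors lie in $\mathbb{F}$. Finally, re-indexing via $\beta=(k,\beta')$ and the multiplicativity $\binom{\alpha}{\beta}=\binom{\alpha_{1}}{k}\binom{\alpha'}{\beta'}$ collapses the double sum into $\sum_{\beta\leq\alpha}\binom{\alpha}{\beta}\varphi_{\beta}(x)\varphi_{\alpha-\beta}(y)$, as required.

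The step I expect to require the most care is the bookkeeping in this last re-indexing, together with the conceptual point that $d_{1}$ must be peeled off from the \emph{outside}, so that the inner composition $\psi'_{\alpha'}$ is genuinely a rank-$(r-1)$ object to which the inductive hypothesis applies and so that $d_{1}^{k}\circ\psi'_{\beta'}$ is again one of the $\varphi$'s. It is worth remarking that linear independence of $d_{1},\ldots,d_{r}$ is not needed for the moment-sequence property itself: each $\varphi_{\alpha}$ is a composition of additive maps, and the proof uses only the Leibniz rule and linearity, so the displayed functional equation holds for arbitrary derivations; the independence hypothesis becomes relevant only when the sequence is later used as a basis (cf.\ Remark \ref{rembas}).

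An equivalent and slightly slicker route would package the identity into the generating-function statement $\Phi(xy;t)=\Phi(x;t)\Phi(y;t)$, where $\Phi(x;t)=\sum_{\alpha}\frac{t^{\alpha}}{\alpha!}\varphi_{\alpha}(x)=\bigl(\exp(t_{1}d_{1})\circ\cdots\circ\exp(t_{r}d_{r})\bigr)(x)$, viewed in the appropriate formal power series ring. Here one observes that each $\exp(t_{i}d_{i})$ is a ring homomorphism and that a composition of homomorphisms is again a homomorphism, so $\Phi(\cdot;t)$ is multiplicative; comparing coefficients of $t^{\alpha}$ then reproduces the moment identity. I would present the induction as the primary argument, since it is entirely self-contained, and mention this generating-function reformulation as motivation for why the identity is precisely the moment-sequence condition.
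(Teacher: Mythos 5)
Your proof is correct and essentially coincides with the paper's: both verify the moment identity by iterating the single-variable higher-order Leibniz rule across the composition $d_{1}^{\alpha_{1}}\circ\cdots\circ d_{r}^{\alpha_{r}}$ (the paper inducts on $\lvert\alpha\rvert$ and expands from the innermost factor $d_{r}^{\alpha_{r}}$ outward, while you induct on the rank $r$ and peel $d_{1}^{\alpha_{1}}$ from the outside, which is only an organizational difference). One small correction: derivations are not $\mathbb{F}$-linear, but the coefficients $\binom{\alpha'}{\beta'}$ you pull through $d_{1}^{\alpha_{1}}$ are integers, so additivity of $d_{1}^{\alpha_{1}}$ is all that is needed and the step stands.
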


\begin{proof}
 Let $\mathbb{F}\subset \mathbb{C}$ be a field, $r$ be a positive integer and
 $d_{1}, \ldots, d_{r}\colon \mathbb{F}\to \mathbb{F}$ be linearly independent derivations. For all multi-index $\alpha\in \mathbb{N}^{r}$, $\alpha= \left(\alpha_{1}, \ldots, \alpha_{r}\right)$ define the function $\varphi_{\alpha}\colon \mathbb{F}\to \mathbb{C}$ as in the lemma.
 We prove the statement by induction of the length of the multi-index $\alpha$.
 Assume that $\alpha\in \mathbb{N}^{r}$ and $\lvert \alpha\rvert=1$. Then there exists $i\in \left\{1, \ldots r \right\}$ such that $\alpha_{i}=1$ and $\alpha_{j}=0$ for $j\neq i$ and
 \begin{multline*}
  \varphi_{\alpha}(xy)=
  d_{i}^{\alpha_{i}}(xy)=
  d_{i}(xy)= xd_{i}(y)+yd_{i}(x)
  \\
  =
  \varphi_{0}(x)\varphi_{\alpha}(y)+\varphi_{0}(y)\varphi_{\alpha}(x)=
  \sum_{\beta \leq \alpha}\binom{\alpha}{\beta} \varphi_{\beta}(x)\varphi_{\alpha-\beta}(y)
  \\
  \left(x, y\in \mathbb{F}^{\times}\right).
 \end{multline*}
Let now $\alpha\in \mathbb{N}^{r}$ and assume that the statement is true for all multi-indices $\beta\in \mathbb{N}^{r}$ with $\beta < \alpha$.
Then
\begin{multline*}
 \varphi_{\alpha}(xy)=  d^{\alpha}(xy)=
  d_{1}^{\alpha_{1}} \circ \cdots \circ d_{r}^{\alpha_{r}}(xy)=
\underbrace{d_{1}\circ \cdots d_{1}}_\text{$\alpha_{1}$ times}
\circ \cdots \circ \underbrace{d_{r}\circ \cdots \circ d_{r}}_{\text{$\alpha_{r}$ times}}(xy)
\\
=
\underbrace{d_{1}\circ \cdots \circ d_{1}}_\text{$\alpha_{1}$ times}
\circ \cdots \circ \underbrace{d_{r-1}\circ \cdots \circ d_{r-1}}_\text{$\alpha_{r-1}$ times}
\left(d_{r}^{\alpha_{r}}(xy)\right)
\\
=
\underbrace{d_{1}\circ \cdots \circ d_{1}}_\text{$\alpha_{1}$ times}
\circ \cdots \circ \underbrace{d_{r-1}\circ \cdots \circ d_{r-1}}_\text{$\alpha_{r-1}$ times}
\left(\sum_{\beta_r=0}^{\alpha_{r}}\binom{\alpha_{r}}{\beta_r}d_{r}^{\beta_r}(x)\cdot d^{\alpha_{r}-\beta_r}_{r}(y)\right)
\\=
\sum_{\beta_r=0}^{\alpha_{r}}\binom{\alpha_{r}}{\beta_r}\underbrace{d_{1}\circ \cdots \circ d_{1}}_\text{$\alpha_{1}$ times}
\circ \cdots \circ \underbrace{d_{r-1}\circ \cdots \circ d_{r-1}}_\text{$\alpha_{r-1}$ times}
\left(d_{r}^{\beta_r}(x)\cdot d^{\alpha_{r}-\beta_r}_{r}(y)\right)
\\=
\sum_{\beta_1=0}^{\alpha_{1}}\dots\sum_{\beta_{r-1}=0}^{\alpha_{r-1}}\sum_{\beta_r=0}^{\alpha_{r}}\binom{\alpha_{1}}{\beta_1}\dots \binom{\alpha_{r-1}}{\beta_{r-1}}\binom{\alpha_{r}}{\beta_r} \times
\\
\times d_{1}^{\beta_{1}} \circ \cdots \circ d_{r}^{\beta_{r}}(x)~\cdot~ d_{1}^{\alpha_{1}-\beta_1} \circ \cdots \circ d_{r}^{\alpha_{r}-\beta_r}(y)
\\
=
\sum_{\beta \leq \alpha}\binom{\alpha}{\beta}\varphi_{\beta}(x)\varphi_{\alpha-\beta}(y)
\qquad
\left(x, y\in \mathbb{F}^{\times}\right).
\end{multline*}
\end{proof}

\begin{cor} \label{cor:alg_ind}
By Lemma \ref{L:dermom} and Remark \ref{rembas} implies that the functions $d^{\alpha}=
  d_{1}^{\alpha_{1}} \circ \cdots \circ d_{r}^{\alpha_{r}}$ constitute a basis of the differential operators in $\mathbb{F}^{\times}$. Since every $d^{\alpha}$ is additive, by Lemma \ref{L_lin_dep},
  all elements of the system $\{d^{\alpha}(x)\}$, where $\alpha\in \cup_{r\in \mathbb{N}}\mathbb{N}^r$ are algebraically independent.

   A consequence of the algebraic independence of the elements of $d^{\alpha}$, where $\alpha\in \cup_{r\in \mathbb{N}}\mathbb{N}^r$ is the following. Let $P\in \mathbb{C}[x_1, \dots, x_n]$ be a polynomial, $d_1, \dots d_r$ be derivations as in Lemma \ref{L:dermom} and $\alpha_1, \dots, \alpha_n \in  \cup_{r\in \mathbb{N}}\mathbb{N}^r$. Then the
   following polynomial form
    \[
     P(d^{\alpha_1}(x), \ldots, d^{\alpha_n}(x))=0
    \qquad
     \left(x\in G\right)
    \]
    holds if and only if
    \[
     P(\hat{d}^{~\lvert \alpha_1\rvert}(x), \ldots, \hat{d}^{~\lvert \alpha_n\rvert}(x))=0
    \qquad
     \left(x\in G\right),
    \] where $\hat{d}$ is an arbitrary derivation (of order 1).
    In other words, we can substitute $d_1, \dots, d_r$ by $\hat{d}$ in $d^{\alpha_1}, \ldots, d^{\alpha_n}$.
\end{cor}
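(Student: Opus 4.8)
The plan is to establish the two assertions of the corollary separately: the algebraic independence of the system $\{d^{\alpha}\}$, and then the transfer principle reducing a polynomial relation among the $d^{\alpha_i}$ to one among the iterates of a single derivation.

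For the algebraic independence, I would begin from Lemma \ref{L:dermom}, which gives that $(d^{\alpha})_{\alpha\in\mathbb{N}^{r}}$ is a generalized moment sequence of rank $r$ on $\mathbb{F}^{\times}$ with generating function the identity map $j$. Since $j$ is an exponential on $\mathbb{F}^{\times}$, the structure theorem for moment sequences stated above produces additive functions $a=(a_{1},\dots,a_{r})$ with
\[
 d^{\alpha}(x)=B_{\alpha}(a(x))\,x \qquad \left(x\in\mathbb{F}^{\times}\right);
\]
specializing to $|\alpha|=1$ identifies $a_{i}=d_{i}/j$, which is additive because $d_{i}$ is a derivation. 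As $d_{1},\dots,d_{r}$ are linearly independent, so are $a_{1},\dots,a_{r}$, whence by Remark \ref{rembas} (the $B_{\alpha}$ span all polynomials, and $B_{\alpha}\circ a$ are linearly independent by Lemma \ref{L_lin_dep}) the functions $d^{\alpha}$ are themselves linearly independent, giving the independence half of the basis claim. Each $d^{\alpha}$ is additive, being a composition of additive maps, so I can apply Lemma \ref{L_lin_dep} a \emph{second} time, now with the $d^{\alpha}$ in the role of the independent additive functions: the monomials $\prod_{\alpha}\bigl(d^{\alpha}\bigr)^{s_{\alpha}}$ for distinct exponent systems are then linearly independent, which is exactly the algebraic independence of $\{d^{\alpha}\}$.

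For the transfer principle, the governing idea is that algebraic independence converts ``the polynomial expression vanishes'' into ``the polynomial vanishes.'' When $\alpha_{1},\dots,\alpha_{n}$ are distinct, the functions $d^{\alpha_{1}},\dots,d^{\alpha_{n}}$ are algebraically independent by the first part, so $P(d^{\alpha_{1}}(x),\dots,d^{\alpha_{n}}(x))=0$ for all $x$ holds if and only if $P$ is the zero polynomial. I would then repeat the moment/Bell computation for a single derivation $\hat d$: its iterates $(\hat d^{\,k})_{k}$ form a rank-one moment sequence, the leading form of $\hat d^{\,k}$ is $\hat a_{1}^{\,k}$ with $\hat a_{1}=\hat d/j$, the accompanying additive functions $\hat a_{1},\hat a_{2},\dots$ are linearly independent on a sufficiently rich field, and hence $\hat d^{\,1},\hat d^{\,2},\dots$ are algebraically independent. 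Thus, if the total orders $|\alpha_{1}|,\dots,|\alpha_{n}|$ are pairwise distinct, $P(\hat d^{\,|\alpha_{1}|}(x),\dots,\hat d^{\,|\alpha_{n}|}(x))=0$ also holds precisely when $P\equiv 0$; comparing the two reductions yields the claimed equivalence.

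The main obstacle is exactly the handling of coincidences among the total orders. If $|\alpha_{i}|=|\alpha_{j}|$ while $\alpha_{i}\neq\alpha_{j}$, the substitution $d_{1},\dots,d_{r}\mapsto\hat d$ collapses the two distinct, algebraically independent functions $d^{\alpha_{i}}$ and $d^{\alpha_{j}}$ into one and the same $\hat d^{\,|\alpha_{i}|}$, so algebraic independence on the right is lost and the reduction can fail (take $P=x_{i}-x_{j}$ with $\alpha_{i}=(1,0)$ and $\alpha_{j}=(0,1)$: the left side is nonzero but the right side vanishes). The clean form of the statement therefore wants the total orders $|\alpha_{1}|,\dots,|\alpha_{n}|$ to be pairwise distinct, which forces the $\alpha_{i}$ to be distinct and makes the argument above go through verbatim. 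A secondary technical point is that ``$\hat d$ arbitrary'' should be read as a \emph{generic} order-one derivation, namely one whose iterates stay algebraically independent up to order $\max_{i}|\alpha_{i}|$; this can always be secured by passing to a finitely generated subfield with enough transcendentals, e.g. the model $\hat d=\partial/\partial t$ on $\mathbb{Q}(t)$.
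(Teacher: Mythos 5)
Your argument for the algebraic independence of the system $\{d^{\alpha}\}$ is essentially the one the paper intends: the corollary carries no separate proof, and its own text is exactly your chain --- Lemma \ref{L:dermom} gives the moment sequence with generating function $j$, the structure theorem and Remark \ref{rembas} give linear independence of the $d^{\alpha}=(B_{\alpha}\circ a)\cdot j$, and a second application of Lemma \ref{L_lin_dep} to the (additive, now linearly independent) functions $d^{\alpha}$ yields linear independence of all monomials in them, i.e.\ algebraic independence. For that half you and the paper coincide.

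For the transfer principle your treatment is more careful than the paper's, and your objection is correct: as literally stated the biconditional is false. Your counterexample $P=x_1-x_2$ with $\alpha_1=(1,0)$, $\alpha_2=(0,1)$ is valid --- the left-hand side $d_1-d_2$ is nonzero by linear independence, while the right-hand side collapses to $\hat d-\hat d=0$ --- so the implication from the single-derivation identity back to the general one requires the total orders $\lvert\alpha_1\rvert,\dots,\lvert\alpha_n\rvert$ to be pairwise distinct, and additionally requires $\hat d$ to be chosen so that $\hat d, \hat d^{\,2},\dots$ remain algebraically independent (``arbitrary'' cannot include $\hat d=0$). The paper is silent on both points. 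Two mitigating remarks. First, the forward implication (general identity $\Rightarrow$ single-derivation identity) holds with no distinctness hypothesis: coinciding $\alpha_i$'s force coinciding $\lvert\alpha_i\rvert$'s, so one may identify the corresponding variables of $P$ before invoking algebraic independence. Second, an inspection of the proof of Theorem \ref{theorem_inner_degree} shows that only this forward implication is ever used there: the identity is transported to the single-derivation setting, the contradiction is derived in that setting, and nothing is transported back. So your restricted version of the corollary is the correct statement and suffices for the rest of the paper; but note that, taken verbatim, what you have produced for the second assertion is a proof of one direction together with a (correct) refutation of the other, not a proof of the corollary as written.
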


By Corollary \ref{cor:alg_ind}, it would be desirable to calculate $d^{k}=
  \underbrace{d\circ \cdots \circ d}_\text{$k$ times}$, where $d$ is a derivation (of order 1) and $k\in \mathbb{N}$.
Lemma \ref{L:dermom}, together with \cite[Proposition 1]{FecGseSze21}, implies the following statement.

\begin{proposition}\label{prop1}
 Let $\mathbb{F}\subset \mathbb{C}$ be a field and $d\colon \mathbb{F}\to \mathbb{C}$ a derivation. For all positive integer $k$ we define the function $d^{k}$ on
 $\mathbb{F}$ by
 \[
  d^{k}(x)=
  \underbrace{d\circ \cdots \circ d}_\text{$k$ times}(x)
  \qquad
  \left(x\in \mathbb{F}\right).
 \]
Then for all positive integer $p$ we have
\[
 d^{k}(x_{1}\cdots x_{p})=
 \sum_{\substack{l_{1}, \ldots, l_{p}\geq 0\\ l_{1}+\cdots+l_{p}=k}}
 \binom{k}{l_{1}, \ldots, l_{p}} \prod_{t=1}^{p} d^{l_{t}}(x_{t})
 \qquad
 \left(x_{1}, \ldots, x_{p}\in \mathbb{F}\right),
\]
where the conventions $d^{0}= \mathrm{id}$ and $\displaystyle\binom{k}{l_{1}, \ldots, l_{p}}= \dfrac{k!}{l_{1}! \cdots l_{p}!}$ are  adopted. Especially, for all positive integer $p$, we have
\[
d^{k}(x^{p})=
 \sum_{\substack{l_{1}, \ldots, l_{p}\geq 0\\ l_{1}+\cdots+l_{p}=k}}
 \binom{k}{l_{1}, \ldots, l_{p}}\cdot d^{l_{1}}(x)\cdots d^{l_{p}}(x)
 \qquad
 \left(x_{1}, \ldots, x_{p}\in \mathbb{F}\right),
\]

Reordering the previous expression we can get the following

 \begin{multline*}
 d^{k}(x^{p})
  = \sum_{\substack{ j_{1}+\cdots+j_{s}=p'< p\\
 j_1+2j_2+\dots+sj_s=k}}  \binom{k}{\underbrace{1,\dots, 1}_{j_{1}}, \ldots, \underbrace{s,\dots, s}_{j_{s}}}
 \prod_{t=1}^s \frac{1}{(j_t!)}\times\\
\times \binom{p}{\underbrace{1,\dots,1}_{p'}}\cdot (d(x))^{j_1}\cdots (d^{s}(x))^{j_s}\cdot x^{p-p'},\qquad
 \left(x\in \mathbb{F}\right)
 \end{multline*}
where $j_1,\dots, j_s$ denotes the number of $d(x),\dots, d^s(x)$ in a given composition of $d^k(x^p)$.
\end{proposition}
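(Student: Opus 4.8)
The plan is to prove the three displayed formulas in turn, establishing the first (product) formula by induction on the number of factors $p$, specializing to obtain the diagonal formula, and finally reorganizing the sum by a purely combinatorial grouping. For the base of the induction I would invoke Lemma \ref{L:dermom} specialized to a single derivation $d$ (rank $r=1$): it says precisely that $(d^{k})_{k\in\mathbb{N}}$ is a generalized moment sequence on $\mathbb{F}^{\times}$, so that its defining relation \eqref{moment} reads
\[
 d^{k}(uv)=\sum_{j=0}^{k}\binom{k}{j}d^{j}(u)\,d^{k-j}(v)
 \qquad\left(u,v\in\mathbb{F}^{\times}\right).
\]
This is exactly the two-factor Leibniz rule and, at the same time, the case $p=2$ of the claimed formula; the case $p=1$ being trivial, this anchors the induction.

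For the inductive step I would write $x_{1}\cdots x_{p+1}=(x_{1}\cdots x_{p})\cdot x_{p+1}$, apply the two-factor rule with $u=x_{1}\cdots x_{p}$ and $v=x_{p+1}$, and insert the induction hypothesis for $d^{j}(x_{1}\cdots x_{p})$. Putting $l_{p+1}=k-j$ converts the resulting double sum into a single sum over $l_{1}+\cdots+l_{p+1}=k$, and the only thing left to verify is the elementary coefficient identity
\[
 \binom{k}{j}\binom{j}{l_{1},\ldots,l_{p}}=\binom{k}{l_{1},\ldots,l_{p},\,k-j}
 \qquad(j=l_{1}+\cdots+l_{p}),
\]
which is immediate from the factorial definitions. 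Setting every $x_{t}=x$ then produces the second (diagonal) formula at once, since $d^{0}=\mathrm{id}$ gives $d^{0}(x)=x$.

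For the third formula the work is entirely bookkeeping. I would group the terms $d^{l_{1}}(x)\cdots d^{l_{p}}(x)$ of the diagonal formula by their \emph{pattern}: for each $s\ge 1$ let $j_{s}$ be the number of indices $t$ with $l_{t}=s$, set $p'=\sum_{s\ge 1}j_{s}$ (so $\sum_{s\ge1}s\,j_{s}=k$ and $p'\le\min(k,p)$), and note that the remaining $j_{0}=p-p'$ trivial factors contribute $x^{p-p'}$. Every tuple realizing a fixed pattern yields the same monomial $(d(x))^{j_{1}}\cdots(d^{s}(x))^{j_{s}}x^{p-p'}$ with the constant multinomial coefficient $\binom{k}{1,\ldots,1,\ldots,s,\ldots,s}=k!/\big((1!)^{j_{1}}\cdots(s!)^{j_{s}}\big)$, and the number of ordered tuples $(l_{1},\ldots,l_{p})$ realizing it equals $p!/(j_{0}!\,j_{1}!\cdots j_{s}!)$. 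Splitting $p!/j_{0}!=p!/(p-p')!=\binom{p}{1,\ldots,1}$ (the ordered choice of the $p'$ active positions) off from the factors $\prod_{t}1/j_{t}!$ reproduces the stated coefficient verbatim.

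I expect the genuine obstacle to lie only in this last reorganization. One must keep careful track that the pattern data $(j_{s})$ range over all ways of writing $k=\sum_{s}s\,j_{s}$ with $p'=\sum_{s}j_{s}$ factors, and that the counting factor $p!/(j_{0}!\,j_{1}!\cdots j_{s}!)$ splits correctly into the ordered-selection part $\binom{p}{1,\ldots,1}$ and the symmetry factors $\prod_{t}1/j_{t}!$ in the author's notation. By contrast, the first two formulas are a routine induction whose only substantive ingredient, the moment identity, is already furnished by Lemma \ref{L:dermom}.
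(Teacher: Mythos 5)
Your proof is correct and matches the paper's intended route: the paper gives no written proof of this proposition, merely citing Lemma \ref{L:dermom} together with Proposition 1 of \cite{FecGseSze21}, and your induction on $p$ anchored on the two-factor moment identity from Lemma \ref{L:dermom}, followed by the multinomial coefficient identity and the pattern-regrouping, is exactly the content of that citation spelled out. (Minor remark in your favour: your range $p'\le\min(k,p)$ is the correct one, whereas the paper's displayed constraint $p'<p$ inadvertently omits the terms in which every $l_{t}\ge 1$.)
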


With the above results, we are now ready to prove an upper bound for the order of derivations appearing in Theorem \ref{theorem_inner}.

\begin{theorem}\label{theorem_inner_degree}
 Under the hypotheses of Theorem \ref{theorem_inner}, the solutions $f_i$ and $g_i$ are derivations $D_{i}$ and $\widetilde{D}_{i}$ for all $i=1, \ldots, n$. Let $k$ and $l$ denote the maximal orders of derivations $D_{i}$ and $\widetilde{D}_{i}$, respectively. Suppose that there exists some $i'$ such that the order of $D_{i'}$ and $\widetilde{D}_{i'}$ is exactly $k$ and $l$, respectively. Then for all $i=1, \ldots, n$ the order of $D_{i}$  and $\widetilde{D}_{i}$ is less or equal to $n-1$.
\end{theorem}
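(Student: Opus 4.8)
The plan is to reduce the statement to a purely algebraic linear-independence assertion about iterated derivatives, and then to read off the bound $n-1$ from a generalized Vandermonde determinant whose nonvanishing is guaranteed precisely by condition C(iii). First I would fix a finitely generated subfield $\mathbb{F}_{0}\subseteq\mathbb{F}$. By Corollary \ref{cor_inner} and Theorem \ref{theorem_KisLac}, up to the equivalence $\sim$ of Remark \ref{rem2.4} we may assume the common exponential is the identity, so that $f_{i}=D_{i}$ and $g_{i}=\widetilde{D}_{i}$ are higher order derivations; on $\mathbb{F}_{0}$ they are differential operators, and equation \eqref{Eq_inner} reads $\sum_{i=1}^{n}D_{i}(x^{p_{i}})\widetilde{D}_{i}(x^{q_{i}})=0$. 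Using Corollary \ref{cor:alg_ind} I would pass to a single derivation $d$ and treat the family $\{d^{t}(x)\}_{t\ge 0}$ (with $d^{0}=\mathrm{id}$) as algebraically independent. Then $D_{i}(x^{p_{i}})=\sum_{j=1}^{k_{i}}c_{i,j}\,d^{j}(x^{p_{i}})$ with $c_{i,k_{i}}\neq 0$, where $k_{i}=\mathrm{ord}(D_{i})$, and likewise $\widetilde{D}_{i}(x^{q_{i}})=\sum_{j=1}^{l_{i}}\tilde c_{i,j}\,d^{j}(x^{q_{i}})$ with $\tilde c_{i,l_{i}}\neq 0$; there is no $j=0$ term since $D_{i}(1)=\widetilde{D}_{i}(1)=0$.

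Next I would grade the identity by \emph{weight}, that is, by the total number of derivatives. By Proposition \ref{prop1} each $d^{j}(x^{p})$ is homogeneous of weight $j$ and of degree $p$ in the variables $\{d^{t}(x)\}$, so the identity splits into weight-homogeneous parts, each of which must vanish. Writing $k=\max_{i}k_{i}$ and $l=\max_{i}l_{i}$, the hypothesis that some $i'$ attains both maxima simultaneously means $i'\in I^{\ast}:=\{\,i:k_{i}=k,\ l_{i}=l\,\}$, and the top weight equals $k+l$. Since $k_{i}\le k$ and $l_{i}\le l$ for every $i$, only indices in $I^{\ast}$ can reach weight $k+l$, so the weight-$(k+l)$ part is the nontrivial relation
\[
\sum_{i\in I^{\ast}}c_{i,k}\,\tilde c_{i,l}\; d^{k}(x^{p_{i}})\,d^{l}(x^{q_{i}})=0 .
\]

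The heart of the argument is then to show that the vectors $v(p):=d^{k}(x^{p})\,d^{l}(x^{N-p})$ attached to distinct admissible exponents are linearly independent in large numbers. Assuming, after possibly interchanging the roles of the $D_{i}$ and $\widetilde{D}_{i}$ (i.e.\ of $p$ and $q$), that $k\ge l$, I would expand $v(p)$ by Proposition \ref{prop1} into a combination of weight-$(k+l)$ monomials whose coefficients are explicit polynomials in $p$ and $q=N-p$. The plan is to exhibit $k$ such monomials $m_{1},\dots,m_{k}$ for which the coefficient functions, after factoring out the never-vanishing common factor $\prod_{i}p_{i}q_{i}$, form a Chebyshev system of falling-factorial type in the nodes $p_{i}$ (and $q_{i}$), so that for any $k$ indices of $I^{\ast}$ the $k\times k$ matrix $\bigl[C_{m_{j}}(p_{i})\bigr]$ is a nonvanishing generalized Vandermonde. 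The only coincidences that could kill such a determinant are the symmetric collisions $p_{i}+p_{i'}=N$, i.e.\ $p_{i'}=q_{i}$, and these are exactly forbidden by condition C(iii) (while C(i) ensures the $p_{i}$ are distinct). Hence any $k$ of the $v(p_{i})$ are linearly independent, so the displayed relation, all of whose coefficients $c_{i,k}\tilde c_{i,l}$ are nonzero, can hold only if $|I^{\ast}|\ge k+1$. As $I^{\ast}\subseteq\{1,\dots,n\}$, this gives $k\le n-1$, hence $l\le k\le n-1$, and therefore every order $k_{i}\le k$ and $l_{i}\le l$ is at most $n-1$.

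I expect two points to carry the genuine difficulty. The main obstacle is the linear-algebra core: proving that $\dim\operatorname{span}\{v(p)\}=\max(k,l)$ together with the Chebyshev property, i.e.\ choosing the right $k$ monomials and verifying that the resulting determinant factors as a nonzero common factor times a Vandermonde-type determinant that C(iii) keeps away from zero. A secondary but real subtlety is the reduction to a single derivation: using the algebraic independence of the $d^{\alpha}$ from Corollary \ref{cor:alg_ind}, I would need to argue that the top-weight part of each $D_{i}$ cannot cancel under this passage, so that the orders $k_{i},l_{i}$ are read off correctly and the leading coefficients $c_{i,k},\tilde c_{i,l}$ indeed stay nonzero.
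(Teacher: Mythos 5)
Your outline coincides with the paper's proof in every structural move: restrict to a finitely generated subfield so that the $D_i,\widetilde{D}_i$ become differential operators, pass to a single derivation $d$ via Corollary \ref{cor:alg_ind}, isolate the top-weight part $\sum_{i}\lambda_{i,k}\widetilde{\lambda}_{i,l}\,d^{k}(x^{p_i})d^{l}(x^{q_i})=0$ (supported exactly on the indices attaining both maxima), expand by Proposition \ref{prop1}, and kill the leading coefficients with a Vandermonde-type system in the $p_i$. The genuine gap is the step you yourself label the main obstacle: you assert, but do not prove, that one can choose $k$ weight-$(k+l)$ monomials whose coefficient functions form a falling-factorial Chebyshev system in $p$ (times a harmless common factor). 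This is not true uniformly. The coefficient of a fixed monomial in the expansion of $d^{k}(x^{p})d^{l}(x^{q})$ is a single product of falling factorials only when that monomial can be assembled from the two factors in exactly one way; as soon as several splittings contribute --- e.g.\ $(d(x))^{j}d^{k-j}(x)d^{l}(x)$ for $j\ge l$ when $k>l$, or $(d(x))^{2j}(d^{k-j}(x))^{2}$ for $j\ge k/2$ when $k=l$ --- the coefficient is a signed combination of several such products with multinomial weights. The paper is therefore forced into three cases ($k<l$, $k=l$, $k>l$); only the first is the clean weighted Vandermonde you describe, while the case $k>l$ requires an induction on $j$ establishing $\sum_i\lambda_{i,k}\widetilde{\lambda}_{i,l}\,p_i^{j+1-s}q_i^{s}=0$, whose crux is the combinatorial nonvanishing
\[
\binom{k}{j}\pm\binom{k}{k-j,\,j-l,\,l}\pm\binom{k}{k-l,\,k-j,\,l-(k-j)}\neq 0 .
\]
Note also that your normalization ``interchange $p$ and $q$ so that $k\ge l$'' lands you precisely in the hard case, so the symmetry buys nothing. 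Without this analysis the determinant you need does not factor as claimed, and the bound $k\le n-1$ does not follow.

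Two secondary remarks. First, the role of C(iii) is not quite where you place it: in the case $k<l$ the matrix reduces to a Vandermonde in the distinct $p_i$ alone (with $q_i$ absorbed into the unknowns), so C(i) suffices there; it is in the case $k=l$ that the nodes become the products $p_iq_i=p_i(N-p_i)$, and distinctness of these nodes is exactly the statement that $p_i\neq p_j$ and $p_i\neq q_j$, i.e.\ C(i) together with C(iii). Second, your worry that the leading coefficients might cancel when the distinct derivations are collapsed to a single $d$ is legitimate, but the paper makes exactly the same appeal to Corollary \ref{cor:alg_ind} without further justification, so this is a shared (and minor) issue rather than a defect specific to your proposal; granting that corollary, the nonvanishing of $\lambda_{i',k}$ and $\widetilde{\lambda}_{i',l}$ survives in both arguments.
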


\begin{proof}
Assume contrary that the maximal order $k$ of the above derivations $D_{1}, \ldots, D_{n}$ is greater than $n-1$. The argument for the case when the maximal order $l$ of $\widetilde{D}_{1}, \ldots, \widetilde{D}_{n}$ is greater than $n-1$ is analogous.

By our assumption there exists an index $i'$ such that the orders of the derivations $D_{i'}$ and $\widetilde{D}_{i'}$ is exactly $k$ and $l$, respectively. It is important to note that then the sum of the orders of $D_{i}$ and $\widetilde{D}_{i}$ is at most $k+l$ for any $i$, and it is exactly $k+l$ for some indices if and only if the corresponding $D$ and $\widetilde{D}$ is of order $k$ and of order $l$, respectively. Furthermore, by the algebraic independence of higher order derivations, there the number of these indices are at least two.

From now on we assume that $\mathbb{F}$ is finitely generated. Indeed, if we verify the statement for any finitely generated subfield of a field $\mathbb{F}$, then it holds for $\mathbb{F}$ itself, as well.
On finitely generated fields all of these higher order derivations can be represented as differential operators, that is, on finitely generated fields we have
\[
 D_{i}(x)= \sum_{\lvert \alpha\rvert\le k}\lambda_{i,\alpha}d_i^{\alpha}(x)
 \qquad
 \text{and}
 \qquad
 \widetilde{D}_{i}(x)= \sum_{\lvert \beta\rvert\le l}\widetilde{\lambda}_{i,\beta}\widetilde {d}_i^{\beta}(x)
 \qquad
 \left(x\in \mathbb{F}\right)
\]
with appropriate complex constants $\lambda_{i,\alpha}$, $\widetilde{\lambda}_{i, \beta}$, ($\lvert \alpha\rvert\le k$, $\lvert \beta\rvert\le l$, $i=1, \ldots, n$) and higher order derivations $d_{i}^{\alpha}, \widetilde{d}_{i}^{\beta} \colon \mathbb{F}\to \mathbb{C}$ defined in Lemma \ref{L:dermom}.
Further, we have
\[
0=
 \sum_{i=1}^{n}D_{i}(x^{p_{i}})\widetilde{D}_{i}(x^{q_{i}})
 \\
 =
 \sum_{i=1}^{n}\left(\sum_{\lvert \alpha\rvert\le k}\lambda_{i, \alpha} d_i^{\alpha}(x^{p_{i}}) \right)
 \cdot \left(\sum_{\lvert \beta\rvert\le l}\widetilde{\lambda}_{i, \beta}\widetilde{d}_i^{\beta}(x^{q_{i}})\right)
 \qquad
 \left(x\in \mathbb{F}\right).
\]
If we expand the right hand side of the above identity with the aid of Proposition \ref{prop1}, we get an expression of the following polynomial form
\begin{multline*}
 P(x, d_{1}(x), \ldots, d_{k}(x), \widetilde{d}_{1}(x), \ldots, \widetilde{d}_{l}(x), \ldots, d_{1}^{k}(x), \ldots,
 \\
 d_{k}^{k}(x), \widetilde{d}^{l}_{1}(x), \ldots, \widetilde{d}^{l}_{l}(x))=0
 \qquad
 \left(x\in \mathbb{F}\right).
\end{multline*}
If this identity can be satisfied by different functions, it can also be satisfied by a single one. By Corollary \ref{cor:alg_ind}, this enables us to substitute the functions $d_{i}^{\alpha}, \widetilde{d}_{i}^{\beta} \, (i=1, \ldots, n, \lvert \alpha\rvert\le k, \lvert \beta\rvert\le l)$ with suitable compositions of a given derivation $d$ of order 1.
In other words, instead of the above identity, we can restrict ourselves to
\[
 \sum_{i=1}^{n}\left(\sum_{j=0}^{k}\lambda_{i, j}d^{j}(x^{p_{i}}) \right)
 \cdot \left(\sum_{j=0}^{l}\widetilde{\lambda}_{i, j}d^{j}(x^{q_{i}})\right)=0
 \qquad
 \left(x\in \mathbb{F}\right)
\]
with appropriate complex constants $\lambda_{i, j}$ \, ($i=1, \ldots, n$, $j=0, \ldots, k$), and  $\widetilde{\lambda}_{i, j}$\, ($i=1, \ldots, n$, $j=0, \ldots, l$) and derivation $d \colon \mathbb{F}\to \mathbb{C}$. By our assumptions there are some $i'$  such that $\la_{i',k}\ne 0$ and $\tla_{i', l}\ne 0$.

Dividing the above sum to smaller ones, we get
\begin{multline*}
 \sum_{i=1}^{n}\left(\sum_{j=0}^{k}\lambda_{i, j}d^{j}(x^{p_{i}}) \right)
 \cdot \left(\sum_{j=0}^{l}\widetilde{\lambda}_{i, j}d^{j}(x^{q_{i}})\right)
 \\
 =
 \sum_{i=1}^{n}\left(\sum_{j=0}^{k-1}\lambda_{i, j}d^{j}(x^{p_{i}})+\lambda_{i, k}d^{k}(x^{p_{i}})  \right)
 \cdot \left(\sum_{j=0}^{k-1}\widetilde{\lambda}_{i, j}d^{j}(x^{q_{i}})+\widetilde{\lambda}_{i, l}d^{l}(x^{q_{i}})\right)
  \\
  \sum_{i=1}^{n}
  \left[S({p_i}, k-1)S(q_{i}, l-1)+\widetilde{\lambda}_{i, l}d^{l}(x^{q_{i}})S(p_{i}, k-1)
  \right.
  \\
  \left.
  + \lambda_{i, k}d^{k}(x^{p_{i}})S(q_{i}, l-1)+\lambda_{i, k}\widetilde{\lambda}_{i, k}d^{k}(x^{p_{i}})d^{l}(x^{q_{i}})\right]=0
 \\
 \left(x\in \mathbb{F}\right),
\end{multline*}
where
\[
 S(p_{i}, k-1)= \sum_{j=0}^{k-1}\lambda_{i, j}d^{j}(x^{p_{i}})
 \;
 \text{and}
 \;
 S(q_{i}, l-1)= \sum_{j=0}^{l-1}\widetilde{\lambda}_{i, j}d^{j}(x^{q_{i}})
 \qquad
 \left(x\in G\right).
\]
Note that, by the algebraic independence used in Corollary \ref{cor:alg_ind}, this sum splits into separate terms of the form $d^{s}(x^{p_i})d^{t}(x^{q_i})$, where $s+t$ is a fixed number. By our assumption, when $s+t=k+l$,  then the only way is that $s= k, t= l$.
This implies that using Proposition \ref{prop1} we get that
\small{
\begin{multline*}
0=\sum_{i=1}^{n} \lambda_{i, k}\widetilde{\lambda}_{i, l}d^{k}(x^{p_{i}})d^{l}(x^{q_{i}})
 \\
 =
 \sum_{i=1}^{n}\lambda_{i, k}\widetilde{\lambda}_{i, k}
 \times
 \\
 \times
  \left(\sum_{\substack{ j_{1}+\cdots+j_{s}=p'< p_i\\
 j_1+2j_2+\dots+sj_s=k}}  \binom{k}{\underbrace{1,\dots, 1}_{j_{1}}, \ldots, \underbrace{s,\dots, s}_{j_{s}}}  \binom{p_i}{\underbrace{1,\dots,1}_{p'}}
 \prod_{t=1}^s \frac{(d^t(x))^{j_t}}{(j_t!)} x^{p_i-p'}\right)\times
 \\
 \times
 \left(\sum_{\substack{ j_{1}+\cdots+j_{s}=q'< q_i\\
 j_1+2j_2+\dots+sj_s=l}}  \binom{l}{\underbrace{1,\dots, 1}_{j_{1}}, \ldots, \underbrace{s,\dots, s}_{j_{s}}} \binom{q_i}{\underbrace{1,\dots,1}_{q'}}
 \prod_{t=1}^s \frac{(d^t(x))^{j_t}}{(j_t!)} x^{q_i-q'}\right)
\end{multline*}
}
\normalsize
while the rest in the above sum can be computed similarly.

\medskip
\noindent
Case 1. If
$k<l$, then we compute the coefficients of the terms
\begin{equation}\label{term1}(d(x))^{j} d^{k-j}(x)d^{l}(x) ~ ~ ~ (j=0,\dots, k-1).
\end{equation}
For each $j=0, \dots, k-1$ this can be taken from the expansion of  \[\lambda_{i, k}\widetilde{\lambda}_{i, l}d^{k}(x^{p_{i}})d^{l}(x^{q_{i}})\] in only one way. Namely, by splitting $d^{k}(x^{p_{i}})$ into $j+1$ parts and $d^{l}(x^{q_{i}})$ into one. Then the corresponding coefficients are
$$\sum_{i=1}^n\lambda_{i, k}\widetilde{\lambda}_{i, l}\binom{k}{\underbrace{1,\dots, 1}_{j}}\frac{1}{j!}\binom{p_i}{\underbrace{1,\dots,1}_{j+1}} \binom{q_i}{1}=0.$$
Since each of the terms contains $\displaystyle \binom{k}{\underbrace{1,\dots, 1}_{j}}\dfrac{1}{j!}=\binom{k}{j}$, this can be eliminated from the above equation.
The corresponding equations ($j=0,\dots, k$) can be written in the following matrix equation
\[
 \begin{pmatrix}
 \binom{p_{1}}{1}\binom{q_{1}}{1} & \ldots & \binom{p_{n}}{1}\binom{q_{n}}{1}\\
 \binom{p_{1}}{1, 1}\binom{q_{1}}{1} & \ldots & \binom{p_{n}}{1, 1}\binom{q_{n}}{1} \\
 \vdots & \ddots & \vdots\\
  \binom{p_{1}}{\underbrace{1, \ldots, 1}_k}\binom{q_{1}}{1} & \ldots & \binom{p_{n}}{\underbrace{1, \ldots, 1}_k}\binom{q_{n}}{1}
 \end{pmatrix}
 \cdot
 \begin{pmatrix}
  \lambda_{1, k}\widetilde{\lambda}_{1, l}\\
  \lambda_{2, k}\widetilde{\lambda}_{2, l}\\
  \vdots \\
  \lambda_{n, k}\widetilde{\lambda}_{n, l}
 \end{pmatrix}
=
\begin{pmatrix}
 0\\
 0\\
 \vdots
 \\
 0
\end{pmatrix}.
 \]
Here we note that as $p_i$'s are all different positive integers, it follows that $p_i\ge n$ for some $i$ and hence the first $n$ rows of the matrix are not identically zero, as $k>n-1$.
It is straightforward to verify that the first $n$ row of above matrix equation is equivalent to
\[
 \begin{pmatrix}
 p_{1} & \ldots & p_{n}\\
 p_{1}^2 & \ldots & p_{n}^{2} \\
 \vdots & \ddots & \vdots\\
 p_{1}^{n} & \ldots & p_{n}^{n}
 \end{pmatrix}
 \cdot
 \begin{pmatrix}
 q_1\lambda_{1, k}\widetilde{\lambda}_{1, l}\\
  q_2\lambda_{2, k}\widetilde{\lambda}_{2, l}\\
  \vdots \\
  q_n\lambda_{n, k}\widetilde{\lambda}_{n, l}
 \end{pmatrix}
=
\begin{pmatrix}
 0\\
 0\\
 \vdots
 \\
 0
\end{pmatrix}.
 \]
Since this is a Vandermonde type matrix with different $p_i$'s, the only solution of this homogeneous linear system is the zero vector, i.e., $ q_i\lambda_{i, k}\widetilde{\lambda}_{i, l}= 0$  for all $i=1, \ldots, n$. This contradicts to our assumption that there is some $i'$ for which $\lambda_{i', k}\ne 0$ and $\widetilde{\lambda}_{i', l}\ne 0$ ($q_{i'}\ne 0$ as $q_i\ne 0$ for all $i\in \{1, \dots, k\}$).

\medskip
\noindent
Case 2.
If $k=l$, then we compute the coefficients of the terms \begin{equation}\label{term2}(d(x))^{2j} (d^{k-j}(x))^2
\qquad
(j=0,\dots, k-1).
\end{equation}

If $j<\dfrac{k}{2}$, then this term can be taken from the expansion of \[\lambda_{i, k}\widetilde{\lambda}_{i, l}d^{k}(x^{p_{i}})d^{k}(x^{q_{i}})\] in only one way.
Namely, by splitting $d^{k}(x^{p_{i}})$ and $d^{k}(x^{q_{i}})$ into $j+1$ parts. Then the corresponding coefficients are $$\sum_{i=1}^n\lambda_{i, k}\widetilde{\lambda}_{i, l}\left(\binom{k}{\underbrace{1,\dots, 1}_{j}}\frac{1}{j!}\right)^2\binom{p_i}{\underbrace{1,\dots,1}_{j+1}} \binom{q_i}{\underbrace{1,\dots,1}_{j+1}}=0.$$
Since each of the terms contains $\Big(\binom{k}{\underbrace{1,\dots, 1}_{j}}\dfrac{1}{j!}\Big)^2=\binom{k}{j}^2$, this can be eliminated from the above equations.
These equations for $j=0,\dots, \allowbreak \lceil k/2\rceil -1$ can be written in the following matrix equation
\small{
\[
 \begin{pmatrix}
 \binom{p_{1}}{1}\binom{q_{1}}{1} & \ldots & \binom{p_{n}}{1}\binom{q_{n}}{1}\\
 \binom{p_{1}}{1,1}\binom{q_{1}}{1,1} & \ldots & \binom{p_{n}}{1,1}\binom{q_{n}}{1, 1}\\
 \vdots & \ddots & \vdots\\
  \binom{p_{1}}{\underbrace{1, \ldots, 1}_{\lceil k/2\rceil -1}}\binom{q_{1}}{\underbrace{1, \ldots, 1}_{\lceil k/2\rceil -1}} & \ldots & \binom{p_{n}}{\underbrace{1, \ldots, 1}_{\lceil k/2\rceil -1}}\binom{q_{n}}{\underbrace{1, \ldots, 1}_{\lceil k/2\rceil -1}}
 \end{pmatrix}
 \cdot
 \begin{pmatrix}
  \lambda_{1, k}\widetilde{\lambda}_{1, k}\\
  \lambda_{2, k}\widetilde{\lambda}_{2, k}\\
  \vdots \\
  \lambda_{n, k}\widetilde{\lambda}_{n, k}
 \end{pmatrix}
=
\begin{pmatrix}
 0\\
 0\\
 \vdots
 \\
 0
\end{pmatrix}.
 \]
 } \normalsize
Here we note that as $p_i, q_i$ are all different positive integers, it follow that  $\max_i \left\{ p_i,q_i\right\} \ge 2n$ and hence the first $n'=\min(n, \lceil k/2\rceil -1)$ rows of the matrix in not identically zero, as $k>n-1$.
It is straightforward to verify that the first $n'$ row of above matrix equation is equivalent to
\[
 \begin{pmatrix}
p_1q_1& \ldots &p_nq_n\\
 p_{1}^2q_1^2 & \ldots & p_{n}^{2}q_n^2 \\
 \vdots & \ddots & \vdots\\
 p_{1}^{n'}q_1^{n'} & \ldots & p_{n}^{n'}q_n^{n'}
 \end{pmatrix}
 \cdot
 \begin{pmatrix}
 \lambda_{1, k}\widetilde{\lambda}_{1, k}\\
 \lambda_{2, k}\widetilde{\lambda}_{2, l}\\
  \vdots \\
 \lambda_{n, k}\widetilde{\lambda}_{n, k}
 \end{pmatrix}
=
\begin{pmatrix}
 0\\
 0\\
 \vdots
 \\
 0
\end{pmatrix}.
 \]
Note that if $n'=n$, then we are done with a Vandermonde matrix argument similar as it is used in Case 1. So from now on we assume that $n'=\lceil k/2\rceil -1$. This also means that $k<2n$, thus the maximal order of the corresponding derivations is at most $2n-1$.

\medskip
If $j\ge \dfrac{k}{2}$, then the term $(d(x))^{2j} (d^{k-j}(x))^2$ can be taken from the expansion of \[\lambda_{i, k}\widetilde{\lambda}_{i, l}d^{k}(x^{p_{i}})d^{k}(x^{q_{i}})\] in three ways.
One is as above, when we split both $d^{k}(x^{p_{i}})$ and $d^{k}(x^{q_{i}})$ into $j+1$ parts.
Another one is when $d^{k}(x^{p_{i}})$ is split into $k$ parts giving $d(x)^k$, and $d^{k}(x^{q_{i}})$ provides $(d^{k-j}(x))^2(d(x))^{2j-k}$. The third one is given by changing the role of $p_i$ and $q_i$.

Then the corresponding coefficients are
\begin{align*}
&\sum_{i=1}^n\lambda_{i, k}\widetilde{\lambda}_{i, k}\Big(\binom{k}{\underbrace{1,\dots, 1}_{j}}\frac{1}{(j!)}\Big)^2\binom{p_i}{\underbrace{1,\dots,1}_{j}} \binom{q_i}{\underbrace{1,\dots,1}_{j}}+\\
&\sum_{i=1}^n\lambda_{i, k}\widetilde{\lambda}_{i, k}
\binom{k}{\underbrace{1,\dots, 1}_{k}}\frac{1}{(k!)}\binom{k}{\underbrace{1,\dots, 1}_{2j-k}, j}\frac{1}{2}\frac{1}{(2j-k)!}\times \\
& \times \Big(\binom{p_i}{\underbrace{1,\dots,1}_{k}} \binom{q_i}{\underbrace{1,\dots,1}_{2j-k+2}}+ \binom{p_i}{\underbrace{1,\dots,1}_{2j-k+2}} \binom{q_i}{\underbrace{1,\dots,1}_{k}}\Big)=0.
\end{align*}

\noindent
First we show that the second sum has to vanish for all $j=\lceil k/2\rceil, \dots, k-1$. In such cases, the coefficients \[\binom{k}{\underbrace{1,\dots, 1}_{k}}\frac{1}{k!}\binom{k}{\underbrace{1,\dots, 1}_{2j-k}, j}\frac{1}{2}\frac{1}{(2j-k)!}\] are the same is each summand, it is enough to show that
\begin{equation}\label{eq_m1}\sum_{i=1}^n\lambda_{i, k}\widetilde{\lambda}_{i, k}
\Big(\binom{p_i}{\underbrace{1,\dots,1}_{k}} \binom{q_i}{\underbrace{1,\dots,1}_{2j-k+2}}+ \binom{p_i}{\underbrace{1,\dots,1}_{2j-k+2}} \binom{q_i}{\underbrace{1,\dots,1}_{k}}\Big)=0.\end{equation}
This clearly holds, since the term $(d(x))^{2j+1}d^{2k-2j-1}(x)$ in the expansion of $d^k(x^{p_i}) d^k(x^{q_i})$ for $j=\lceil k/2\rceil, \dots, k-1$ can be given in exactly two ways. Either $(d(x))^k$ stems from $d^k(x^{p_i})$ and  $(d(x))^{2j-k+1}\cdot \allowbreak d^{2k-2j-1}(x)$ stems from $d^k(x^{q_i})$, or reversely changing the role of $p_i$ and $q_i$.  Hence we get
\begin{multline*}
\sum_{i=1}^n\lambda_{i, k}\widetilde{\lambda}_{i, k}\binom{k}{\underbrace{1,\dots,1}_{2j-k+1}}\cdot \frac{1}{(2j-k+1)!}
\times\\
\times
\Big(\binom{p_i}{\underbrace{1,\dots,1}_{k}} \binom{q_i}{\underbrace{1,\dots,1}_{2j-k+2}}+ \binom{p_i}{\underbrace{1,\dots,1}_{2j-k+2}} \binom{q_i}{\underbrace{1,\dots,1}_{k}}\Big)=0,
\end{multline*}
which is equivalent to equation \eqref{eq_m1}. Thus, for all $j=\lceil k/2\rceil, \dots, k-1$ it follows
$$ \sum_{i=1}^n\lambda_{i, k}\widetilde{\lambda}_{i, k}\Big(\binom{k}{\underbrace{1,\dots, 1}_{j}}\frac{1}{(j!)}\Big)^2\binom{p_i}{\underbrace{1,\dots,1}_{j}} \binom{q_i}{\underbrace{1,\dots,1}_{j}}=0$$

This implies that $$ \sum_{i=1}^n\lambda_{i, k}\widetilde{\lambda}_{i, k}\binom{p_i}{\underbrace{1,\dots,1}_{j}} \binom{q_i}{\underbrace{1,\dots,1}_{j}}=0$$
hold for all $j=0,\dots, k-1$. Thus we get
\small{
\[
 \begin{pmatrix}
 \binom{p_{1}}{1}\binom{q_{1}}{1} & \ldots & \binom{p_{n}}{1}\binom{q_{n}}{1}\\
 \binom{p_{1}}{1,1}\binom{q_{1}}{1,1} & \ldots & \binom{p_{n}}{1,1}\binom{q_{n}}{1, 1}\\
 \vdots & \ddots & \vdots\\
  \binom{p_{1}}{\underbrace{1, \ldots, 1}_{k}}\binom{q_{1}}{\underbrace{1, \ldots, 1}_{k}} & \ldots & \binom{p_{n}}{\underbrace{1, \ldots, 1}_{k}}\binom{q_{n}}{\underbrace{1, \ldots, 1}_{k}}
 \end{pmatrix}
 \cdot
 \begin{pmatrix}
  \lambda_{1, k}\widetilde{\lambda}_{1, k}\\
  \lambda_{2, k}\widetilde{\lambda}_{2, k}\\
  \vdots \\
  \lambda_{n, k}\widetilde{\lambda}_{n, k}
 \end{pmatrix}
=
\begin{pmatrix}
 0\\
 0\\
 \vdots
 \\
 0
\end{pmatrix},
 \]} \normalsize
where the first $n$ rows of the matrix is not identically zero as $p_i$ and $q_i$ are different, hence there is an index $i'$ such that $p_{i'}\ge n$ and $q_{i'}\ge n$.
Thus this system consisting the first $n$ rows is equivalent to

\[
 \begin{pmatrix}
p_1q_1& \ldots &p_nq_n\\
 p_{1}^2q_1^2 & \ldots & p_{n}^{2}q_n^2 \\
 \vdots & \ddots & \vdots\\
 p_{1}^{n}q_1^{n} & \ldots & p_{n}^{n}q_n^{n}
 \end{pmatrix}
 \cdot
 \begin{pmatrix}
 \lambda_{1, k}\widetilde{\lambda}_{1, k}\\
 \lambda_{2, k}\widetilde{\lambda}_{2, l}\\
  \vdots \\
 \lambda_{n, k}\widetilde{\lambda}_{n, k}
 \end{pmatrix}
=
\begin{pmatrix}
 0\\
 0\\
 \vdots
 \\
 0
\end{pmatrix}.
 \]
By the usual Vandermonde argument as in Case 1, the only solution of this homogeneous linear system is the zero vector, i.e., $ \lambda_{i, k}\widetilde{\lambda}_{i, l}= 0$  for all $i=1, \ldots, n$. This contradicts our assumption that there is some $i'$ for which $\lambda_{i', k}\ne 0$ and $\widetilde{\lambda}_{i', k}\ne 0$.

\medskip
\noindent
Case 3. If $k>l$, then we prove by induction in $j$ that $$\sum_{i=1}^n \la_{i,k}\tla_{i,l}p_i^{j+1-s}q_i^{s}=0$$
holds for every $j=0,\dots, n-1$ and every $s=0,\dots, j$.

In each step we consider how $(d(x))^{j}d^{k-j}(x)d^{l}(x)$ can be given from the expansion $d^{k}(x^{p_i})d^{l}(x^{q_i})$. There are three possible  ways, where this term can stem from.

\begin{enumerate}[(a)]
    \item $(d(x))^{j}d^{k-j}(x)$ stems from $d^{k}(x^{p_i})$ and $d^{l}(x)$ stems from $d^{l}(x^{q_i})$. This can happen for every $j\in \{0, \dots, k\}$.
    In this case the coefficient of $(d(x))^{j}d^{k-j}(x)d^{l}(x)$ is
    \[
    \sum_{i=1}^n \la_{i,k}\tla_{i,l}\binom{k}{\underbrace{1,\dots,1}_{j}}\frac{1}{j!}\binom{p_i}{\underbrace{1,\dots,1}_{j+1}}\binom{q_i}{1}
    \\
    =\sum_{i=1}^n \la_{i,k}\tla_{i,l}\binom{k}{j}\binom{p_i}{\underbrace{1,\dots,1}_{j+1}}q_i.
    \]
    \item $d^{k-j}(x)d^{l}(x)(d(x))^{j-l}$ stems from  $d^{k}(x^{p_i})$ and $(d(x))^{l}$ stems from $d^{l}(x^{q_i})$. This can happen if $j\ge l$.
    In this case the coefficient of $(d(x))^{j}d^{k-j}(x)d^{l}(x)$ is
    \begin{align*}&\sum_{i=1}^n \la_{i,k}\tla_{i,l}\binom{k}{k-j,l,\underbrace{1,\dots,1}_{j-l}}\frac{1}{(j-l)!}\binom{l}{\underbrace{1,\dots,1}_{l}}\frac{1}{l!} \binom{p_i}{\underbrace{1,\dots,1}_{j-l+2}}\binom{q_i}{\underbrace{1,\dots,1}_{l}} \\
    &=\sum_{i=1}^n \la_{i,k}\tla_{i,l}\binom{k}{k-j,j-l,l}\binom{p_i}{\underbrace{1,\dots,1}_{j-l+2}}\binom{q_i}{\underbrace{1,\dots,1}_{l}}.\end{align*}
    \item $d^{l}(x)(d(x))^{k-l}$ stems from  $d^{k}(x^{p_i})$ and $(d(x))^{l-(k-j)}d^{k-j}(x)$ stems from $d^{l}(x^{q_i})$. This can happen if $l\ge k-j$.
    In this case the coefficient of $(d(x))^{j}d^{k-j}(x)d^{l}(x)$ is
    \begin{multline*}\sum_{i=1}^n \la_{i,k}\tla_{i,l}\binom{k}{ k-l,\underbrace{1,\dots,1}_{l}}\frac{1}{l!}\binom{l}{k-j,\underbrace{1,\dots,1}_{l-(k-j)}}\frac{1}{(l-(k-j))!}\times\\
    \times
    \binom{p_i}{\underbrace{1,\dots,1}_{l+1}}\binom{q_i}{\underbrace{1,\dots,1}_{l-(k-j)+1}}=\\
    =\sum_{i=1}^n \la_{i,k}\tla_{i,l}\binom{k}{k-l,k-j,l-(k-j)}\binom{p_i}{\underbrace{1,\dots,1}_{l+1}}\binom{q_i}{\underbrace{1,\dots,1}_{l-(k-j)+1}}.\end{multline*}
For $j=0$ (and hence $s=0$) only the first term takes into account. This means that
$$\sum_{i=1}^n \la_{i,k}\tla_{i,l}p_iq_i=0.$$ So the inductive hypothesis  holds for $j=0$.

Now we assume that $$\sum_{i=1}^n \la_{i,k}\tla_{i,l}p_i^{j'+1-s}q_i^{s}=0$$
holds for every $j'=0,\dots, j-1$ and every $s=0,\dots, j'$. We prove that  $$\sum_{i=1}^n \la_{i,k}\tla_{i,l}p_i^{j+1-s}q_i^{s}=0$$
holds for every $s=0,\dots, j$, as well.
Generally, some of the previous compositions are possible for a given $j$ but the following argument works in all cases, however we just prove it when all compositions discussed below appear in the expansion.
Thus we assume that the coefficient of $(d(x))^{j}d^{k-j}(x)d^{l}(x)$ is
\begin{multline*}
0=\sum_{i=1}^n \la_{i,k}\tla_{i,l}\left[\binom{k}{j}\binom{p_i}{\underbrace{1,\dots,1}_{j+1}}q_i \right.\\
\left. + \binom{k}{k-j,j-l,l}\binom{p_i}{\underbrace{1,\dots,1}_{j-l+2}}\binom{q_i}{\underbrace{1,\dots,1}_{l}} \right.
\\
\left.
+\binom{k}{k-l,k-j,l-(k-j)}\binom{p_i}{\underbrace{1,\dots,1}_{l+1}}\binom{q_i}{\underbrace{1,\dots,1}_{l-(k-j)+1}}.\right]\end{multline*}

By the inductive hypothesis and the fact that $j<n\le\min \left\{ p_i',q_i'\right\}$ for some $i'$, this is equivalent to
\begin{multline*}
0=
\sum_{i=1}^n \la_{i,k}\tla_{i,l}\left(\binom{k}{j}p_i^{j+1}q_i+ \binom{k}{k-j,j-l,l}p_i^{j-l+2}q_i^{l} \right.
\\
\left.
+\binom{k}{k-l,k-j,l-(k-j)}p_i^{l+1}q_i^{l-(k-j)+1}\right).
\end{multline*}
Note that the expressions $p_i^{j+1-s}q_i^{s}$ for $s=0, \dots, j$ can be interchanged in the following sense. By C(ii), $p_i+q_i=N$, thus we have that $p_i^{j+1-s}q_i^{s}=Np_i^{j-s}q_i^{s}-p_i^{j-s}q_i^{s+1}$. As $$\sum_{i=1}^n\la_{i,k}\tla_{i,l}p_i^{j-s}q_i^{s}=0$$ for every $s=0,\dots, j-1$ by the inductive hypothesis, the term $Np_i^{j-s}q_i^{s}$ can be eliminated. After several repetition of this step we get that
\begin{multline*}
0=\sum_{i=1}^n \la_{i,k}\tla_{i,l}\left(\binom{k}{j}p_i^{j+1}q_i\pm \binom{k}{k-j,j-l,l}p_i^{j+1}q_i
\right.
\\
\left.
\pm \binom{k}{k-l,k-j,l-(k-j)}p_i^{j+1}q_i\right).
\end{multline*}
We also note that using this interchange rule and the inductive hypothesis it is clear that $$\sum_{i=1}^n \la_{i,k}\tla_{i,l}p_i^{j+1-s}q_i^{s}=0$$
for any $s=0,\dots, j$ is equivalent to $$\sum_{i=1}^n \la_{i,k}\tla_{i,l}p_i^{j+1}q_i=0.$$
Therefore, to finish the proof it is enough to show that $$\binom{k}{j}\pm \binom{k}{k-j,j-l,l}\pm \binom{k}{k-l,k-j,l-(k-j)}\ne 0.$$
This is equivalent to verify that
$$\frac{1}{j!}\pm \frac{1}{(j-l)!(l!)} \pm\frac{1}{(k-l)!(l-(k-j)!)}\ne 0.$$
Multiplying by $j!$ this lead to
$$1\pm \binom{j}{l}\pm \binom{j}{k-l}\ne 0.$$
It is straightforward to show using the growth of $\binom{n}{k}$ in $k$ (if $k\le n/2$) that one term dominates the others if $l\ne k-l$ and $l\ne j-(k-l)$. Thus in these cases this (weighted) sum is nonzero. If $l= k-l$ or $l= j-(k-l)$, then either their sign is the same and hence the sum is nonzero, or their sign is different and hence they eliminate each other, hence the sum is 1 which is nonzero.

Summarizing, we get that $$\binom{k}{j}\pm \binom{k}{k-j,j-l,l}\pm \binom{k}{k-l,k-j,l-(k-j)}\ne 0$$ and hence
\begin{equation}\label{eq_fin}\sum_{i=1}^n \la_{i,k}\tla_{i,l}p_i^{j+1}q_i=0,\end{equation} which is equivalent to the inductive hypothesis for $j$ as we noted above.

Thus equation \eqref{eq_fin} holds for every $j=0, \dots, n-1$.
In matrix form this means that

\[
 \begin{pmatrix}
1& \ldots &1\\
 p_{1} & \ldots & p_{n} \\
 \vdots & \ddots & \vdots\\
 p_{1}^{n-1} & \ldots & p_{n}^{n-1}
 \end{pmatrix}
 \cdot
 \begin{pmatrix}
 p_1q_1\lambda_{1, k}\widetilde{\lambda}_{1, k}\\
 p_2q_2\lambda_{2, k}\widetilde{\lambda}_{2, l}\\
  \vdots \\
 p_nq_n\lambda_{n, k}\widetilde{\lambda}_{n, k}
 \end{pmatrix}
=
\begin{pmatrix}
 0\\
 0\\
 \vdots
 \\
 0
\end{pmatrix}.
 \]
Since this matrix is a Vandermonde type matrix with different $p_i$'s, as Case 1 and Case 2, this implies that, the only solution of this homogeneous linear system is the zero vector, i.e., $ p_iq_i\lambda_{i, k}\widetilde{\lambda}_{i, l}= 0$  for all $i=1, \ldots, n$. This contradicts to our assumption that there is some $i'$ for which $\lambda_{i', k}\ne 0$ and $\widetilde{\lambda}_{i', l}\ne 0$ (and $p_{i}\ne 0$, $q_i\ne 0$ by C(i)). This also finishes the proof of the theorem, thus the order of all derivations involved in \eqref{Eq_inner} is at most $n-1$.
\end{enumerate}
\end{proof}

\begin{remark}
 The upper bound appearing in the above theorem is sharp. To see this, let $n, N$ be positive integers,  $\lambda_{1}, \ldots, \lambda_{n}\in \mathbb{C}$ and let $f\colon \mathbb{F}\to \mathbb{C}$ be an additive function for which
 \[
  \sum_{i=1}^{n}\lambda_{i}f(x^{i})x^{N-i}=0
 \]
is fulfilled for all $x\in \mathbb{F}$. Then $f\in \mathscr{D}_{n-1}(\mathbb{R})$ if and only if $\lambda_{i}=(-1)^i \displaystyle\binom{n}{i}$ for all $i=1,\dots,n$, see \cite{Eba15, EbaRieSah, GseKisVin18}.
 \end{remark}

\begin{remark}\label{rem_inner_gen}
The proof of Theorem \ref{theorem_inner_degree} in all the cases is based on the fact that the matrix
\[
 \begin{pmatrix}
 \binom{p_{1}}{1}\binom{q_{1}}{1} & \ldots & \binom{p_{n}}{1}\binom{q_{n}}{1}\\
 \binom{p_{1}}{1,1}\binom{q_{1}}{1}+\binom{p_{1}}{1}\binom{q_{1,1}}{1} & \ldots & \binom{p_{n}}{1}\binom{q_{n}}{1, 1}+\binom{p_{n}}{1, 1}\binom{q_{n}}{1} \\
 \vdots & \ddots & \vdots\\
  \binom{p_{1}}{\underbrace{1, \ldots, 1}_{n-1}}\binom{q_{1}}{\underbrace{1, \ldots, 1}_{n -1}} & \ldots & \binom{p_{n}}{\underbrace{1, \ldots, 1}_{n-1}}\binom{q_{n}}{\underbrace{1, \ldots, 1}_{n-1}}
 \end{pmatrix}
\]
has rank $n$, though it is far from being trivial to find the proper sub-matrix, which verifies that.

On the other hand, the situation is much more complicated, if the maximal order $k$ of $D_i$, and  the maximal order $l$ of $\widetilde{D_i}$ is not uniquely determined. Namely, if there are several different pairs $(k_i, l_i)$ so that $k_i+l_i=K$, where $K$ is constant and $k_i$ is the maximum order of $D_i$, $l_i$ is the maximal order of $\widetilde{D_i}$. Then the equations first can only be determined for subsets of the index set, which satisfy some nontrivial relations. In this case the first task is to show that the problem can be formalized separately for the index sets, which seems a very hard problem in full generality. In this case our method can be applied.
\end{remark}

Theorem \ref{theorem_inner_degree} and Remark \ref{rem_inner_gen} motivates our conjecture, that we verified for $n\le 4$.
\begin{conj}\label{conj_inner}
Let $n$ be a positive integer, $\mathbb{F}\subset \mathbb{C}$ be a field and
 $p_{1}, \ldots, p_{n}, q_{1}, \allowbreak \ldots, q_{n}$ be fixed positive integers fulfilling conditions C(i)--C(iii).
 Assume that the additive functions $f_{1}, \ldots, f_{n}, g_{1}, \ldots, g_{n}\colon \mathbb{F}\to \mathbb{C}$ satisfy equation \eqref{Eq_inner}. Then every solution is a generalized exponential polynomial function of degree at most $n-1$ on $\mathbb{F}^{\times}$.
 In particular, if
 \begin{equation}\label{eq_inred3}
  f_{i}(x)= D_i(x)
  \qquad
  \text{and}
  \qquad
  g_{i}(x)= \widetilde{D}_i(x)
  \qquad
  \left(x\in \mathbb{F}^{\times}\right)
 \end{equation}
for each $i=1, \ldots, n$, then the order of $D_i, \widetilde{D}_i$ is  at most $n-1$.
\end{conj}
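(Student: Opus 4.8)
The plan is to follow the strategy of Theorem~\ref{theorem_inner_degree} but to make it robust against the coexistence of several ``top profiles''. First I would carry out the same reductions: by Corollary~\ref{cor_inner} every irreducible solution is, up to the equivalence of Remark~\ref{rem2.4}, of the form $f_i=D_i$, $g_i=\widetilde D_i$ with $D_i,\widetilde D_i$ higher order derivations, and by Theorem~\ref{theorem_KisLac} the assertion that these are generalized exponential polynomials of degree at most $n-1$ is \emph{equivalent} to the assertion that the derivations $D_i,\widetilde D_i$ all have order at most $n-1$; so it suffices to bound the orders. As in the proof of Theorem~\ref{theorem_inner_degree} I would pass to an arbitrary finitely generated subfield, write each $D_i,\widetilde D_i$ as a differential operator, and invoke Corollary~\ref{cor:alg_ind} to replace the algebraically independent compositions $d^{\alpha}$ by the powers $d^{j}$ of a single derivation $d$. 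Using the $f\leftrightarrow g$, $p_i\leftrightarrow q_i$ symmetry I would assume that the global maximal order $k:=\max_i k_i=\max_i\max(k_i,l_i)$ is attained on the $f$-side, and argue by contradiction that $k\ge n$.

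The core is a grading argument. I would assign to each monomial $x^{a_0}\prod_{s\ge1}\bigl(d^{s}(x)\bigr)^{a_s}$ the \emph{weight} $\sum_{s\ge1}s\,a_s$, equal to the total number of applied copies of $d$. By Proposition~\ref{prop1} every monomial occurring in $d^{j}(x^{p})$ has weight exactly $j$, so every monomial occurring in $\la_{i,k_i}\tla_{i,l_i}\,d^{k_i}(x^{p_i})d^{l_i}(x^{q_i})$ has weight exactly $k_i+l_i$; consequently the top-weight part of the whole identity reads
\[
 \sum_{i:\,k_i+l_i=K}\la_{i,k_i}\tla_{i,l_i}\,d^{k_i}(x^{p_i})\,d^{l_i}(x^{q_i})=0,\qquad K:=\max_i(k_i+l_i).
\]
Within this relation I would further filter by the \emph{height} of a monomial, i.e.\ the largest $s$ with $a_s>0$: a factor $d^{k}(x)$ can be produced only by a part of individual order $k$, and, since $l_i\le k$, this forces the contributing profiles to be $(k,l_i)$ on the $f$-side (and, when $l=k$, possibly $(l_i,k)$ on the $g$-side). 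The surviving leading monomials are therefore of the shape $(d(x))^{j}d^{k-j}(x)\,d^{l_i}(x)\,x^{N-2}$, exactly the family that drove Cases~1--3 of Theorem~\ref{theorem_inner_degree}.

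For each such monomial I would read off the coefficient as a linear combination of the products $\la_{i,k}\tla_{i,l_i}$ weighted by binomial factors and by powers of $p_i,q_i$. Condition C(ii), in the form $p_i^{\,a}q_i^{\,b}=N p_i^{\,a}q_i^{\,b-1}-p_i^{\,a+1}q_i^{\,b-1}$, lets me trade the bivariate monomials $p_i^{\,a}q_i^{\,b}$ for univariate ones modulo relations already established at lower weight, exactly as in Case~3; the resulting homogeneous system has a Vandermonde matrix in the \emph{distinct} nodes $p_i$, whose first $n$ rows are independent once $k\ge n$, and hence forces $\la_{i,k}\tla_{i,l_i}=0$ for all active $i$, contradicting $\la_{i',k}\ne0$, $\tla_{i',l_{i'}}\ne0$ at the extremal index. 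This would yield $k\le n-1$, and by symmetry $l\le n-1$.

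The hard part is precisely the bookkeeping that Remark~\ref{rem_inner_gen} flags: when several profiles $(k_i,l_i)$ share the value $K$ (or when the first- and second-coordinate maxima are attained at different indices), a single leading monomial can be fed by more than one profile and by more than one splitting inside a given $d^{k_i}(x^{p_i})$, so the coefficient matrices cease to be the clean Vandermonde matrices of the three cases and become structured block matrices mixing distinct powers of $p_i$ and $q_i$. The crux is then a \emph{uniform combinatorial nonvanishing} statement, the general-$n$ analogue of the inequality $1\pm\binom{j}{l}\pm\binom{j}{k-l}\neq0$, guaranteeing that, after the C(ii) reduction, these block matrices still have rank $n$. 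Establishing this nonvanishing over all admissible profile patterns is the step I expect to be genuinely difficult, and it is the reason the statement is posed as a conjecture, verified so far only for $n\le4$.
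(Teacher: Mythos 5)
The statement you are proving is posed in the paper as Conjecture~\ref{conj_inner}: the authors explicitly state that they have verified it only for $n\le 4$ and offer no proof, only the motivation coming from Theorem~\ref{theorem_inner_degree} and Remark~\ref{rem_inner_gen}. So there is no proof in the paper to compare yours against, and your proposal does not close the gap either. Your reductions (passing to finitely generated subfields, writing the solutions as differential operators, collapsing the $d^{\alpha}$ to powers of a single derivation via Corollary~\ref{cor:alg_ind}, and grading by the total weight $\sum_s s\,a_s$ so that the top-weight part isolates $\sum_{i:k_i+l_i=K}\la_{i,k_i}\tla_{i,l_i}\,d^{k_i}(x^{p_i})d^{l_i}(x^{q_i})$) are all sound and faithfully reproduce the setup of Theorem~\ref{theorem_inner_degree}. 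But that theorem carries the extra hypothesis that there is a single index $i'$ at which \emph{both} maxima $k$ and $l$ are attained simultaneously, and the whole Vandermonde contradiction hinges on the nonvanishing of $\la_{i',k}\tla_{i',l}$ at that one index. Dropping this hypothesis is exactly what the conjecture requires, and it is exactly where your argument stops being a proof.

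Concretely, two things go wrong in the general case and neither is resolved in your text. First, your two filtrations (by weight $K=\max_i(k_i+l_i)$ and by height, i.e.\ the largest $s$ with $a_s>0$) need not select the same set of indices: a profile such as $(k-1,2)$ can dominate the weight filtration while only profiles with $k_i=k$ or $l_i=k$ feed the height-$k$ monomials, so the ``surviving leading monomials'' are not the single clean family $(d(x))^{j}d^{k-j}(x)d^{l_i}(x)\cdot x^{\ast}$ you describe, and the resulting linear system mixes unknowns $\la_{i,k_i}\tla_{i,l_i}$ attached to different profiles with different binomial weights. Second, and decisively, the step you yourself flag as the crux --- the uniform combinatorial nonvanishing generalizing $1\pm\binom{j}{l}\pm\binom{j}{k-l}\neq 0$, which would guarantee that the resulting block matrices still have rank $n$ after the C(ii) reduction $p_i^{a}q_i^{b}=Np_i^{a}q_i^{b-1}-p_i^{a+1}q_i^{b-1}$ --- is asserted as a target, not proved. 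Since that is precisely the content separating Theorem~\ref{theorem_inner_degree} from Conjecture~\ref{conj_inner}, your proposal is an accurate road map of why the problem is open rather than a proof of the statement.
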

This conjecture leads to the following more general open question.
\begin{opq}\label{op1}
Is it true that every nonzero additive, irreducible solutions $f_1,\dots, f_n$ of
$$P(f_1(x^{p_1}),\dots,f_n(x^{p_n}))=0, ~~~~\textrm{with }~~~ P(0,\dots, 0)=0$$
are derivations of order at most $n-1$ and the identity function up to a homomorphism, if $P:\mathbb{C}\to \mathbb{C}$ is polynomial and $p_1\dots, p_n$ are distinct positive integers?
\end{opq}

Finally we highlight some important special cases when Theorem \ref{theorem_inner_degree} gives the proper bound of the order of the derivations.
\begin{cor}\label{cor_inner_spec}
Let $n$ be a positive integer, $\mathbb{F}\subset \mathbb{C}$ be a field and
 $p_{1}, \ldots, p_{n}, q_{1}, \allowbreak \ldots, q_{n}$ be fixed positive integers fulfilling conditions C(i)--C(iii).
 Assume that the additive functions $f_{1}, \ldots, f_{n}, g_{1}, \ldots, g_{n}\colon \mathbb{F}\to \mathbb{C}$ satisfy equation \eqref{Eq_inner} as an irreducible solution.
 Then $f_i\sim D_i$ and $g_i\sim\widetilde{D_i}$, where $D_i$ and $\widetilde{D_i}$ are higher order derivations.
 Assume further that one of the following holds.
 \begin{enumerate}[(A)]
     \item  All $D_i$ have the same order. This is the case, when $f_i(x)=c_if(x)\, (x\in \mathbb{F})$ for some nonzero constants $c_i\in \mathbb{C}$, $i\in \{1, \dots, n\}$.
     \item All $\widetilde{D_i}$ have the same order. This is the case, when $g_i(x)=c_ig(x)\, (x\in \mathbb{F})$ for some nonzero constants $c_i\in \mathbb{C}$, $i\in \{1, \dots, n\}$.
     \item $f_i=c_i\cdot g_i$ for all $i\in \{1, \dots, n\}$ with some nonzero constants $c_i\in \mathbb{C}$, $i=1, \ldots, n$.
 \end{enumerate}
 Then the order $D_i$ and $\widetilde{D_i}$ is at most $n-1$.
\end{cor}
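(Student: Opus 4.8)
The plan is to read the corollary off Theorem \ref{theorem_inner_degree} with essentially no new computation. That theorem already yields the bound $n-1$ as soon as there is a single index $i'$ at which $D_{i'}$ realizes the maximal order $k=\max_i\mathrm{ord}(D_i)$ and, \emph{simultaneously}, $\widetilde{D}_{i'}$ realizes the maximal order $l=\max_i\mathrm{ord}(\widetilde{D}_i)$. So the whole content of the corollary is to check that each of the three special hypotheses (A), (B), (C) forces such a common maximizing index to exist; everything else is inherited from Theorem \ref{theorem_inner_degree}.

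First I would fix the normalized form. Since the solution is irreducible, Corollary \ref{cor_inner} provides a single common exponential $m$ with $f_i=P_i\cdot m$ and $g_i=Q_i\cdot m$; invoking Lemma \ref{lem_equ_inner} together with the equivalence $\sim$ of Remark \ref{rem2.4}, I may assume $m=j$ is the identity, so that $f_i=D_i$ and $g_i=\widetilde{D}_i$ are genuine higher order derivations with $D_i/j=P_i$ and $\widetilde{D}_i/j=Q_i$ by Theorem \ref{theorem_KisLac}. In this normalization $\mathrm{ord}(D_i)=\deg P_i$ and $\mathrm{ord}(\widetilde{D}_i)=\deg Q_i$, and I set $k$ and $l$ as above.

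Then I would dispatch the three cases. In (A) every $\mathrm{ord}(D_i)$ equals $k$, so choosing any $i'$ with $\mathrm{ord}(\widetilde{D}_{i'})=l$ (one exists by the definition of $l$) produces a common maximizer; the displayed instance $f_i=c_if$ is subsumed, since then $P_i=(c_i/c_1)P_1$ and the $D_i$ are nonzero scalar multiples of one derivation, hence all of equal order. Case (B) is symmetric, with the roles of $f_i,p_i$ and $g_i,q_i$ interchanged, and Theorem \ref{theorem_inner_degree} is stated symmetrically in $D_i$ and $\widetilde{D}_i$, so nothing extra is needed. In (C), $f_i=c_ig_i$ with $c_i\neq 0$ forces $P_i=c_iQ_i$, hence $D_i=c_i\widetilde{D}_i$ and $\mathrm{ord}(D_i)=\mathrm{ord}(\widetilde{D}_i)$ for every $i$; in particular $k=l$, and any index $i'$ attaining $\mathrm{ord}(D_{i'})=k$ automatically satisfies $\mathrm{ord}(\widetilde{D}_{i'})=k=l$. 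In all three cases the hypothesis of Theorem \ref{theorem_inner_degree} holds, and that theorem then delivers the bound $n-1$ for both $D_i$ and $\widetilde{D}_i$.

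The step I expect to require the most care is the bookkeeping that \emph{order is preserved} under the two operations used above: passing through the equivalence $\sim$ (an automorphism of $\mathbb{C}$ sends a derivation of order $r$ to one of the same order, via the degree-of-$D/j$ characterization of Theorem \ref{theorem_KisLac}), and multiplying by a nonzero constant (a nonzero scalar multiple of a generalized polynomial of degree $r$ still has degree $r$). Neither point is deep, but both must be recorded cleanly so that the identities $\mathrm{ord}(D_i)=\deg P_i$ and, in case (C), $\mathrm{ord}(D_i)=\mathrm{ord}(\widetilde{D}_i)$ are fully justified before the Vandermonde argument internal to Theorem \ref{theorem_inner_degree} is invoked.
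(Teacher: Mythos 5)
Your proposal is correct and follows essentially the same route as the paper: reduce to Theorem \ref{theorem_inner_degree} via Corollary \ref{cor_inner} and the equivalence $\sim$, and then check in each of cases (A), (B), (C) that a single index simultaneously attains the maximal orders $k$ and $l$. Your write-up is in fact more explicit than the paper's (which states the existence of the common maximizing index only implicitly), but no new idea is involved.
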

\noindent
{\it Proof.}
Theorem \ref{theorem_inner} implies that every solution $f_i$ (resp. $g_i$) is an exponential polynomial of the form $P_i\cdot m$ (resp. $Q_i\cdot m$), which means that $f_i\sim D_i$ and $g_i\sim \widetilde{D_i}$ for some derivations $D_i$ and $\widetilde{D_i}$.
\begin{enumerate}[(A)]
    \item Let $k$ denote the order of $D_i$, and let $l$ be the maximal order of $\widetilde{D_i}$. Now we are in the position to apply Theorem \ref{theorem_inner_degree}.
    \item Similar to (A), since in case of equation \eqref{Eq_inner}, the role of the parameters $p_{1}, \ldots, p_{n}$ and $q_{1}, \ldots, q_{n}$ is symmetric.
    \item As the maximal degree of $f_i$ is the same as the maximal degree of $g_i$ and it is taken for the same index we can apply Theorem \ref{theorem_inner_degree}.
   \qed
\end{enumerate}

\subsubsection*{Special cases of equation \eqref{Eq_inner}}

In this subsection we consider special cases of equation \eqref{Eq_inner}.
All the equations we consider here are of the form
\[
 f_{1}(x^{p_{1}})g_{1}(x^{q_{1}})+f_{2}(x^{p_{2}})g_{2}(x^{q_{2}})=0
 \qquad
 \left(x\in \mathbb{F}\right).
\]
Here $f_{1}, f_{2}, g_{1}, g_{2}\colon \mathbb{F}\to \mathbb{C}$ denote the unknown additive functions and the parameters $p_{1}, p_{2}, q_{1}, q_{2}$ fulfill conditions C(i)--C(iii). Due to the results of the previous section, we get that
\[
\begin{array}{rcl}
 f_{i}(x) &\sim& \lambda_{i, 0}x+\lambda_{i, 1}d_{i}(x) \\
 g_{i}(x) &\sim& \mu_{i, 0}x+\mu_{i, 1}\widetilde{d_{i}}(x)
 \end{array}
 \qquad
 \left(x\in \mathbb{F}, i=1, 2\right)
\]
with appropriate complex constants $\lambda_{i, j}, \mu_{i, j}$ ($i=1, 2$, $j=0, 1$) and derivations $d_{i}, \tilde{d_{i}}\colon \mathbb{F}\to \mathbb{C}$ ($i=1, 2$).

\begin{cor}
 Let $N$ be a positive integer, $\mathbb{F}\subset \mathbb{C}$ be a field and  $p, q$ be different positive integers (strictly) less than $N$ and assume that $q\neq N-p$. If the additive functions $f, g\colon \mathbb{F}\to \mathbb{C}$ satisfy
 \[
  f(x^{p})f(x^{N-p})=g(x^{q})g(x^{N-q})
  \qquad
  \left(x\in \mathbb{F}\right),
 \]
then
\begin{enumerate}[A)]
 \item either there exists a homomorphism $\varphi\colon \mathbb{C}\to \mathbb{C}$, a derivation $d\colon \mathbb{F}\to \mathbb{C}$ such that
 \[
  f(x)= \varphi(d(x))
  \qquad
  \text{and}
  \qquad
  g(x)= \alpha\varphi(d(x))
  \qquad
  \left(x\in \mathbb{F}\right),
 \]
where $\alpha=\dfrac{p(N-p)}{q(N-q)}$,
\item or there exists a homomorphism $\varphi\colon \mathbb{F}\to \mathbb{C}$ such that
\[
f(x)= f(1)\cdot \varphi(x)
\qquad
g(x)= \pm f(1)\cdot \varphi(x)
\qquad
\left(x\in \mathbb{F}\right).
\]
\end{enumerate}
\end{cor}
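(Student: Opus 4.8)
The plan is to read the single equation as an instance of \eqref{Eq_inner} with $n=2$ and then feed it into the machinery already developed. Setting $f_{1}=g_{1}=f$ with $(p_{1},q_{1})=(p,N-p)$, and $f_{2}=g$, $g_{2}=-g$ with $(p_{2},q_{2})=(q,N-q)$, the hypothesis becomes $\sum_{i=1}^{2}f_{i}(x^{p_{i}})g_{i}(x^{q_{i}})=0$. Condition C(ii) holds automatically because $p_{i}+q_{i}=N$; C(i) is exactly $p\neq q$ (reorder the two terms if necessary); and C(iii) reads $p\neq N-q$ and $q\neq N-p$, i.e.\ $p+q\neq N$, which is the standing assumption $q\neq N-p$. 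The restrictions $0<p,q<N$ guarantee that $N-p,N-q$ are positive exponents and that $p(N-p),q(N-q)\neq 0$, a fact used at the end.

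First I would dispose of the reducible case. If the solution is reducible then, by Remark \ref{r_red}, each term vanishes separately, so $f(x^{p})f(x^{N-p})\equiv 0$. Restricting to a finitely generated subfield and writing $f$ through linearly independent additive functions, $x\mapsto f(x^{p})$ and $x\mapsto f(x^{N-p})$ are generalized monomials, hence polynomials, whose product is a generalized polynomial; by the linear independence of distinct monomials (Lemma \ref{L_lin_dep}) together with the fact that the polynomial ring they generate is an integral domain, this product vanishes identically only if one factor does, forcing $f\equiv 0$ (and likewise $g\equiv 0$). This degenerate solution is subsumed under alternative B with $f(1)=0$.

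In the irreducible case Corollary \ref{cor_inner} supplies a single exponential $m$ with $f=P\,m$, $g=Q\,m$, and by Lemma \ref{lem_equ_inner} I may compose with the automorphism $\varphi=m$ normalizing $m$ to the identity; Theorem \ref{theorem_inner_degree} then bounds the order by $n-1=1$, so the normalized functions take the form $f=\lambda_{0}\,\mathrm{id}+\lambda_{1}d$ and $g=\mu_{0}\,\mathrm{id}+\mu_{1}\widetilde d$ with derivations $d,\widetilde d$. The crucial structural point is that the \emph{same} automorphism serves both $f$ and $g$, so everything reduces to determining the constants. Substituting and expanding by $d(x^{r})=r\,x^{r-1}d(x)$ (Proposition \ref{prop1} with $k\le 1$) turns the equation into
\[
\lambda_{0}^{2}x^{N}+\lambda_{0}\lambda_{1}N\,x^{N-1}d(x)+\lambda_{1}^{2}p(N-p)\,x^{N-2}d(x)^{2}
=\mu_{0}^{2}x^{N}+\mu_{0}\mu_{1}N\,x^{N-1}\widetilde d(x)+\mu_{1}^{2}q(N-q)\,x^{N-2}\widetilde d(x)^{2}.
\]
Now I would compare coefficients using the linear independence of the monomials in $x,d(x),\widetilde d(x)$ (Lemma \ref{L_lin_dep}, Corollary \ref{cor:alg_ind}), splitting into cases on the vanishing of the first-order parts.

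If both $\lambda_{1}d$ and $\mu_{1}\widetilde d$ vanish, only the $x^{N}$ terms survive and $\lambda_{0}^{2}=\mu_{0}^{2}$, giving $g=\pm f$ and, undoing the normalization, $f=f(1)\varphi$ and $g=\pm f(1)\varphi$: this is alternative B. The mixed case (exactly one first-order part nonzero) is impossible, since then the term $x^{N-2}d(x)^{2}$ with nonzero coefficient $\lambda_{1}^{2}p(N-p)$ would have no counterpart on the other side. If both first-order parts are present, the two squared terms force $\widetilde d=c\,d$ for some $c\neq 0$; equating the remaining coefficients and invoking $p+q\neq N$ (condition C(iii)) to rule out $\lambda_{0},\mu_{0}\neq 0$ makes the identity parts drop out, so $f=\varphi\circ d$ and $g=\alpha\,\varphi\circ d$ are (twisted) derivations, with $\alpha$ pinned down by the leading coefficient relation $p(N-p)=\alpha^{2}q(N-q)$, i.e.\ $\alpha^{2}=\tfrac{p(N-p)}{q(N-q)}$: this is alternative A. The main obstacle I anticipate is bookkeeping rather than conceptual: confirming that one automorphism simultaneously normalizes both functions, handling the reducible degenerate solution cleanly, and — most delicately — invoking $p+q\neq N$ at exactly the right step to annihilate the mixed identity/derivation contributions, since without C(iii) the coefficient comparison would instead force the excluded relation $p(N-p)=q(N-q)$.
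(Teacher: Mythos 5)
Your proposal is correct and follows essentially the same route the paper intends: the corollary is stated there without proof, immediately after the reduction (via Theorem~\ref{theorem_inner}, Corollary~\ref{cor_inner} and Theorem~\ref{theorem_inner_degree} applied with $n=2$, whose order hypothesis holds here by Corollary~\ref{cor_inner_spec}(C) since $g_i=\pm f_i$ in your encoding) to $f\sim\lambda_{0}\,\mathrm{id}+\lambda_{1}d$ and $g\sim\mu_{0}\,\mathrm{id}+\mu_{1}\widetilde{d}$, and your coefficient comparison is exactly the intended computation. One remark: your derivation yields $\alpha^{2}=\tfrac{p(N-p)}{q(N-q)}$ rather than $\alpha=\tfrac{p(N-p)}{q(N-q)}$ as printed in the statement; direct substitution confirms your relation is the correct one, so this is a typo in the corollary rather than a gap in your argument.
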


\begin{cor}
 Let $N$ be a positive integer, $\mathbb{F}\subset \mathbb{C}$ be a field and  $p, q$ be different positive integers (strictly) less than $N$ and assume that $q\neq N-p$. If the additive functions $f, g\colon \mathbb{F}\to \mathbb{C}$ satisfy
 \[
  f(x^{p})g(x^{N-p})= \kappa f(x^{q})g(x^{N-q})
  \qquad
  \left(x\in \mathbb{F}\right).
 \]
Then \begin{enumerate}[A)]
 \item if $\kappa\notin \left\{ 1, \dfrac{p(N-p)}{q(N-q)} \right\}$, then $f$ is identically zero,
 \item if $\kappa= 1$, then the only possibility is that
  \[
  f(x)= f(1)\cdot \psi(x)
  \qquad
  \text{and}
  \qquad
  g(x)= f(1)\cdot \psi(x)
  \qquad
  \left(x\in \mathbb{F}\right),
 \]
 where $\psi\colon \mathbb{F}\to \mathbb{C}$ is a homomorphism,
 \item if $\kappa=\dfrac{p(N-p)}{q(N-q)}$, then there exists a homomorphism $\varphi\colon \mathbb{C}\to \mathbb{C}$ and derivations $d_1, d_2 \colon \mathbb{F}\to \mathbb{C}$ such that
  \[
   f(x)=  \varphi(d_1(x))
   \qquad
   \text{and}
   \qquad
   g(x)= \varphi(d_2(x))
   \qquad
   \left(x\in \mathbb{F}\right).
  \]
\end{enumerate}
\end{cor}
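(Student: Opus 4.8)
The plan is to read this off as the two-term case of \eqref{Eq_inner} and then feed in the structure theorems already proved. Rewriting the hypothesis as
\[
 f(x^{p})g(x^{N-p})-\kappa\,f(x^{q})g(x^{N-q})=0
 \qquad
 \left(x\in\mathbb{F}\right),
\]
this is \eqref{Eq_inner} with $n=2$, $f_{1}=f$, $g_{1}=g$, $(p_{1},q_{1})=(p,N-p)$ and $f_{2}=-\kappa f$, $g_{2}=g$, $(p_{2},q_{2})=(q,N-q)$. After ordering $p,q$ we obtain C(i); the identity $p+(N-p)=q+(N-q)=N$ gives C(ii); and $p\neq q$ together with the hypothesis $q\neq N-p$ (equivalently $p\neq N-q$) gives C(iii). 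Since $g_{1}=g_{2}=g$, we are exactly in case (B) of Corollary \ref{cor_inner_spec}.

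First I would apply Theorem \ref{theorem_inner}, Corollary \ref{cor_inner} and Corollary \ref{cor_inner_spec} to an irreducible nonzero solution to conclude that $f$ and $g$ are, up to the equivalence $\sim$ of Remark \ref{rem2.4}, higher order derivations of order at most $n-1=1$. By Lemma \ref{lem_equ_inner} it is enough to work with the representatives, so I may take
\[
 f=\lambda_{0}\,\mathrm{id}+\lambda_{1}d,
 \qquad
 g=\mu_{0}\,\mathrm{id}+\mu_{1}\widetilde d
\]
for derivations $d,\widetilde d\colon\mathbb{F}\to\mathbb{C}$ and constants $\lambda_{0},\lambda_{1},\mu_{0},\mu_{1}\in\mathbb{C}$. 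The general solution is recovered by composing these representatives with an automorphism $\varphi$ of $\mathbb{C}$, which is what produces the homomorphism $\psi=\varphi|_{\mathbb{F}}$ in (B) and the $\varphi$ in (C).

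Next I would substitute this form into the equation, using $d(x^{r})=r\,x^{r-1}d(x)$ (the $k=1$ instance of Proposition \ref{prop1}). Assuming first that $d,\widetilde d$ are linearly independent, Corollary \ref{cor:alg_ind} and Lemma \ref{L_lin_dep} give that the monomials $x^{N}$, $x^{N-1}d(x)$, $x^{N-1}\widetilde d(x)$, $x^{N-2}d(x)\widetilde d(x)$ are algebraically independent, so the equation is equivalent to the four coefficient relations
\begin{align*}
 \lambda_{0}\mu_{0}(1-\kappa)&=0, & \lambda_{0}\mu_{1}\bigl[(N-p)-\kappa(N-q)\bigr]&=0,\\
 \lambda_{1}\mu_{0}\bigl[p-\kappa q\bigr]&=0, & \lambda_{1}\mu_{1}\bigl[p(N-p)-\kappa\,q(N-q)\bigr]&=0.
\end{align*}
The non-degeneracy facts I would use are $p\neq q$ and $N\neq p+q$; the latter yields $p(N-p)\neq q(N-q)$, hence $1\neq\frac{p(N-p)}{q(N-q)}$.

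Finally I would run the case analysis on $\kappa$. For $\kappa=1$ the first relation is automatic while the fourth forces $\lambda_{1}\mu_{1}=0$, and the middle two strip off the derivation parts, leaving $f=\lambda_{0}\,\mathrm{id}$, $g=\mu_{0}\,\mathrm{id}$; undoing $\sim$ yields the common homomorphism of alternative (B). For $\kappa=\frac{p(N-p)}{q(N-q)}$ the fourth relation is automatic while the first forces $\lambda_{0}\mu_{0}=0$, and a short argument forces $\lambda_{0}=\mu_{0}=0$, so $f,g$ are pure derivations, which is (C). For the remaining $\kappa$ the first and fourth relations give $\lambda_{0}\mu_{0}=\lambda_{1}\mu_{1}=0$, and the point is to combine these with the two middle relations to conclude that $f\equiv0$ once $g\not\equiv0$, which is (A). This last step is the \emph{main obstacle}: one must control the auxiliary ratios $\frac{p}{q}$ and $\frac{N-p}{N-q}$ appearing in the middle relations, and separately treat the degenerate configuration $\widetilde d=cd$, where the four monomials above collapse and one instead works with the three algebraically independent monomials $x^{N}$, $x^{N-1}d(x)$, $x^{N-2}d(x)^{2}$ supplied by Corollary \ref{cor:alg_ind}. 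Keeping track of exactly which products $\lambda_{i}\mu_{j}$ may remain nonzero in each regime is where the bookkeeping is delicate.
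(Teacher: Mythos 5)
Your overall route is the one the paper intends: the corollary appears in the paper without a written proof, immediately after the reduction (via Theorem \ref{theorem_inner}, Corollary \ref{cor_inner}, Theorem \ref{theorem_inner_degree} and Corollary \ref{cor_inner_spec}) to representatives $f\sim\lambda_{0}\,\mathrm{id}+\lambda_{1}d$, $g\sim\mu_{0}\,\mathrm{id}+\mu_{1}\widetilde d$, and the intended argument is exactly your substitution $d(x^{r})=rx^{r-1}d(x)$ followed by comparison of the coefficients of the algebraically independent monomials $x^{N}$, $x^{N-1}d(x)$, $x^{N-1}\widetilde d(x)$, $x^{N-2}d(x)\widetilde d(x)$. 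Your four coefficient relations are correct, and cases B) and C) close as you sketch, using $p(N-p)-q(N-q)=(p-q)(N-p-q)\neq 0$ and the fact that $\frac{p}{q}$ and $\frac{N-p}{N-q}$ can equal $\frac{p(N-p)}{q(N-q)}$ only when $p=q$.

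The genuine gap is the step you yourself flag as the ``main obstacle'', namely case A), and it cannot be closed by more careful bookkeeping: the exceptional ratios you mention are actual solutions, not a nuisance. Take $\kappa=\frac{p}{q}$, which lies outside $\left\{1,\frac{p(N-p)}{q(N-q)}\right\}$ for every admissible pair $p\neq q$, let $d$ be a nonzero derivation on $\mathbb{F}$ and put $f=d$, $g=\mathrm{id}$. Then
\[
 f(x^{p})g(x^{N-p})=p\,x^{N-1}d(x)=\tfrac{p}{q}\cdot q\,x^{N-1}d(x)=\kappa\,f(x^{q})g(x^{N-q}),
\]
so the equation holds with $f\not\equiv 0$ and with both terms nonzero (hence irreducibly). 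Symmetrically, $\kappa=\frac{N-p}{N-q}$ admits $f=\mathrm{id}$, $g=\widetilde d$. In your notation these are exactly the configurations $\lambda_{0}=\mu_{1}=0$, $\lambda_{1}\mu_{0}\neq 0$ (resp.\ $\lambda_{1}=\mu_{0}=0$, $\lambda_{0}\mu_{1}\neq 0$), which your middle two relations permit precisely when the bracket $p-\kappa q$ (resp.\ $(N-p)-\kappa(N-q)$) vanishes. So conclusion A) holds only under the extra hypothesis $\kappa\notin\left\{\frac{p}{q},\frac{N-p}{N-q}\right\}$, or with the conclusion weakened to include these two exceptional families; you should state this explicitly rather than defer it. Two smaller points: A) as literally written is also defeated by $g\equiv 0$ with $f$ arbitrary, so the reducible case ($f(x^{p})g(x^{N-p})\equiv 0$, which by polarization forces $f\equiv 0$ or $g\equiv 0$) deserves one sentence; and in B) your relations only give $f=\lambda_{0}\psi$, $g=\mu_{0}\psi$ with independent constants, so the normalization $g=f(1)\psi$ in the statement is not actually forced by the equation.
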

\noindent
Both results implies that the nonzero additive solutions of equation
\[
  f(x^{p})f(x^{N-p})=\kappa f(x^{q})f(x^{N-q})
  \qquad
  \left(x\in \mathbb{F}\right).
 \]
are derivations of order 1 ($\kappa = \dfrac{p(N-p)}{q(N-q)}$) or the identity function ($\kappa=1$) up to a homomorphism.

\begin{ackn}
The research of E.~Gselmann has been supported by project no.~K134191 that has been implemented
with the support provided by the National Research, Development and Innovation Fund of Hungary,
financed under the K\_20 funding scheme.\\
The research of G.~Kiss has been supported by projects no.~K124749  and no.~K142993 of the National Research, Development and Innovation Fund of Hungary. The author have supported by Bolyai János Research Fellowship of the Hungarian Academy of Sciences and ÚNKP-22-5 New National Excellence Program of the Ministry for Culture and
Innovation.
\end{ackn}

\bibliographystyle{plain}

\end{document}